\documentclass[letterpaper,11pt]{amsart}

\usepackage[T1]{fontenc}
\usepackage[utf8]{inputenc}
\usepackage{lmodern}
\usepackage[english]{babel}
\usepackage[autostyle]{csquotes}

\usepackage[pdfusetitle,linktocpage,colorlinks=false]{hyperref}
\usepackage{doi}
\usepackage{filecontents}
\usepackage{enumitem}

\usepackage{amsmath,amssymb,amsthm,amscd}
\usepackage{pictexwd,dcpic}
\newtheorem{thm}{Theorem}[section]
\newtheorem{prop}[thm]{Proposition}
\newtheorem{lem}[thm]{Lemma}

\newtheorem{defn}[thm]{Definition}

\newtheorem{qn}[thm]{Question}
\theoremstyle{remark}
\newtheorem{rem}[thm]{Remark}

\usepackage[colorinlistoftodos]{todonotes}

\newcommand{\supp}{\mathrm{supp}}

\AtEndDocument{\bigskip{\footnotesize%
  \textsc{Institute of Mathematics of the Polish Academy of Sciences, ul. \'{S}niadeckich 8, 00-656 Warszawa, Poland} \par  
  \textit{E-mail address}: \texttt{ychung@impan.pl} \par
}}

\begin{document}
\title[Dynamical Complexity and $L^p$ Operator Crossed Products]{Dynamical Complexity and $K$-Theory of $L^p$ Operator Crossed Products}
\author{Yeong Chyuan Chung}
\date{\today}
\thanks{The author was supported by the European Research Council (ERC-677120).}
\subjclass[2010]{19K56, 46L80, 46L85 47L10, 37B05}
\keywords{Dynamical complexity, $L^p$ operator crossed product, Quantitative operator $K$-theory, Baum-Connes conjecture}
\maketitle

\begin{abstract}
We apply quantitative (or controlled) $K$-theory to prove that a certain $L^p$ assembly map is an isomorphism for $p\in[1,\infty)$ when an action of a countable discrete group $\Gamma$ on a compact Hausdorff space $X$ has finite dynamical complexity. When $p=2$, this is a model for the Baum-Connes assembly map for $\Gamma$ with coefficients in $C(X)$, and was shown to be an isomorphism by Guentner, Willett, and Yu. 
\end{abstract}

\tableofcontents

\section{Introduction}

Notions of dimension abound in mathematics, and they give us quantitative measures of the sizes of various mathematical objects in a broad sense. In some instances, one wishes to know the exact dimension while in other instances, one just wishes to determine finiteness of the dimension. Finiteness of various dimensions has been considered in connection with central problems in the theory of $C^*$-algebras and in noncommutative geometry. For instance, finiteness of nuclear dimension (a noncommutative analog of covering dimension introduced in \cite{WinZac}) plays a crucial role in the classification of $C^*$-algebras (e.g. \cite{Ell,EGLN,EGLN17,Tik,TWW} and references therein), while finiteness of asymptotic dimension (a coarse geometric analog of covering dimension introduced in \cite{Grom}) has featured in work on the Baum-Connes conjecture and the Novikov conjecture (e.g. \cite{CG,CFY,Yu98}). In this paper, we will consider the notion of dynamical complexity and the implication of its finiteness on an $L^p$ assembly map, which is a model for the Baum-Connes assembly map with coefficients when $p=2$.

Dynamical complexity is a property of topological dynamical systems (and more generally, for \'{e}tale groupoids) introduced by Guentner, Willett, and Yu in \cite{GWY2}. Its definition was inspired by the notion of dynamic asymptotic dimension introduced in \cite{GWY1} and by the notion of decomposition complexity introduced in \cite{GTY}. Given an action of a countable discrete group $\Gamma$ on a compact Hausdorff space $X$, the action is said to have finite dynamical complexity if the transformation groupoid $\Gamma\ltimes X$ is contained in the smallest class of open subgroupoids of $\Gamma\ltimes X$ that contains all relatively compact open subgroupoids and is closed under decomposability. Here, decomposability of an open subgroupoid $G$ over a collection $\mathcal{C}$ of open subgroupoids roughly means that at any given scale, there is a cover of the unit space of $G$ by two open sets such that the subgroupoids associated to the two open sets at that scale are both in $\mathcal{C}$. We refer the reader to Definition \ref{FDCbasic} for the precise definition and to \cite[Definition A.4]{GWY2} for a definition applicable to general \'{e}tale groupoids.

The authors of \cite{GWY2} considered a model for the Baum-Connes assembly map for an action based on equivariant versions of Yu's localization algebras (introduced in \cite{Yu97}) and Roe algebras (introduced in \cite{Roe88,Roe93}). In the appendix of \cite{GWY2}, the authors showed that this model for the Baum-Connes assembly map is equivalent to the more traditional one stated in terms of Kasparov's $KK$-theory \cite{BCH}. Their main result is the following:

\begin{thm}\cite{GWY2}
Suppose an action of a countable discrete group $\Gamma$ on a compact Hausdorff space $X$ has finite dynamical complexity. Then $\Gamma$ satisfies the Baum-Connes conjecture with coefficients in $C(X)$.
\end{thm}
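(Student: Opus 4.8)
The plan is to identify the $L^p$ assembly map with the map on ($L^p$ operator) $K$-theory induced by the evaluation homomorphism from an equivariant $L^p$ localization algebra $C^p_L(\Gamma\ltimes X)$ into an equivariant $L^p$ Roe algebra $C^p(\Gamma\ltimes X)$ — the former built from uniformly continuous paths of locally compact, finite-propagation operators whose propagation tends to $0$, the latter from locally compact finite-propagation operators, both acting on a suitable $L^p$ space carrying an ample action of $\Gamma\ltimes X$. Granting this identification (which for $p=2$ is the content of the appendix of \cite{GWY2}), it suffices to prove that this evaluation homomorphism induces an isomorphism on $K$-theory; for $p=2$ this is exactly the Baum--Connes conjecture with coefficients in $C(X)$.

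The proof is an induction on dynamical complexity carried out entirely within quantitative (controlled) $K$-theory. For an open subgroupoid $G\subseteq\Gamma\ltimes X$ one has controlled $K$-groups $K^{\varepsilon,r}_*$ built from the subspaces of operators of propagation at most $r$, and the evaluation homomorphism induces controlled maps between these. Call $G$ \emph{good} if this controlled evaluation map is a controlled isomorphism with rescaling and constant-degradation functions that do not depend on $G$ (only on $p$ and the ambient data). The base case is that of a relatively compact open subgroupoid $G$: the associated Roe and localization algebras are, up to stabilization, built from $L^p$-compact operators, and the localization direction can be contracted by an explicit homotopy of controlled maps, so every relatively compact open subgroupoid is good.

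The second step is to show that the class of good subgroupoids is closed under decomposability. If $G$ is decomposable over a class $\mathcal{C}$ of good subgroupoids, then at any prescribed scale $r$ there is an open cover $G^{(0)}=U_0\cup U_1$ of the unit space for which $G|_{U_0}$, $G|_{U_1}$ and $G|_{U_0\cap U_1}$ all belong to $\mathcal{C}$ at that scale. One checks that such a decomposition of the unit space induces, at each propagation scale, a controlled Mayer--Vietoris decomposition of the propagation-$\le r$ parts of the Roe and localization algebras of $G$ in terms of the corresponding algebras for $U_0$, $U_1$ and $U_0\cap U_1$, compatibly in the two pictures and with the evaluation maps. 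This yields controlled six-term Mayer--Vietoris sequences, and a controlled five-lemma then promotes goodness of $U_0$, $U_1$, $U_0\cap U_1$ to goodness of $G$ at the price of a uniformly bounded degradation of constants. Since $\Gamma\ltimes X$ lies in the smallest class of open subgroupoids containing the relatively compact ones and closed under decomposability, finitely many iterations place $\Gamma\ltimes X$ itself in the class of good subgroupoids; and a controlled isomorphism at every scale assembles — by the standard passage from controlled to ordinary $K$-theory — to an isomorphism of the evaluation homomorphism on $K$-theory, completing the proof.

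The step I expect to be the main obstacle is the controlled Mayer--Vietoris machinery together with the accompanying quantifier bookkeeping. One must verify that an open decomposition of the unit space genuinely yields compatible controlled ideal/quotient (pullback) decompositions of the $L^p$ Roe and localization algebras at every scale, that all the connecting maps in the two controlled six-term sequences match up under evaluation, and — crucially — that the degradation of quantitative constants incurred at each decomposition can be absorbed so that a single choice of control data survives the entire finite induction. The $L^p$ framework adds the complication that these algebras are not $C^*$-algebras: there is no involution and only holomorphic functional calculus, so the idempotent/invertible formulation of quantitative $K$-theory, the relevant holomorphically closed dense subalgebras, and the perturbation estimates underlying the controlled maps all have to be established without recourse to $C^*$-techniques.
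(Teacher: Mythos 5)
Your skeleton (identify the assembly map with an evaluation map, treat relatively compact subgroupoids as the base case, and run a controlled Mayer--Vietoris induction over decomposability) is the right one, but the way you set up the induction has a genuine gap. You define ``good'' to mean that the controlled evaluation map is a controlled isomorphism with control data \emph{independent of} $G$, you concede a ``uniformly bounded degradation of constants'' at each decomposition step, and you then invoke the minimality of the class $\mathcal{D}$ to conclude after ``finitely many iterations.'' That argument is not available here: to use minimality you need a property that is \emph{literally} closed under decomposability, and a property that degrades each time it is applied is not closed; moreover finite dynamical complexity, unlike a bound on dynamic asymptotic dimension, gives no bound whatsoever on the depth of decompositions (the cover in the definition depends on the scale $r$, a different pair of subgroupoids for each $r$, so membership in $\mathcal{D}$ is in general only established by an unbounded, a priori even transfinite, iteration). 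For the same reason you cannot get a single controlled Mayer--Vietoris isomorphism ``at every scale'' for $G$: the decomposition must be chosen \emph{after} the $K$-theory class, i.e.\ one first represents the class quantitatively with some propagation $r_0$ and then decomposes at a scale determined by $r_0$ and by the control pair of the boundary map. Finally, decomposability does not directly place $G|_{U_0\cap U_1}$ in the class; one needs the strong form with expansions $G_i^{+r}$ together with heredity of the class under passing to open subgroupoids (Lemma \ref{FDCstrong}).

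The paper's proof is organized precisely to avoid these issues. It works with the obstruction algebra $B^p_{L,0}$ (the kernel of evaluation at zero) rather than with a controlled five lemma comparing two six-term sequences, and reduces the theorem to $\lim_{s\to\infty}K_*(B^p_{L,0}(\Gamma\curvearrowright X;s))=0$ (Lemma \ref{Lem:obsvan}). The inductive class $\mathcal{C}$ is defined \emph{non-uniformly}: $G\in\mathcal{C}$ iff for every open subgroupoid $H\subseteq G$, every $s_0$ and every $x\in K_*(A^{s_0}(H))$ there exists $s\geq s_0$ killing $x$ in $K_*(A^s(H))$. This statement carries no constants to preserve, is hereditary, contains the relatively compact subgroupoids because $K_*(A^s(G))=0$ once $s$ dominates all lengths occurring in $G$ (Proposition \ref{baseprop}, proved by a homotopy along the Rips coordinates plus Eilenberg-swindle arguments, not by a reduction to compact operators), and is closed under strong decomposability by one application of the controlled Mayer--Vietoris theorem (Theorem \ref{MVthm}), with all quantitative losses absorbed by increasing the Rips parameter $s$ and by Propositions \ref{qKtoKsurj} and \ref{qKtoKinj}. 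Hence $\mathcal{D}\subseteq\mathcal{C}$, the obstruction groups vanish in the limit, the $L^p$ assembly map is an isomorphism (Theorem \ref{mainthm}), and for $p=2$ the identification of this model with the Baum--Connes assembly map in the appendix of \cite{GWY2} yields the stated theorem. If you wish to keep your uniform ``controlled isomorphism'' formulation, you would need a uniform bound on decomposition depth, which finite dynamical complexity does not provide; as written, the inductive step and the final passage from controlled to ordinary $K$-theory do not go through.
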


Although the aforementioned result follows from earlier work of Tu \cite{Tu} on the Baum-Connes conjecture for amenable groupoids, the proof given in \cite{GWY2} is completely different, and in some sense more direct and more elementary. In fact, their proof is inspired by Yu's proof of the coarse Baum-Connes conjecture for spaces with finite asymptotic dimension in \cite{Yu98}. The main tool in both cases is a controlled Mayer-Vietoris sequence, which is part of a framework of quantitative (or controlled) $K$-theory for $C^*$-algebras  developed by Yu together with Oyono-Oyono in \cite{OY15,OY} (also see \cite{YuQuant} for an overview). Roughly speaking, finite dimension/complexity enables one to apply the Mayer-Vietoris argument a finite number of times to arrive at the quantitative $K$-theory of the algebra in question, and the (standard) $K$-theory of the algebra is obtained as a limit of the quantitative $K$-theory. 

In an earlier paper \cite{Chung1}, we have extended the framework of quantitative $K$-theory to a larger class of Banach algebras, so that it can be applied to algebras of bounded linear operators on $L^p$ spaces in particular. Our goal in this paper is to consider the $L^p$ analog of the assembly map in \cite{GWY2}, and use our extended framework of quantitative $K$-theory to show that this assembly map is an isomorphism under the assumption of finite dynamical complexity. In fact, one sees that the techniques and proofs in \cite{GWY2} carry over to our setting with minor adjustments, the main differences being the exposition of the homotopy invariance argument in Section \ref{sec:Base}, and more care in the use of quantitative $K$-theory (due to an additional norm control parameter) in the Mayer-Vietoris argument used in the proof of the main theorem in Section \ref{sec:Ind}. 

In order to explain our setup, let us briefly recall the Baum-Connes conjecture with coefficients.
Given a (separable) $C^*$-algebra $A$ and an action of a countable discrete group $\Gamma$ on $A$ by $*$-automorphisms, one may form the reduced crossed product $C^*$-algebra $A\rtimes_\lambda\Gamma$. The Baum-Connes conjecture with coefficients \cite{BCH} posits that a certain homomorphism (or assembly map) \[\mu:K_*^\Gamma(\underline{E}\Gamma;A)\rightarrow K_*(A\rtimes_\lambda\Gamma)\] is an isomorphism, where the left-hand side is the equivariant $K$-homology with coefficients in $A$ of the classifying space $\underline{E}\Gamma$ for proper $\Gamma$-actions, and the right-hand side is the $K$-theory of the reduced crossed product $C^*$-algebra. Consider a particular model for $\underline{E}\Gamma$, namely $\bigcup_{s\geq 0}P_s(\Gamma)$ equipped with the $\ell^1$ metric (cf. \cite[Section 2]{BCH}), where $P_s(\Gamma)$ is the Rips complex of $\Gamma$ at scale $s$, i.e., the simplicial complex with vertex set $\Gamma$, and where a finite subset $E\subset\Gamma$ spans a simplex if and only if $d(g,h)\leq s$ for all $g,h\in E$. Here we assume that $\Gamma$ is equipped with a proper length function and $d$ is the associated metric. One may then reformulate the Baum-Connes assembly map as \[\lim_{s\rightarrow\infty}K_*(C_L^*(P_s(\Gamma);A))\stackrel{\epsilon_0}{\rightarrow} \lim_{s\rightarrow\infty}K_*(C^*(P_s(\Gamma);A))\cong K_*(A\rtimes_\lambda\Gamma),\]
where $C^*(P_s(\Gamma);A)$ is the equivariant Roe algebra with coefficients in $A$, $C_L^*(P_s(\Gamma);A)$ is Yu's localization algebra with coefficients in $A$, and $\epsilon_0$ is induced by the evaluation-at-zero map. The fact that $K$-homology can be identified with the $K$-theory of the localization algebra was shown for finite-dimensional simplicial complexes in \cite{Yu97}, and for general locally compact metric spaces in \cite{QR}. The fact that the equivariant Roe algebra with coefficients is stably isomorphic to the reduced crossed product underlies the coarse-geometric approach to the Baum-Connes conjecture with coefficients. We refer the reader to \cite[Appendix B]{GWY2} for a detailed comparison of the abovementioned assembly maps, and also \cite{Roe02} for a comparison of assembly maps without coefficients.

Now let $A$ be a norm-closed subalgebra of $B(L^p(Z,\mu))$ for some measure space $(Z,\mu)$ and $p\in[1,\infty)$. We refer to such algebras as $L^p$ operator algebras. Also suppose that a countable discrete group $\Gamma$ acts on $A$ by isometric automorphisms. By mimicking the $C^\ast$-algebraic definitions, we can define $L^p$ analogs $B^p_L(P_s(\Gamma);A)$, $B^p(P_s(\Gamma);A)$, $A\rtimes_{\lambda,p}\Gamma$ of the localization algebra, the Roe algebra, and the reduced crossed product respectively. We can then consider the following $L^p$ assembly map for the $\Gamma$-action:
\[\lim_{s\rightarrow\infty}K_*(B^p_L(P_s(\Gamma);A))\stackrel{\epsilon_0}{\rightarrow} \lim_{s\rightarrow\infty}K_*(B^p(P_s(\Gamma);A))\cong K_*(A\rtimes_{\lambda,p}\Gamma).\]
In this paper, $A$ will be $C(X)$, where $X$ is a compact Hausdorff space, and
our main result may be stated as follows:
\begin{thm} (cf. Theorem \ref{mainthm}) \label{LpBC}
Suppose that an action of a countable discrete group $\Gamma$ on a compact Hausdorff space $X$ has finite dynamical complexity. Then the $L^p$ assembly map for the action is an isomorphism for $p\in[1,\infty)$.
\end{thm}

A result like this can be seen as an indication of the computability of the $K$-theory of the $L^p$ reduced crossed product on the right-hand side of the assembly map since the $K$-theory of localization-type algebras like the one on the left-hand side of the assembly map has Mayer-Vietoris sequences associated to decompositions of the simplicial complex and other properties of a generalized homology theory (cf. \cite{Yu97,CW}). Moreover, it enables one to transfer questions about whether the $K$-theory of certain $L^p$ reduced crossed products is independent of $p$ (cf. \cite[Problem 11.2]{PhilOpen} and \cite{LY}) over to the left-hand side of the assembly map. In light of interest in $L^p$ operator algebras in recent years (e.g. \cite{ChungLiRigidity,CN,Engel,GardLupGrpoid,GardThielGrp,GardThielConv,Pooya,LY,Phil12,Phil13,PhilBle,PhilViola}), our result is a little step towards our understanding of the $K$-theory of some of these algebras.

On a different note, our method of proof indicates the relative ease with which arguments involving quantitative (or controlled) $K$-theory in the $C^\ast$-algebraic setting may be adapted to the $L^p$ setting, while there is much technical difficulty in adapting other approaches to the Baum-Connes conjecture, such as the Dirac-dual Dirac method, to deal with assembly maps involving $L^p$ operator algebras.


In Section 2, we define $L^p$ Roe algebras and localization algebras associated to an action of a countable discrete group on a compact Hausdorff space, and we define the $L^p$ assembly map in terms of the $K$-theory of these algebras. In the case $p=2$, these are exactly the algebras and map considered in \cite{GWY2}. In Section 3, we associate subalgebras of these algebras to subgroupoids of the transformation groupoid given by the action, and recall the notion of dynamical complexity. In Section 4, we recall some definitions and facts from the framework of quantitative $K$-theory that we developed in \cite{Chung1}. Finally, in Section 5, we prove our main result via a homotopy invariance argument and a Mayer-Vietoris argument.

\section{An $L^p$ assembly map} \label{Sect:assembly}

Throughout this section, $\Gamma$ will be a countable discrete group acting on a compact Hausdorff space $X$ by homeomorphisms. The action will be denoted by $\Gamma\curvearrowright X$. We also assume that $\Gamma$ is equipped with a proper length function $l:\Gamma\rightarrow\mathbb{N}$ and the associated right invariant metric. We will define an assembly map in terms of $L^p$ versions of localization algebras and Roe algebras, where $p\in[1,\infty)$. When $p=2$, we recover (a model for) the Baum-Connes assembly map for $\Gamma$ with coefficients in $C(X)$ considered in \cite{GWY2}.

\begin{defn}
Let $s\geq 0$. The Rips complex of $\Gamma$ at scale $s$, denoted $P_s(\Gamma)$, is the simplicial complex with vertex set $\Gamma$, and where a finite subset $E\subset\Gamma$ spans a simplex if and only if $d(g,h)\leq s$ for all $g,h\in E$.

Points in $P_s(\Gamma)$ can be written as formal linear combinations $\sum_{g\in\Gamma}t_g g$, where $t_g\in[0,1]$ for each $g$ and $\sum_{g\in\Gamma}t_g=1$. We equip $P_s(\Gamma)$ with the $\ell^1$ metric, i.e., $d(\sum_{g\in\Gamma}t_g g,\sum_{g\in\Gamma}s_g g)=\sum_{g\in\Gamma}|t_g-s_g|$.

The barycentric coordinates on $P_s(\Gamma)$ are the continuous functions \[t_g:P_s(\Gamma)\rightarrow[0,1]\] uniquely determined by the condition $z=\sum_{g\in\Gamma}t_g(z)g$ for all $z\in P_s(\Gamma)$.
\end{defn}

Properness of the length function on $\Gamma$ implies that $P_s(\Gamma)$ is finite dimensional and locally compact. Also, the right translation action of $\Gamma$ on itself extends to a right action of $\Gamma$ on $P_s(\Gamma)$ by isometric simplicial automorphisms.

In the usual setting of the Baum-Connes conjecture (e.g. in \cite{GWY2}), one considers Hilbert spaces and $C^*$-algebras encoding the large scale geometry of $\Gamma$ and the topology of $P_s(\Gamma)$. We will replace these Hilbert spaces by $L^p$ spaces, thereby obtaining $L^p$ operator algebras instead of $C^*$-algebras.

First, we recall some facts about $L^p$ tensor products. Details can be found in \cite[Chapter 7]{DF}.

For $p\in[1,\infty)$, there is a tensor product of $L^p$ spaces such that we have a canonical isometric isomorphism $L^p(X,\mu)\otimes L^p(Y,\nu)\cong L^p(X\times Y,\mu\times\nu)$, which identifies, for every $\xi\in L^p(X,\mu)$ and $\eta\in L^p(Y,\nu)$, the element $\xi\otimes\eta$ with the function $(x,y)\mapsto\xi(x)\eta(y)$ on $X\times Y$. Moreover, this tensor product has the following properties:
\begin{enumerate}
\item Under the identification above, the linear span of all $\xi\otimes\eta$ is dense in $L^p(X\times Y,\mu\times\nu)$.
\item $||\xi\otimes\eta||_p=||\xi||_p||\eta||_p$ for all $\xi\in L^p(X,\mu)$ and $\eta\in L^p(Y,\nu)$.
\item The tensor product is commutative and associative.
\item If $a\in B(L^p(X_1,\mu_1),L^p(X_2,\mu_2))$ and $b\in B(L^p(Y_1,\nu_1),L^p(Y_2,\nu_2))$, then there exists a unique \[c\in B(L^p(X_1\times Y_1,\mu_1\times\nu_1),L^p(X_2\times Y_2,\mu_2\times\nu_2))\] such that under the identification above, $c(\xi\otimes\eta)=a(\xi)\otimes b(\eta)$ for all $\xi\in L^p(X_1,\mu_1)$ and $\eta\in L^p(Y_1,\nu_1)$. We will denote this operator by $a\otimes b$. Moreover, $||a\otimes b||=||a|| ||b||$.
\item The tensor product of operators is associative, bilinear, and satisfies $(a_1\otimes b_1)(a_2\otimes b_2)=a_1a_2\otimes b_1b_2$.
\end{enumerate}
If $A\subseteq B(L^p(X,\mu))$ and $B\subseteq B(L^p(Y,\nu))$ are norm-closed subalgebras, we then define $A\otimes B\subseteq B(L^p(X\times Y,\mu\times\nu))$ to be the closed linear span of all elements of the form $a\otimes b$ with $a\in A$ and $b\in B$.

\begin{defn}
For $s\geq 0$, define \[Z_s=\biggl\{\sum_{g\in\Gamma}t_g g\in P_s(\Gamma):t_g\in\mathbb{Q}\;\text{for all}\;g\in\Gamma\biggr\}.\]
Note that $Z_s$ is a $\Gamma$-invariant, countable, dense subset of $P_s(\Gamma)$.

Define \[E_s=\ell^p(Z_s)\otimes\ell^p(X)\otimes\ell^p\otimes\ell^p(\Gamma)\cong \ell^p(Z_s\times X,\ell^p\otimes\ell^p(\Gamma)),\] and equip $E_s$ with the isometric $\Gamma$-action given by \[u_g\cdot(\delta_z\otimes\delta_x\otimes\eta\otimes\delta_h)=\delta_{zg^{-1}}\otimes\delta_{gx}\otimes\eta\otimes\delta_{gh}\] for $z\in Z_s$, $x\in X$, $\eta\in\ell^p$, and $g,h\in\Gamma$. 
\end{defn}

Note that if $s_0\leq s$, then $P_{s_0}(\Gamma)$ identifies equivariantly and isometrically with a subcomplex of $P_s(\Gamma)$, and $Z_{s_0}\subset Z_s$. Hence we have a canonical equivariant isometric inclusion $E_{s_0}\subset E_s$.

We will write $\mathcal{K}_\Gamma$ for the algebra of compact operators on $\ell^p\otimes\ell^p(\Gamma)\cong\ell^p(\mathbb{N}\times\Gamma)$ equipped with the $\Gamma$-action induced by the tensor product of the trivial action on $\ell^p$ and the left regular representation on $\ell^p(\Gamma)$. We also equip the algebra $C(X)\otimes \mathcal{K}_\Gamma$ with the diagonal action of $\Gamma$. Note that the natural faithful representation of $C(X)\otimes \mathcal{K}_\Gamma$ on $\ell^p(X)\otimes\ell^p\otimes\ell^p(\Gamma)$ is covariant for the representation defined by tensoring the natural action on $\ell^p(X)$, the trivial representation on $\ell^p$, and the regular representation on $\ell^p(\Gamma)$. 

Now we can define the $L^p$ operator algebras that will feature in our assembly map.

\begin{defn}
Let $T$ be a bounded linear operator on $E_s$, which we may regard as a ($Z_s\times Z_s$)-indexed matrix $T=(T_{y,z})$ with \[T_{y,z}\in B(\ell^p(X)\otimes\ell^p\otimes\ell^p(\Gamma))\] for each $y,z\in Z_s$.
\begin{enumerate}
\item $T$ is $\Gamma$-invariant if $u_g T u_g^{-1}=T$ for all $g\in\Gamma$, i.e., $T_{y,z}=g\cdot T_{yg,zg}$ for all $g\in\Gamma$.
\item The Rips-propagation of $T$ is $\sup\{d_{P_s(\Gamma)}(y,z):T_{y,z}\neq 0\}$.
\item The $\Gamma$-propagation of $T$, denoted by $\mathrm{prop}_\Gamma(T)$, is \[\sup\{d_\Gamma(g,h):T_{y,z}\neq 0\;\text{for some}\; y,z\in Z_s\;\text{with}\; t_g(y)\neq 0\;\text{and}\;t_h(z)\neq 0\}.\] 
\item $T$ is $X$-locally compact if $T_{y,z}\in C(X)\otimes \mathcal{K}_\Gamma$ for all $y,z\in Z_s$, and if for any compact subset $F\subset P_s(\Gamma)$, the set \[\{(y,z)\in (F\times F)\cap(Z_s\times Z_s):T_{y,z}\neq 0\}\] is finite.
\end{enumerate}
\end{defn}

\begin{defn}
Let $\mathbb{C}[\Gamma\curvearrowright X;s]$ denote the algebra of all $\Gamma$-invariant, $X$-locally compact operators on $E_s$ with finite $\Gamma$-propagation.

Let $B^p(\Gamma\curvearrowright X;s)$ denote the closure of $\mathbb{C}[\Gamma\curvearrowright X;s]$ with respect to the operator norm on $B(E_s)$. We will call $B^p(\Gamma\curvearrowright X;s)$ the (equivariant) $L^p$ Roe algebra of $\Gamma\curvearrowright X$ at scale $s$.
\end{defn}

We will regard the algebras above as concretely represented on $E_s$, and we will often think of elements of $B^p(\Gamma\curvearrowright X;s)$ as matrices $(T_{y,z})_{y,z\in Z_s}$ with entries being continuous equivariant functions $T_{y,z}:X\rightarrow \mathcal{K}_\Gamma$ (with additional properties).

\begin{defn}
Let $\mathbb{C}_L[\Gamma\curvearrowright X;s]$ denote the algebra of all bounded, uniformly continuous functions $a:[0,\infty)\rightarrow\mathbb{C}[\Gamma\curvearrowright X;s]$ such that the $\Gamma$-propagation of $a(t)$ is uniformly finite as $t$ varies, and such that the Rips-propagation of $a(t)$ tends to zero as $t\rightarrow\infty$. 

Let $B^p_L(\Gamma\curvearrowright X;s)$ denote the completion of $\mathbb{C}_L[\Gamma\curvearrowright X;s]$ with respect to the norm \[||a||:=\sup_{t\in[0,\infty)} ||a(t)||_{B^p(\Gamma\curvearrowright X;s)}.\]
We will call $B^p_L(\Gamma\curvearrowright X;s)$ the $L^p$ localization algebra of $\Gamma\curvearrowright X$ at scale $s$.
\end{defn}

We will regard $B^p_L(\Gamma\curvearrowright X;s)$ as concretely represented on $L^p[0,\infty)\otimes E_s$, and elements of $B^p_L(\Gamma\curvearrowright X;s)$ can be regarded as bounded, uniformly continuous functions $a:[0,\infty)\rightarrow B^p(\Gamma\curvearrowright X;s)$ (with additional properties).

Now consider the evaluation-at-zero homomorphism \[\epsilon_0:B^p_L(\Gamma\curvearrowright X;s)\rightarrow B^p(\Gamma\curvearrowright X;s),\] which induces a homomorphism on $K$-theory \[\epsilon_0:K_*(B^p_L(\Gamma\curvearrowright X;s))\rightarrow K_*(B^p(\Gamma\curvearrowright X;s)).\]

If $s_0\leq s$, then the equivariant isometric inclusion $E_{s_0}\subset E_s$ allows us to regard $\mathbb{C}[\Gamma\curvearrowright X;s_0]$ as a subalgebra of $\mathbb{C}[\Gamma\curvearrowright X;s]$. We then regard $B^p(\Gamma\curvearrowright X;s_0)$ (resp. $B^p_L(\Gamma\curvearrowright X;s_0)$) as a subalgebra of $B^p(\Gamma\curvearrowright X;s)$ (resp. $B^p_L(\Gamma\curvearrowright X;s)$). Thus there are directed systems of inclusions of $L^p$ operator algebras $(B^p(\Gamma\curvearrowright X;s))_{s\geq 0}$ and $(B^p_L(\Gamma\curvearrowright X;s))_{s\geq 0}$, and the evaluation-at-zero maps above are compatible with these inclusions.

\begin{defn} \label{assembly}
The $L^p$ assembly map for the action $\Gamma\curvearrowright X$ is the direct limit
\[ \epsilon_0:\lim_{s\rightarrow\infty}K_*(B^p_L(\Gamma\curvearrowright X;s))\rightarrow\lim_{s\rightarrow\infty}K_*(B^p(\Gamma\curvearrowright X;s)). \]
\end{defn}
When $p=2$, this is the model for the Baum-Connes assembly map considered in \cite{GWY2}.

Regarding $A=C(X)$ as an $L^p$ operator algebra (acting by multiplication on $L^p(X,\mu)$ for some measure $\mu$) with an isometric $\Gamma$-action, we can define an $L^p$ reduced crossed product as follows:

Consider $C_c(\Gamma,A)$, the set of finite sums of the form $\sum_{g\in\Gamma}a_g g$ with $a_g\in A$ and with the product given by \[ \biggl(\sum_{g\in\Gamma}a_g g\biggr)\biggl(\sum_{h\in\Gamma}b_h h\biggr)=\sum_{g,h\in\Gamma}a_g\alpha_g(b_h)gh, \] where $\alpha$ denotes the $\Gamma$-action on $A$. There is a natural faithful representation of $C_c(\Gamma,A)$ on $\ell^p(\Gamma,L^p(X,\mu))$ given by
\begin{align*}
(a\xi)(h) &= \alpha_{h^{-1}}(a)\xi(h), \\
(g\xi)(h) &= \xi(g^{-1}h)
\end{align*}
for $a\in A$, $g,h\in\Gamma$, and $\xi\in\ell^p(\Gamma,L^p(X,\mu))$. We then define the $L^p$ reduced crossed product $A\rtimes_{\lambda,p}\Gamma$ to be the operator norm closure of $C_c(\Gamma,A)$ in $B(\ell^p(\Gamma,L^p(X,\mu)))$.

\begin{rem}
For a general $L^p$ operator algebra $A$, it does not seem clear that this way of defining $A\rtimes_{\lambda,p}\Gamma$ is independent of the representation of $A$, which is why Phillips defined the $L^p$ reduced crossed product differently  \cite[Definition 3.3]{Phil13}.
But for a compact metrizable space $X$, one may be able to show that $C(X)\rtimes_{\lambda,p}\Gamma$ is independent of the representation of $C(X)$ on $L^p$ spaces $L^p(Y,\nu)$, at least when the representation is unital and isometric, and the measure $\nu$ is $\sigma$-finite. This is because such representations must factor through the representation of $L^\infty(Y,\nu)$ on $L^p(Y,\nu)$ as multiplication operators \cite[Theorem 4.5]{PhilViola}, a consequence of which is that $C(X)$ has unique $L^p$ operator matrix norms \cite[Proposition 4.6]{PhilViola}.
\end{rem}

One can show that the equivariant $L^p$ Roe algebra $B^p(\Gamma\curvearrowright X;s)$ is isomorphic to $(C(X)\rtimes_{\lambda,p}\Gamma)\otimes K(\ell^p)$. 
Indeed, one obtains an isomorphism between $\mathbb{C}[\Gamma\curvearrowright X;s]$ and $C_c(\Gamma,C(X))\odot K(\ell^p)$ (and between their respective completions) via conjugation by an appropriate invertible operator between $\ell^p$ spaces.
In the special case where there is a bounded fundamental domain $D\subset Z_s$ for the $\Gamma$-action, the invertible operator $U:E_s=\ell^p(Z_s)\otimes\ell^p(X)\otimes\ell^p\otimes\ell^p(\Gamma)\rightarrow \ell^p(\Gamma)\otimes\ell^p(D)\otimes\ell^p(X)\otimes\ell^p\otimes\ell^p(\Gamma)$ given by $\xi\mapsto\sum_{g\in\Gamma}\delta_g\otimes\chi_D u_g\xi$ implements such an isomorphism.

It can also be shown that the canonical homomorphism $C(X)\rtimes_{\lambda,p}\Gamma\rightarrow (C(X)\rtimes_{\lambda,p}\Gamma)\otimes K(\ell^p)$ given by taking the tensor product with a rank one idempotent induces an isomorphism on $K$-theory at least when $p\in(1,\infty)$ by using the same direct limit argument as in the $p=2$ case \cite[Lemma 6.6]{Phil13}.

For most of the rest of this paper, we will work with the kernel of the $L^p$ assembly map, and we introduce the following notation.

\begin{defn}
Let $B^p_{L,0}(\Gamma\curvearrowright X;s)$ be the subalgebra of $B^p_L(\Gamma\curvearrowright X;s)$ consisting of functions $a$ such that $a(0)=0$. We will call $B^p_{L,0}(\Gamma\curvearrowright X;s)$ the $L^p$ obstruction algebra of $\Gamma\curvearrowright X$ at scale $s$.
\end{defn}

\begin{lem}\label{Lem:obsvan}
The $L^p$ assembly map for $\Gamma\curvearrowright X$ is an isomorphism if and only if \[\lim_{s\rightarrow\infty}K_*(B^p_{L,0}(\Gamma\curvearrowright X;s))=0.\]
\end{lem}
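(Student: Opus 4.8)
The plan is to realise the evaluation-at-zero map at each scale $s$ as the quotient map of a short exact sequence of $L^p$ operator algebras, and then feed the associated six-term exact sequence in $K$-theory through the direct limit over $s$.

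First I would verify that, for each $s\geq 0$, the obstruction algebra $B^p_{L,0}(\Gamma\curvearrowright X;s)$ is exactly the kernel of $\epsilon_0\colon B^p_L(\Gamma\curvearrowright X;s)\to B^p(\Gamma\curvearrowright X;s)$. Since $\|a(0)\|\leq\|a\|$, the map $\epsilon_0$ is norm-nonincreasing, so its kernel is closed and contains the closure of $\{a\in\mathbb{C}_L[\Gamma\curvearrowright X;s]:a(0)=0\}$; for the reverse inclusion, given $a$ in the kernel and $a_n\in\mathbb{C}_L[\Gamma\curvearrowright X;s]$ with $a_n\to a$, I would replace $a_n$ by $t\mapsto a_n(t)-\phi(t)\,a_n(0)$, where $\phi\colon[0,\infty)\to[0,1]$ is a fixed compactly supported continuous function with $\phi(0)=1$. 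This new function still lies in $\mathbb{C}_L[\Gamma\curvearrowright X;s]$ (it has the same uniformly bounded $\Gamma$-propagation, and its Rips-propagation is eventually $0$), it vanishes at $0$, and it converges to $a$ because $\|\phi(\cdot)\,a_n(0)\|=\|a_n(0)\|\to\|a(0)\|=0$. Next comes the one substantive analytic point: $\epsilon_0$ is surjective. Exactly as in the $C^*$-algebraic setting (cf.\ \cite{GWY2} and \cite{WY}), one constructs for each $T\in\mathbb{C}[\Gamma\curvearrowright X;s]$ a ``localizing homotopy'' $a\in\mathbb{C}_L[\Gamma\curvearrowright X;s]$ with $a(0)=T$, with $a(t)$ of uniformly bounded $\Gamma$-propagation and Rips-propagation tending to $0$, and with $\|a\|\leq C\|T\|$ for a universal constant $C$; the construction uses only the simplicial geometry of $P_s(\Gamma)$ and the tensor-product operator estimates recalled above, so it transfers to the $L^p$ setting unchanged. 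A telescoping argument then promotes this norm-controlled lift on the dense subalgebra $\mathbb{C}[\Gamma\curvearrowright X;s]$ to surjectivity of $\epsilon_0$ onto $B^p(\Gamma\curvearrowright X;s)$. Combining the two steps gives, for every $s$, a short exact sequence
\[
0\longrightarrow B^p_{L,0}(\Gamma\curvearrowright X;s)\longrightarrow B^p_L(\Gamma\curvearrowright X;s)\stackrel{\epsilon_0}{\longrightarrow}B^p(\Gamma\curvearrowright X;s)\longrightarrow 0 .
\]

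From this short exact sequence I would invoke the standard six-term exact sequence in topological $K$-theory, which is available for the class of Banach algebras treated in \cite{Chung1}, and then pass to the direct limit over $s\geq 0$. This is legitimate because the structure inclusions for $s_0\leq s$ are compatible with $\epsilon_0$ and with the inclusions of the obstruction algebras (as noted after Definition \ref{assembly}), because topological $K$-theory is continuous for such directed systems, and because direct limits of exact sequences of abelian groups are exact. The outcome is a six-term exact sequence whose $K_i(B^p_L)\to K_i(B^p)$ terms are precisely the components of the $L^p$ assembly map of Definition \ref{assembly}, sitting between the terms $\lim_{s\to\infty}K_*(B^p_{L,0}(\Gamma\curvearrowright X;s))$.

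Finally I would conclude by a diagram chase. If $\lim_{s\to\infty}K_*(B^p_{L,0}(\Gamma\curvearrowright X;s))=0$ in both degrees, exactness forces the assembly map to be injective and surjective in both degrees. Conversely, if the assembly map is an isomorphism in degrees $0$ and $1$, then its surjectivity kills the connecting maps out of $\lim_{s\to\infty}K_*(B^p(\Gamma\curvearrowright X;s))$ and its injectivity kills the maps $\lim_{s\to\infty}K_*(B^p_{L,0})\to\lim_{s\to\infty}K_*(B^p_L)$, whereupon exactness squeezes $\lim_{s\to\infty}K_*(B^p_{L,0}(\Gamma\curvearrowright X;s))$ to $0$. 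The only place genuine work is required is the surjectivity of $\epsilon_0$, i.e.\ the construction of the norm-controlled localizing homotopy; everything else is formal homological algebra, and, as emphasised in the introduction, this construction is a faithful adaptation of its $C^*$-algebraic counterpart.
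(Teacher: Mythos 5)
Your proposal is correct and follows essentially the same route as the paper: realize $B^p_{L,0}(\Gamma\curvearrowright X;s)$ as the kernel of $\epsilon_0$, form the short exact sequence $0\to B^p_{L,0}\to B^p_L\to B^p\to 0$ at each scale $s$, apply the six-term exact sequence in $K$-theory, and pass to direct limits using continuity of $K$-theory and exactness of direct limits of abelian groups. The only remark is that the surjectivity of $\epsilon_0$, which you present as the substantive analytic step requiring a geometric ``localizing homotopy,'' is in fact immediate: for $T\in\mathbb{C}[\Gamma\curvearrowright X;s]$ the path $t\mapsto\max(1-t,0)\,T$ is bounded, uniformly continuous, has uniformly finite $\Gamma$-propagation and eventually vanishing Rips-propagation, hence lies in $\mathbb{C}_L[\Gamma\curvearrowright X;s]$ and evaluates to $T$ at $0$, and this isometric linear section extends to the closure, so no telescoping or simplicial construction is needed.
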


\begin{proof}
Just as in the $C^\ast$-algebraic setting, we have a short exact sequence \[0\rightarrow B^p_{L,0}(\Gamma\curvearrowright X;s)\rightarrow B^p_L(\Gamma\curvearrowright X;s)\rightarrow B^p(\Gamma\curvearrowright X;s)\rightarrow 0,\] which induces the usual six-term exact sequence in $K$-theory. The lemma then follows from continuity of $K$-theory under direct limits, and the preservation of exact sequences under direct limits of abelian groups.
\end{proof}

Our goal in this paper will be to show that if $\Gamma\curvearrowright X$ has finite dynamical complexity, then $\lim_{s\rightarrow\infty}K_*(B^p_{L,0}(\Gamma\curvearrowright X;s))=0$, and thus the $L^p$ assembly map for $\Gamma\curvearrowright X$ in Definition \ref{assembly} is an isomorphism.

\section{Groupoids and dynamical complexity}

In this section, we consider the transformation groupoid associated to a group action, its subgroupoids, and $L^p$ operator algebras associated with them. We also recall the definition of dynamical complexity from \cite{GWY2}.

\begin{defn}
The transformation groupoid $\Gamma\ltimes X$ associated to $\Gamma\curvearrowright X$ is $\{(gx,g,x):g\in\Gamma,x\in X\}$ topologized such that the projection $\Gamma\ltimes X\rightarrow\Gamma\times X$ onto the second and third factors is a homeomorphism, and equipped with the following additional structure:
\begin{enumerate}
\item A pair $((hy,h,y),(gx,g,x))$ of elements in $\Gamma\ltimes X$ is said to be composable if $y=gx$. In this case, their product is defined by \[(hgx,h,gx)(gx,g,x)=(hgx,hg,x).\]
\item The inverse of an element $(gx,g,x)\in\Gamma\ltimes X$ is \[(gx,g,x)^{-1}=(x,g^{-1},gx).\]
\item The units of $\Gamma\ltimes X$ are the elements of the clopen subspace \[G^{(0)}=\{(x,e,x):x\in X\},\] where $e$ is the identity in $\Gamma$. We call $G^{(0)}$ the unit space of $\Gamma\ltimes X$.
\end{enumerate}
\end{defn}

\begin{defn}
Let $s\geq 0$, and let $P_s(\Gamma)$ be the Rips complex of $\Gamma$ at scale $s$. The support of $z=\sum_{g\in\Gamma}t_g(z) g\in P_s(\Gamma)$ is the finite set \[\supp(z)=\{g\in\Gamma:t_g(z)\neq 0\}.\]
The support of $T=(T_{y,z})_{y,z\in Z_s}\in B^p(\Gamma\curvearrowright X;s)$ is 
\[ \supp(T)=\left\{(gx,gh^{-1},hx)\in\Gamma\ltimes X:\;\parbox[c][4em][c]{0.5\textwidth}{there exist $y,z\in Z_s$ with $T_{y,z}(x)\neq 0$, $g\in \supp(y)$, and $h\in \supp(z)$}\right\}. \]
\end{defn}

With this definition, one sees that \[\mathrm{prop}_\Gamma(T)=\sup\{l(gh^{-1}):(gx,gh^{-1},hx)\in \supp(T)\;\text{for some}\;x\in X\}.\]

Given two subsets $A,B\subset\Gamma\ltimes X$, we write $AB$ for \[\{ab:a\in A,b\in B,(a,b)\;\text{is composable}\}.\] With this notation, the following lemma says that supports of operators in $B^p(\Gamma\curvearrowright X;s)$ behave as expected under composition of operators.

\begin{lem} \label{supplem}
Let $S,T\in B^p(\Gamma\curvearrowright X;s)$. Then $\supp(ST)\subseteq \supp(S)\supp(T)$.
\end{lem}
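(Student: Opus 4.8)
The plan is to reduce the statement to a pointwise computation on the matrix entries $T_{y,z}(x)$, using the fact that composition of operators represented as $Z_s\times Z_s$ matrices is given by the usual matrix product $(ST)_{y,z} = \sum_{w\in Z_s} S_{y,w}T_{w,z}$, where the sum converges in an appropriate sense (it is locally finite on compact subsets, by $X$-local compactness, and the operator norm controls the rest). First I would fix an element $(gx, gh^{-1}, hx)$ of $\supp(ST)$ and unwind the definition: there exist $y,z\in Z_s$ with $(ST)_{y,z}(x)\neq 0$, together with $g\in\supp(y)$ and $h\in\supp(z)$. Since $(ST)_{y,z}(x) = \sum_{w\in Z_s} S_{y,w}(x)T_{w,z}(x)$ is nonzero, at least one term $S_{y,w}(x)T_{w,z}(x)$ must be nonzero, which forces both $S_{y,w}(x)\neq 0$ and $T_{w,z}(x)\neq 0$ for that particular $w\in Z_s$.

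Next I would pick any $k\in\supp(w)$ (the support of a point in the Rips complex is a nonempty finite set). Then from $S_{y,w}(x)\neq 0$, $g\in\supp(y)$, $k\in\supp(w)$ we get that $a := (gx, gk^{-1}, kx)\in\supp(S)$; and from $T_{w,z}(x)\neq 0$, $k\in\supp(w)$, $h\in\supp(z)$ we get that $b := (kx, kh^{-1}, hx)\in\supp(T)$. The crucial observation is that the pair $(a,b)$ is composable in $\Gamma\ltimes X$: the source of $a$ is $kx$ and the range of $b$ is $kx$, so they compose, and their product is $ab = (gx, (gk^{-1})(kh^{-1}), hx) = (gx, gh^{-1}, hx)$, which is exactly the element we started with. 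Hence $(gx,gh^{-1},hx)\in\supp(S)\supp(T)$, establishing the inclusion.

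The one genuine subtlety — and the step I expect to need the most care — is justifying that the matrix-product formula for $(ST)_{y,z}$ is valid and that a nonzero value of an infinite sum $\sum_w S_{y,w}(x)T_{w,z}(x)$ really does force some individual summand to be nonzero. This is immediate for a finite sum, so the work is to reduce to that case: for a fixed pair $(y,z)$ one should argue, using $X$-local compactness of $S$ and $T$ together with the finiteness of $\supp(y)$ and $\supp(z)$ and the boundedness of Rips- and $\Gamma$-propagation, that only finitely many $w\in Z_s$ contribute to $(ST)_{y,z}$ — alternatively, one can prove the containment first for $S,T\in\mathbb{C}[\Gamma\curvearrowright X;s]$, where all sums are genuinely finite, and then note that $\supp$ is, in the relevant sense, continuous under the norm-limits defining $B^p(\Gamma\curvearrowright X;s)$ (an operator in the closure whose $(y,z)$ entry at $x$ is nonzero is a norm-limit of finite-propagation operators, and one can control which group elements can appear). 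In the write-up I would handle the algebraic core on $\mathbb{C}[\Gamma\curvearrowright X;s]$, where the argument above is clean, and then remark that the general case follows since the supports of all operators involved are constrained by the (uniformly bounded) propagation data; the rest of the verification is routine bookkeeping with the groupoid composition, range, and source maps.
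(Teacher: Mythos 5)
Your argument is essentially the paper's proof: it likewise takes $(gx,gh^{-1},hx)\in\supp(ST)$ with witnesses $y,z\in Z_s$, produces $w\in Z_s$ with $S_{y,w}(x)\neq 0$ and $T_{w,z}(x)\neq 0$, picks $k\in\supp(w)$, and factors $(gx,gh^{-1},hx)=(gx,gk^{-1},kx)(kx,kh^{-1},hx)\in\supp(S)\supp(T)$. The summation subtlety you flag is simply not dwelt on in the paper, which treats the existence of such a $w$ as immediate from the matrix description of the product, so your optional reduction to $\mathbb{C}[\Gamma\curvearrowright X;s]$ is harmless extra care rather than a different route.
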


\begin{proof}
Suppose that $(gx,gh^{-1},hx)\in \supp(ST)$. Then there are $y,z\in Z_s$ such that $(ST)_{y,z}(x)\neq 0$, $g\in \supp(y)$, and $h\in \supp(z)$. Thus there is $w\in Z_s$ such that $S_{y,w}(x)\neq 0$ and $T_{w,z}(x)\neq 0$. If $k\in \supp(w)$, then $(gx,gk^{-1},kx)\in \supp(S)$ and $(kx,kh^{-1},hx)\in \supp(T)$, so $(gx,gh^{-1},hx)=(gx,gk^{-1},kx)(kx,kh^{-1},hx)\in \supp(S)\supp(T)$.
\end{proof}

\begin{defn}
Let $\Gamma\ltimes X$ be the transformation groupoid associated to $\Gamma\curvearrowright X$. A subgroupoid of $\Gamma\ltimes X$ is a subset $G\subset \Gamma\ltimes X$ that is closed under composition, taking inverses, and units, i.e.,
\begin{enumerate}
\item If $(hgx,h,gx)$ and $(gx,g,x)$ are in $G$, then so is $(hgx,hg,x)$.
\item If $(gx,g,x)\in G$, then $(gx,g,x)^{-1}\in G$.
\item If $(gx,g,x)\in G$, then $(x,e,x)\in G$ and $(gx,e,gx)\in G$, where $e$ is the identity in $\Gamma$.
\end{enumerate}
A subgroupoid is equipped with the subspace topology from $\Gamma\ltimes X$.
\end{defn}

Subgroupoids of $\Gamma\ltimes X$ give rise to subalgebras of the Roe algebra, localization algebra, and obstruction algebra that we defined in the previous section.

\begin{lem}
Let $G$ be an open subgroupoid of $\Gamma\ltimes X$. Define $\mathbb{C}[G;s]$ to be the subspace of $\mathbb{C}[\Gamma\curvearrowright X;s]$ consisting of all operators $T$ with support contained in a compact subset of $G$. Then $\mathbb{C}[G;s]$ is a subalgebra of $\mathbb{C}[\Gamma\curvearrowright X;s]$.
\end{lem}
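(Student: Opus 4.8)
The plan is to verify directly that $\mathbb{C}[G;s]$ is closed under the vector-space operations and under multiplication, using the support calculus of Lemma \ref{supplem} together with the explicit description of the topology on $\Gamma\ltimes X$. Throughout, for $T\in\mathbb{C}[G;s]$ I fix a compact set $K_T\subseteq G$ with $\supp(T)\subseteq K_T$, which exists by definition.

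For the linear structure, I would argue as follows. If $S,T\in\mathbb{C}[G;s]$, then since every matrix entry $(S+T)_{y,z}$ equals $S_{y,z}+T_{y,z}$, one has $\supp(S+T)\subseteq\supp(S)\cup\supp(T)\subseteq K_S\cup K_T$, and $K_S\cup K_T$ is again a compact subset of $G$; likewise $\supp(\lambda T)\subseteq\supp(T)\subseteq K_T$ for every scalar $\lambda$. Hence $S+T$ and $\lambda T$ again lie in $\mathbb{C}[G;s]$, since they already lie in $\mathbb{C}[\Gamma\curvearrowright X;s]$.

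The substantive point is closure under multiplication. Given $S,T\in\mathbb{C}[G;s]$, the product $ST$ lies in $\mathbb{C}[\Gamma\curvearrowright X;s]$ because the latter is an algebra, and Lemma \ref{supplem} gives $\supp(ST)\subseteq\supp(S)\supp(T)\subseteq K_SK_T$. So it suffices to show that $K_SK_T$ is a compact subset of $G$. Since $G$ is closed under composition of composable pairs, $K_SK_T\subseteq G$. For compactness, recall that $\Gamma\ltimes X$ carries the topology making the projection onto the $\Gamma\times X$ coordinates a homeomorphism, with $\Gamma$ discrete and $X$ compact Hausdorff; in particular the maps $(hy,h,y)\mapsto y$ and $(gx,g,x)\mapsto gx$ are continuous. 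Hence the set $C$ of composable pairs contained in $K_S\times K_T$ is the preimage, under a continuous map restricted to $K_S\times K_T$, of the diagonal of $X\times X$ (which is closed, as $X$ is Hausdorff), so $C$ is closed in the compact space $K_S\times K_T$ and therefore compact. The composition map on composable pairs is continuous, so $K_SK_T$, being its image on $C$, is compact. Thus $\supp(ST)\subseteq K_SK_T$ with $K_SK_T$ a compact subset of $G$, and $ST\in\mathbb{C}[G;s]$. Combined with the previous paragraph, this shows $\mathbb{C}[G;s]$ is a subalgebra.

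I do not anticipate any serious difficulty; the only step requiring a little care is the compactness of $K_SK_T$, which reduces to the continuity of composition together with the observation that the set of composable pairs inside a compact box is closed — both immediate from the concrete description of the groupoid topology. Equivalently, one can simply invoke the general fact that the product of two relatively compact subsets of a locally compact (in particular étale) groupoid is relatively compact.
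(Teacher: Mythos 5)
Your proof is correct and follows essentially the same route as the paper: both reduce, via Lemma \ref{supplem}, to showing that the product of two compact subsets of $G$ is again a compact subset of $G$. The only difference is cosmetic — the paper checks this compactness with a net/subnet argument, whereas you make the same point explicit by noting that the composable pairs form a closed (hence compact) subset of $K_S\times K_T$ and that composition is continuous; this is a slightly more careful rendering of the identical step.
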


\begin{proof}
Given Lemma \ref{supplem}, it suffices to show that if $A$ and $B$ are two relatively compact subsets of $G$, then so is $AB$. To see this, first suppose that $A$ and $B$ are compact. Then any net in $AB$ has a convergent subnet since nets in $A$ and nets in $B$ have this property, and so $AB$ is compact. Now if $A$ and $B$ are relatively compact, then since $AB\subset \bar{A}\bar{B}$ and $\bar{A}\bar{B}$ is compact, it follows that $AB$ is relatively compact.
\end{proof}

\begin{defn}
Let $G$ be an open subgroupoid of $\Gamma\ltimes X$. Let $\mathbb{C}_L[G;s]$ denote the subalgebra of $\mathbb{C}_L[\Gamma\curvearrowright X;s]$ consisting of functions $a$ such that $\bigcup_{t\in[0,\infty)} \supp(a(t))$ has compact closure in $G$. 

Let $\mathbb{C}_{L,0}[G;s]$ denote the ideal of $\mathbb{C}_L[G;s]$ consisting of functions $a$ such that $a(0)=0$.

Let $B^p(G;s)$, $B^p_L(G;s)$, and $B^p_{L,0}(G;s)$ denote the respective closures of $\mathbb{C}[G;s]$, $\mathbb{C}_L[G;s]$, and $\mathbb{C}_{L,0}[G;s]$ in $B^p(\Gamma\curvearrowright X;s)$, $B^p_L(\Gamma\curvearrowright X;s)$, and $B^p_{L,0}(\Gamma\curvearrowright X;s)$.
\end{defn}

Note that when $G=\Gamma\ltimes X$, we have $B^p(G;s)=B^p(\Gamma\curvearrowright X;s)$, and similarly for the localization algebra and obstruction algebra.

Since we will be working mostly with the obstruction algebras, we introduce the following shorthand notation for these algebras. We also need to construct filtrations on these algebras in order to apply quantitative $K$-theory in the proof of the main theorem.

\begin{defn}
Let $G$ be an open subgroupoid of $\Gamma\ltimes X$, and let $s\geq 0$. Set $A^s(G)$ to be $B^p_{L,0}(G;s)$. For $r\geq 0$, define \[A^s(G)_r=\{a\in\mathbb{C}_{L,0}[G;s]:\mathrm{prop}_\Gamma(a(t))\leq r\;\text{for all}\;t\},\] which is a linear subspace of $A^s(G)$.

When $G=\Gamma\ltimes X$, we will simply write $A^s$ and $A^s_r$.
\end{defn}

\begin{lem} \label{filtlem}
Let $G$ be an open subgroupoid of $\Gamma\ltimes X$, and let $s\geq 0$. Then the family $(A^s(G))_{r\geq 0}$ of subspaces of $A^s(G)$ satisfies:
\begin{enumerate}
\item if $r_1\leq r_2$, then $A^s(G)_{r_1}\subset A^s(G)_{r_2}$;
\item $A^s(G)_{r_1}A^s(G)_{r_2}\subset A^s(G)_{r_1+r_2}$ for all $r_1,r_2\geq 0$;
\item $\bigcup_{r\geq 0}A^s(G)_r$ is dense in $A^s(G)$.
\end{enumerate}
\end{lem}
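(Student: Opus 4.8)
The plan is to verify the three filtration axioms directly from the definitions, using Lemma~\ref{supplem} (and its immediate consequence for $\Gamma$-propagation) together with the already-established fact that $\mathbb{C}_{L,0}[G;s]$ is dense in $A^s(G)$.

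\emph{Property (i).} This is immediate: if $a\in A^s(G)_{r_1}$ then $\mathrm{prop}_\Gamma(a(t))\leq r_1\leq r_2$ for all $t$, so $a\in A^s(G)_{r_2}$.

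\emph{Property (ii).} Let $a\in A^s(G)_{r_1}$ and $b\in A^s(G)_{r_2}$; then $ab\in\mathbb{C}_{L,0}[G;s]$ since $\mathbb{C}_{L,0}[G;s]$ is an algebra, so it remains to control the $\Gamma$-propagation of $(ab)(t)=a(t)b(t)$ for each fixed $t$. Here I would use the identity $\mathrm{prop}_\Gamma(T)=\sup\{l(gh^{-1}):(gx,gh^{-1},hx)\in\supp(T)\text{ for some }x\in X\}$ recorded just after the definition of support, combined with Lemma~\ref{supplem}, which gives $\supp(a(t)b(t))\subseteq\supp(a(t))\supp(b(t))$. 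Any element of $\supp(a(t))\supp(b(t))$ has the form $(gx,gk^{-1},kx)(kx,kh^{-1},hx)=(gx,gh^{-1},hx)$ with $(gx,gk^{-1},kx)\in\supp(a(t))$ and $(kx,kh^{-1},hx)\in\supp(b(t))$, so $l(gk^{-1})\leq r_1$ and $l(kh^{-1})\leq r_2$; since $l$ is subadditive (it is a length function), $l(gh^{-1})=l((gk^{-1})(kh^{-1}))\leq l(gk^{-1})+l(kh^{-1})\leq r_1+r_2$. Taking the supremum over such elements and over $t$ gives $ab\in A^s(G)_{r_1+r_2}$.

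\emph{Property (iii).} By definition $A^s(G)=B^p_{L,0}(G;s)$ is the closure of $\mathbb{C}_{L,0}[G;s]$, so it suffices to show $\bigcup_{r\geq 0}A^s(G)_r=\mathbb{C}_{L,0}[G;s]$. The inclusion $\subseteq$ is clear. Conversely, if $a\in\mathbb{C}_{L,0}[G;s]$, then by the defining condition on $\mathbb{C}_L[\Gamma\curvearrowright X;s]$ the $\Gamma$-propagation of $a(t)$ is uniformly finite as $t$ varies, i.e.\ there is some $r\geq 0$ with $\mathrm{prop}_\Gamma(a(t))\leq r$ for all $t$; hence $a\in A^s(G)_r\subseteq\bigcup_{r\geq 0}A^s(G)_r$. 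Thus the union is exactly the dense subalgebra $\mathbb{C}_{L,0}[G;s]$, and in particular it is dense in $A^s(G)$.

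The argument is essentially bookkeeping; the only place where one must be slightly careful is in (ii), making sure the propagation bound passes through the matrix-composition formula. This is exactly what Lemma~\ref{supplem} was set up to handle, so there is no serious obstacle — the subadditivity of the length function does the rest, and the passage to $p\neq 2$ changes nothing since supports and $\Gamma$-propagation are defined purely combinatorially.
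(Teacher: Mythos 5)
Your proof is correct and follows essentially the same route as the paper: (i) and (iii) are immediate from the definitions (with $\bigcup_r A^s(G)_r=\mathbb{C}_{L,0}[G;s]$ by the uniform finiteness of $\Gamma$-propagation), and (ii) is handled exactly as in the paper by combining Lemma~\ref{supplem} with subadditivity of the length function.
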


\begin{proof}
Note that $a\in A^s(G)_r$ if and only if 
\begin{itemize}
\item $a\in\mathbb{C}_{L,0}[G;s]$, and 
\item $l(g)\leq r$ whenever $(gx,g,x)\in \supp(a(t))$ for some $t\geq 0$.
\end{itemize}
Properties (i) and (iii) follow immediately. 

For (ii), if $a\in A^s(G)_{r_1}$, $b\in A^s(G)_{r_2}$, and $(gx,g,x)\in \supp(a(t)b(t))$ for some $t$, then by Lemma \ref{supplem}, $(gx,g,x)=(gx,gh^{-1},hx)(hx,h,x)$ for some $(gx,gh^{-1},hx)\in \supp(a(t))$ and $(hx,h,x)\in \supp(b(t))$. Thus $l(g)\leq l(gh^{-1})+l(h)\leq r_1+r_2$ so $ab\in A^s(G)_{r_1+r_2}$.
\end{proof}

Note that if $S$ is an open subset of $\Gamma\ltimes X$, then $S$ generates an open subgroupoid of $\Gamma\ltimes X$ (cf. \cite[ Lemma 5.2]{GWY1}).

\begin{defn}
Let $G$ be an open subgroupoid of $\Gamma\ltimes X$, let $H$ be an open subgroupoid of $G$, and let $r\geq 0$. The expansion of $H$ by $r$ relative to $G$, denoted by $H^{+r}$, is the open subgroupoid of $\Gamma\ltimes X$ generated by \[H\cup\{(gx,g,x)\in G:x\in H^{(0)},l(g)\leq r\}.\]
\end{defn}

Note that $H^{+r}$ depends on $G$ although we do not indicate this in the notation.

\begin{lem} \label{pdtlem}
Let $G$ be an open subgroupoid of $\Gamma\ltimes X$, let $H$ be an open subgroupoid of $G$, and let $r,s\geq 0$. Then \[A^s(H)\cdot A^s_r(G)\cup A^s_r(G)\cdot A^s(H)\subseteq A^s(H^{+r}).\]
\end{lem}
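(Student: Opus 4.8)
The plan is to reduce the statement to a concrete assertion about $\Gamma$-propagation and supports, and then verify it directly using Lemma~\ref{supplem}. First I would unwind the definitions: an element of $A^s(H) = B^p_{L,0}(H;s)$ is a norm-limit of functions $a \in \mathbb{C}_{L,0}[H;s]$, i.e. those for which $\bigcup_t \supp(a(t))$ has compact closure in $H$; an element of $A^s_r(G)$ is by definition in $\mathbb{C}_{L,0}[G;s]$ with $\mathrm{prop}_\Gamma(b(t)) \leq r$ for all $t$, so by the characterization in the proof of Lemma~\ref{filtlem}, its support at each time $t$ consists of elements $(gx,g,x)$ with $l(g) \leq r$ and $x \in G^{(0)}$. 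Since $A^s(H^{+r})$ is a closed subspace and multiplication is continuous, it suffices to prove the inclusion on the level of the dense subalgebras, i.e. to show $a \cdot b \in \mathbb{C}_{L,0}[H^{+r};s]$ whenever $a \in \mathbb{C}_{L,0}[H;s]$ and $b \in A^s_r(G)$ (and the symmetric statement).

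The key step is the containment of supports. Fix $t \geq 0$. By Lemma~\ref{supplem}, $\supp(a(t)b(t)) \subseteq \supp(a(t))\supp(b(t))$. A composable product of an element of $\supp(a(t)) \subseteq H$ with an element $(gx,g,x) \in \supp(b(t))$, where $l(g) \leq r$ and $x \in G^{(0)}$, is — after tracking base points through the groupoid composition — a product in which one factor lies in $H$ and the other lies in the generating set $\{(gx,g,x) \in G : x \in H^{(0)}, l(g) \leq r\}$ (one needs to observe that the relevant unit lands in $H^{(0)}$ because it is the range/source of an element of $H$, and that the group element has length $\leq r$). Hence $\supp(a(t)b(t)) \subseteq (H^{+r})$. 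Since this holds uniformly in $t$, and since $\bigcup_t \supp(a(t))$ is relatively compact in $H$ while $\bigcup_t \supp(b(t))$ is a subset of the relatively compact (by the local-compactness built into $X$-local compactness together with the finite $\Gamma$-propagation bound) set of elements of length $\leq r$ over $H^{(0)}$, the product $\bigcup_t \supp(a(t)b(t))$ has compact closure in $H^{+r}$ by the argument already used in the proof that $\mathbb{C}[G;s]$ is a subalgebra (relatively compact times relatively compact is relatively compact). That $(ab)(0) = a(0)b(0) = 0$ is immediate, so $ab \in \mathbb{C}_{L,0}[H^{+r};s]$, and the symmetric inclusion $A^s_r(G) \cdot A^s(H) \subseteq A^s(H^{+r})$ follows the same way. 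Taking closures gives the claim.

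The main obstacle I anticipate is purely bookkeeping rather than conceptual: carefully checking that when one composes $(h_1 y, h_1, y) \in H$ with $(gx, g, x) \in \supp(b(t))$, the base points match up correctly so that the resulting element genuinely decomposes into a product of an element of $H$ and an element of the specified generating set for $H^{+r}$ — in particular, that the intermediate unit lies in $H^{(0)}$ and not merely in $G^{(0)}$. This requires using that $H^{(0)}$ is the common range/source of all elements of $H$, so that whichever endpoint of the $b(t)$-factor meets the $H$-factor automatically lies in $H^{(0)}$; one should treat the two orders of multiplication separately since the roles of range and source are swapped. A secondary point of care is that $A^s_r(G)$ is defined only at the algebraic level (it is not closed), so one must argue entirely with $\mathbb{C}_{L,0}$ before passing to closures, rather than trying to approximate elements of $A^s_r(G)$ inside $A^s(G)$.
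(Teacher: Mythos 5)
Your argument is correct and takes essentially the same route as the paper, whose entire proof is "Follows from Lemma \ref{supplem}": you simply spell out the support containment and the relative-compactness bookkeeping (the latter exactly as in the proof that $\mathbb{C}[G;s]$ is a subalgebra). The one point worth making explicit is that for the order $a\cdot b$ the factor $(ky,k,y)\in\supp(b(t))$ has its \emph{range} $ky$, not its source, in $H^{(0)}$, so it is the inverse $(y,k^{-1},ky)$ that lies in the generating set of $H^{+r}$; one then uses $l(k^{-1})=l(k)$ and the fact that the generated subgroupoid is closed under inverses, which is a harmless refinement of your "treat the two orders separately" remark.
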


\begin{proof}
Follows from Lemma \ref{supplem}.
\end{proof}

\begin{lem}
Let $G$ be an open subgroupoid of $\Gamma\ltimes X$, let $H$ be an open subgroupoid of $G$, and let $r_1,r_2\geq 0$. Then $(H^{+r_1})^{+r_2}\subseteq H^{+(r_1+r_2)}$.
\end{lem}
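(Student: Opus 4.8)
The plan is to argue directly from the definition of expansion, showing that the generating set of $(H^{+r_1})^{+r_2}$ is contained in the subgroupoid $H^{+(r_1+r_2)}$. Since $H^{+(r_1+r_2)}$ is in particular a subgroupoid of $\Gamma\ltimes X$, and the subgroupoid generated by a set $S$ is the smallest subgroupoid containing $S$, it suffices to check that the generators of $(H^{+r_1})^{+r_2}$ all lie in $H^{+(r_1+r_2)}$.

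First I would recall that $(H^{+r_1})^{+r_2}$ is generated by $H^{+r_1}\cup\{(gx,g,x)\in G : x\in (H^{+r_1})^{(0)}, l(g)\leq r_2\}$. For the first piece, $H^{+r_1}$ is generated by $H\cup\{(gx,g,x)\in G : x\in H^{(0)}, l(g)\leq r_1\}$; since $r_1\leq r_1+r_2$ (as $r_2\geq 0$) and $H\subseteq H^{+(r_1+r_2)}$, this generating set for $H^{+r_1}$ sits inside $H^{+(r_1+r_2)}$, hence $H^{+r_1}\subseteq H^{+(r_1+r_2)}$. For the second piece, the key observation is that $(H^{+r_1})^{(0)}=H^{(0)}$: expansion only adds arrows whose source and range units already lie over $H^{(0)}$ (the new generators are of the form $(gx,g,x)$ with $x\in H^{(0)}$, so $gx\in H^{(0)}$ as well because $H^{(0)}$ is $\Gamma$-invariant in the relevant sense — more precisely, $(gx,e,gx)$ is forced into the subgroupoid by the subgroupoid axioms applied to $(gx,g,x)$), and this property is preserved under generating a subgroupoid. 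Granting this, a generator $(gx,g,x)\in G$ of the second piece satisfies $x\in (H^{+r_1})^{(0)}=H^{(0)}$ and $l(g)\leq r_2\leq r_1+r_2$, so it belongs to the generating set of $H^{+(r_1+r_2)}$.

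Combining the two cases, every generator of $(H^{+r_1})^{+r_2}$ lies in $H^{+(r_1+r_2)}$, and therefore $(H^{+r_1})^{+r_2}\subseteq H^{+(r_1+r_2)}$.

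The main obstacle I anticipate is the bookkeeping around unit spaces: one must verify carefully that $(H^{+r_1})^{(0)}=H^{(0)}$, which in turn requires knowing that passing from a generating subset of $\Gamma\ltimes X$ to the subgroupoid it generates does not enlarge the set of units beyond the ranges and sources of the generators together with $H^{(0)}$. This is essentially the content of \cite[Lemma 5.2]{GWY1} on subgroupoids generated by open subsets, and the remaining verification is a routine check that products and inverses of arrows over $H^{(0)}$ stay over $H^{(0)}$, which is immediate from the groupoid structure of $\Gamma\ltimes X$ since $H^{(0)}\subseteq X=G^{(0)}$ and the source/range maps are $\Gamma$-equivariant. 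No norm estimates or quantitative $K$-theory enter here; the argument is purely groupoid-combinatorial.
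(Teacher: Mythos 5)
Your reduction to checking generators is fine, and the first half (that $H^{+r_1}\subseteq H^{+(r_1+r_2)}$ because $r_1\leq r_1+r_2$) is correct. But the second half rests on the claim $(H^{+r_1})^{(0)}=H^{(0)}$, and this is false in general. By the paper's definition of subgroupoid, if $(gx,g,x)$ lies in a subgroupoid then both units $(x,e,x)$ and $(gx,e,gx)$ do too; hence the unit space of $H^{+r_1}$ is $H^{(0)}$ together with all points $gx$ where $(gx,g,x)\in G$, $x\in H^{(0)}$, $l(g)\leq r_1$ — typically a strictly larger set, since $H^{(0)}$ is not $\Gamma$-invariant (think of $H=\{(x,e,x):x\in U\}$ for an open set $U$ that is not invariant). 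Your parenthetical justification is circular: the fact that $(gx,e,gx)$ is ``forced into the subgroupoid'' shows $gx\in (H^{+r_1})^{(0)}$, not $gx\in H^{(0)}$. With the claim gone, a generator $(gx,g,x)$ of $(H^{+r_1})^{+r_2}$ whose source $x$ is a new unit of $H^{+r_1}$ (i.e.\ $x\notin H^{(0)}$) need not be a generator of $H^{+(r_1+r_2)}$, so your argument does not cover it. Indeed, if your claim were true the lemma would hold with the stronger bound $\max(r_1,r_2)$ in place of $r_1+r_2$; the additivity in the statement is exactly the footprint of the growing unit space.

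The paper's proof fills precisely this gap: given $(gx,g,x)\in G$ with $x\in (H^{+r_1})^{(0)}$ and $l(g)\leq r_2$, choose $h\in\Gamma$ with $l(h)\leq r_1$ and $hx\in H^{(0)}$ (possible by the description of $(H^{+r_1})^{(0)}$, using $l(h^{-1})=l(h)$), and then factor
\[
(gx,g,x)=(gx,gh^{-1},hx)(hx,h,x),
\]
where both factors lie in $H^{+(r_1+r_2)}$: the second because its inverse $(x,h^{-1},hx)$ is a generator with source $hx\in H^{(0)}$ and $l(h^{-1})\leq r_1+r_2$, the first because it is in $G$ (a product of elements of $G$), has source $hx\in H^{(0)}$, and $l(gh^{-1})\leq l(g)+l(h)\leq r_1+r_2$. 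You should replace the unit-space identity with this composition argument; the rest of your outline then goes through.
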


\begin{proof}
It suffices to show that \[\{(gx,g,x)\in G:x\in (H^{+r_1})^{(0)},l(g)\leq r_2\}\subseteq H^{+(r_1+r_2)}.\] Pick such an element $(gx,g,x)$. There exists $h\in\Gamma$ with $l(h)\leq r_1$ and $hx\in H^{(0)}$. Thus $(gx,gh^{-1},hx)$ and $(hx,h,x)$ are in $H^{+(r_1+r_2)}$ so $(gx,g,x)=(gx,gh^{-1},hx)(hx,h,x)\in H^{+(r_1+r_2)}$.
\end{proof}

Now we recall the definition of dynamical complexity for transformation groupoids. We refer the reader to \cite[Appendix A]{GWY2} for a definition applicable to general \'{e}tale groupoids.

\begin{defn}\cite{GWY2} \label{FDCbasic}
Let $\Gamma\curvearrowright X$ be an action, let $G$ be an open subgroupoid of $\Gamma\ltimes X$, and let $\mathcal{C}$ be a set of open subgroupoids of $\Gamma\ltimes X$. 
We say that $G$ is decomposable over $\mathcal{C}$ if for all $r\geq 0$ there exists an open cover $G^{(0)}=U_0\cup U_1$ of the unit space of $G$ such that for each $i\in\{0,1\}$ the subgroupoid of $G$ generated by
\[ \{ (gx,g,x)\in G:x\in U_i,l(g)\leq r\} \]
is in $\mathcal{C}$.

An open subgroupoid of $\Gamma\ltimes X$ is said to have finite dynamical complexity if it is contained in the smallest class $\mathcal{D}$ of open subgroupoids of $\Gamma\ltimes X$ that contains all relatively compact open subgroupoids and is closed under decomposability (i.e., if $G$ decomposes over $\mathcal{D}$, then $G$ is in $\mathcal{D}$).

The action $\Gamma\curvearrowright X$ is said to have finite dynamical complexity if $\Gamma\ltimes X$ has finite dynamical complexity.
\end{defn}

The following is a slight variation of the definition.

\begin{defn}\cite{GWY2}
Let $\Gamma\curvearrowright X$ be an action, let $G$ be an open subgroupoid of $\Gamma\ltimes X$, and let $\mathcal{C}$ be a set of open subgroupoids of $\Gamma\ltimes X$. 
We say that $G$ is strongly decomposable over $\mathcal{C}$ if for all $r\geq 0$ there exists an open cover $G^{(0)}=U_0\cup U_1$ of the unit space of $G$ such that for each $i\in\{0,1\}$, if $G_i$ is the subgroupoid of $G$ generated by
\[ \{ (gx,g,x)\in G:x\in U_i,l(g)\leq r\}, \]
then $G_i^{+r}$ (with expansion taken relative to $G$) is in $\mathcal{C}$.

Denote by $\mathcal{D}_s$ the smallest class of open subgroupoids of $\Gamma\ltimes X$ that contains all relatively compact open subgroupoids and is closed under strong decomposability (i.e., if $G$ is strongly decomposable over $\mathcal{D}_s$, then $G$ is in $\mathcal{D}_s$).
\end{defn}

The following lemma describes properties of finite dynamical complexity that we will use later. We refer the reader to \cite[Lemma 3.16]{GWY2} for the proof.

\begin{lem} \label{FDCstrong} 
Let $\Gamma\curvearrowright X$ be an action.
\begin{enumerate}
\item If $G$ is an open subgroupoid of $\Gamma\ltimes X$ that is in $\mathcal{D}$ (resp. $\mathcal{D}_s$), then any open subgroupoid of $G$ is also in $\mathcal{D}$ (resp. $\mathcal{D}_s$).
\item $\mathcal{D}=\mathcal{D}_s$.
\end{enumerate}
\end{lem}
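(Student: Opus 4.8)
The plan is to prove both parts by the standard ``minimal-class'' argument: to verify a property for every member of $\mathcal{D}$ (or $\mathcal{D}_s$) it suffices to produce a class of open subgroupoids that enjoys the property, contains all relatively compact open subgroupoids, and is closed under decomposability (resp.\ strong decomposability); minimality of $\mathcal{D}$ (resp.\ $\mathcal{D}_s$) then forces the inclusion. Throughout one uses the fact recalled just before Definition \ref{FDCbasic}, that a subgroupoid generated by an open subset of $\Gamma\ltimes X$ is open, so that all the generated and expanded subgroupoids below are open subgroupoids of $\Gamma\ltimes X$.

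For part (i), let $\mathcal{E}$ be the class of open subgroupoids $G$ such that every open subgroupoid of $G$ lies in $\mathcal{D}$, and define $\mathcal{E}_s$ analogously with $\mathcal{D}_s$. If $G$ is relatively compact and $H\subseteq G$ is an open subgroupoid, then $\overline{H}\subseteq\overline{G}$ is compact, so $H$ is relatively compact, hence in $\mathcal{D}$; thus $\mathcal{E}$ contains all relatively compact open subgroupoids. For closure under decomposability, suppose $G$ decomposes over $\mathcal{E}$ and let $H\subseteq G$ be an arbitrary open subgroupoid. Given $r\ge 0$, take the cover $G^{(0)}=U_0\cup U_1$ provided by decomposability, and put $V_i=U_i\cap H^{(0)}$, an open cover of $H^{(0)}$. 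The subgroupoid of $H$ generated by $\{(gx,g,x)\in H:x\in V_i,\,l(g)\le r\}$ is an open subgroupoid of the corresponding $G_i\in\mathcal{E}$, since the generating sets are nested (as $H\subseteq G$ and $V_i\subseteq U_i$); hence it lies in $\mathcal{D}$, so $H$ decomposes over $\mathcal{D}$ and $H\in\mathcal{D}$. Therefore $G\in\mathcal{E}$, and minimality gives $\mathcal{D}\subseteq\mathcal{E}$, which is the claim. The same argument works for $\mathcal{D}_s$, with the one extra check that the expansion $H_i^{+r}$ taken relative to $H$ is an open subgroupoid of $G_i^{+r}$ taken relative to $G$; this is again immediate from nesting of generating sets, using $H_i\subseteq G_i$ and hence $H_i^{(0)}\subseteq G_i^{(0)}$.

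For part (ii), the inclusion $\mathcal{D}_s\subseteq\mathcal{D}$ follows once we know $\mathcal{D}$ is closed under strong decomposability: if $G$ strongly decomposes over $\mathcal{D}$ then for each $r$ we get a cover $U_0\cup U_1$ with $G_i^{+r}\in\mathcal{D}$; since $G_i$ is an open subgroupoid of $G_i^{+r}$, part (i) gives $G_i\in\mathcal{D}$, so $G$ decomposes over $\mathcal{D}$ and $G\in\mathcal{D}$. The reverse inclusion $\mathcal{D}\subseteq\mathcal{D}_s$ is the main obstacle: we must show $\mathcal{D}_s$ is closed under decomposability. Suppose $G$ decomposes over $\mathcal{D}_s$ and fix $r\ge 0$. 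Apply decomposability at the larger scale $2r$ to get a cover $G^{(0)}=U_0\cup U_1$ with $H_i:=\langle\{(gx,g,x)\in G:x\in U_i,\,l(g)\le 2r\}\rangle\in\mathcal{D}_s$, and set $G_i:=\langle\{(gx,g,x)\in G:x\in U_i,\,l(g)\le r\}\rangle$. The crucial claim is that $G_i^{+r}$ (relative to $G$) is contained in $H_i$. To see this one analyses the unit space of $G_i$: writing any element of $G_i$ as a composite of generators and their inverses and tracking sources and ranges along the composite shows that each unit of $G_i$ has the form $gy$ with $y\in U_i$, $l(g)\le r$, and $(gy,g,y)\in G_i$. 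Then, given an extra generator $(gx,g,x)\in G$ of $G_i^{+r}$ with $x\in G_i^{(0)}$ and $l(g)\le r$, write $x=hy$ with $(hy,h,y)\in G_i$ and $l(h)\le r$; the product $(gx,gh,y)=(gx,g,x)(hy,h,y)$ lies in $G$, has source $y\in U_i$ and satisfies $l(gh)\le 2r$, so it is a generator of $H_i$, and therefore $(gx,g,x)=(gx,gh,y)(hy,h,y)^{-1}\in H_i$. Hence $G_i^{+r}\subseteq H_i$, and since $G_i^{+r}$ is an open subgroupoid of $H_i\in\mathcal{D}_s$, part (i) gives $G_i^{+r}\in\mathcal{D}_s$. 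Thus the cover $U_0\cup U_1$ witnesses strong decomposability of $G$ over $\mathcal{D}_s$ at scale $r$; as $r$ was arbitrary, $G\in\mathcal{D}_s$, and minimality yields $\mathcal{D}\subseteq\mathcal{D}_s$.

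The only genuinely non-formal ingredient is the unit-space analysis in the last step, i.e.\ showing that every unit of the subgroupoid generated at scale $r$ over $U_i$ is within group-distance $r$ of $U_i$, from which the containment $G_i^{+r}\subseteq H_i$ and the passage from decomposability to strong decomposability follow; everything else is bookkeeping with nested generating sets together with the two minimality principles.
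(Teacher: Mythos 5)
Your proof is correct, and it follows essentially the standard route: the paper itself does not prove Lemma \ref{FDCstrong} but defers to \cite[Lemma 3.16]{GWY2}, whose argument is the same minimality scheme you use (pass to the auxiliary class of groupoids all of whose open subgroupoids lie in $\mathcal{D}$, resp. $\mathcal{D}_s$, for (i), and for (ii) obtain strong decomposability from decomposability at the doubled scale via the containment $G_i^{+r}\subseteq H_i$). The one step you rightly flag as non-formal — that every unit of $G_i$ is of the form $gy$ with $y\in U_i$, $l(g)\leq r$ and $(gy,g,y)\in G_i$ — does hold, since every element of $G_i$ is a composable product of generators and their inverses and its source is the source of the rightmost factor.
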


\section{Quantitative $K$-theory}

In this section, we recall some definitions and facts from our framework of quantitative (or controlled) $K$-theory in \cite{Chung1}.
\begin{defn}
A filtered Banach algebra is a Banach algebra $A$ with a family $(A_r)_{r\geq 0}$ of linear subspaces such that
\begin{enumerate}
\item $A_{r_1}\subset A_{r_2}$ if $r_1\leq r_2$;
\item $A_{r_1}A_{r_2}\subset A_{r_1+r_2}$ for all $r_1,r_2\geq 0$;
\item $\bigcup_{r\geq 0}A_r$ is dense in $A$.
\end{enumerate}
If $A$ is unital with unit $1_A$, then we require $1_A\in A_r$ for all $r\geq 0$.
\end{defn}

We showed in Lemma \ref{filtlem} that if $G$ is an open subgroupoid of $\Gamma\ltimes X$ and $s\geq 0$, then $A^s(G)$ is a filtered Banach algebra with filtration \[A^s(G)_r=\{a\in\mathbb{C}_{L,0}[G;s]:\mathrm{prop}_\Gamma(a(t))\leq r\;\text{for all}\;t\}.\]

We will write $\tilde{A}$ for the algebra obtained from $A$ by adjoining a unit if $A$ is non-unital, and $A$ itself if $A$ is already unital.

\begin{defn}
Let $A$ be a filtered Banach algebra. For $0<\varepsilon<\frac{1}{20}$, $r\geq 0$, and $N\geq 1$,
\begin{enumerate}
\item an element $e\in A$ is called an $(\varepsilon,r,N)$-idempotent if $||e^2-e||<\varepsilon$, $e\in A_r$, and $\max(||e||,||1_{\tilde{A}}-e||)\leq N$.
\item if $A$ is unital, an element $u\in A$ is called an $(\varepsilon,r,N)$-invertible if $u\in A_r$, $||u||\leq N$, and there exists $v\in A_r$ with $||v||\leq N$ such that $\max(||uv-1||,||vu-1||)<\varepsilon$.
\end{enumerate}
\end{defn}

\begin{defn}
Let $A$ be a filtered Banach algebra.
\begin{enumerate}
\item Two $(\varepsilon,r,N)$-idempotents $e_0$ and $e_1$ in $A$ are $(\varepsilon',r',N')$-homotopic for some $\varepsilon'\geq\varepsilon$, $r'\geq r$, and $N'\geq N$ if there exists a norm-continuous path $(e_t)_{t\in[0,1]}$ of $(\varepsilon',r',N')$-idempotents in $A$ from $e_0$ to $e_1$. 
Equivalently, there is an $(\varepsilon',r',N')$-idempotent $e\in C([0,1],A)$ such that $e(0)=e_0$ and $e(1)=e_1$.
\item If $A$ is unital, two $(\varepsilon,r,N)$-invertibles $u_0$ and $u_1$ in $A$ are $(\varepsilon',r',N')$-homotopic for some $\varepsilon'\geq\varepsilon$, $r'\geq r$, and $N'\geq N$ if there exists an $(\varepsilon',r',N')$-invertible $u$ in $C([0,1],A)$ with $u(0)=u_0$ and $u(1)=u_1$. In this case, we get a norm-continuous path $(u_t)_{t\in[0,1]}$ of $(\varepsilon',r',N')$-invertibles in $A$ from $u_0$ to $u_1$ by setting $u_t=u(t)$.
\end{enumerate}
\end{defn}

Using these notions of homotopy, we may consider the following equivalence relations on the $(\varepsilon,r,N)$-idempotents and $(\varepsilon,r,N)$-invertibles in $M_\infty(A)$.

\begin{defn}
Let $A$ be a filtered Banach algebra.
\begin{enumerate}
\item Two $(\varepsilon,r,N)$-idempotents in $M_\infty(A)$ are said to be equivalent if they are $(4\varepsilon,r,4N)$-homotopic.
\item If $A$ is unital, two $(\varepsilon,r,N)$-invertibles in $M_\infty(A)$ are said to be equivalent if they are $(4\varepsilon,2r,4N)$-homotopic.
\end{enumerate}
\end{defn}

For a unital Banach algebra $A$, the operation $[e]+[f]:=\left[\begin{pmatrix} e & 0 \\ 0 & f \end{pmatrix}\right]$ makes the set of equivalence classes of $(\varepsilon,r,N)$-idempotents in $M_\infty(A)$ an abelian semigroup. The Grothendieck group of this abelian semigroup is defined to be $K_0^{\varepsilon,r,N}(A)$.
Also, the operation $[u]+[v]:=\left[\begin{pmatrix} u & 0 \\ 0 & v \end{pmatrix}\right]$ makes the set of equivalence classes of $(\varepsilon,r,N)$-invertibles in $M_\infty(A)$ an abelian group, and this is defined to be $K_1^{\varepsilon,r,N}(A)$.

For a non-unital Banach algebra $A$, consider the unitization $\tilde{A}$ of $A$, and define $K_*^{\varepsilon,r,N}(A)$ to be the kernel of the map $K_*^{\varepsilon,r,N}(\tilde{A})\rightarrow K_*^{\varepsilon,r,N}(\mathbb{C})$ induced by the canonical homomorphism $\tilde{A}\rightarrow\mathbb{C}$, where $\mathbb{C}$ is given the trivial filtration.

We refer the reader to \cite[Section 3]{Chung1} for details.

There are canonical homomorphisms \[\iota_*^{\varepsilon,\varepsilon',r,r',N,N'}:K_*^{\varepsilon,r,N}(A)\rightarrow K_*^{\varepsilon',r',N'}(A)\] for $0<\varepsilon\leq\varepsilon'<\frac{1}{20}$, $0\leq r\leq r'$, and $1\leq N\leq N'$, which we may think of as relaxation of control maps or inclusion maps.

If $e$ is an $(\varepsilon,r,N)$-idempotent in a unital filtered Banach algebra $A$, then we may apply the holomorphic functional calculus to get an idempotent $c_0(e)\in A$. This gives us a group homomorphism \[c_0:K_0^{\varepsilon,r,N}(A)\rightarrow K_0(A).\] Also, every $(\varepsilon,r,N)$-invertible is actually invertible so we have a group homomorphism \[c_1:K_1^{\varepsilon,r,N}(A)\rightarrow K_1(A)\] given by $[u]_{\varepsilon,r,N}\mapsto[u]$. We sometimes refer to these homomorphisms as comparison maps.
These homomorphisms allow us to pass from quantitative $K$-theory to standard $K$-theory. 
To pass from standard $K$-theory to quantitative $K$-theory, we may use the following two propositions.

\begin{prop}\cite[Proposition 3.20]{Chung1} \leavevmode \label{qKtoKsurj}
\begin{enumerate}
\item Let $A$ be a filtered $L^p$ operator algebra. Let $f$ be an idempotent in $M_n(\tilde{A})$, and let  $0<\varepsilon<\frac{1}{20}$. Then there exist $r\geq 0$ and $[e]\in K_0^{\varepsilon,r,||f||+1}(A)$ with $e$ an $(\varepsilon,r,||f||+1)$-idempotent in $M_n(\tilde{A})$ such that $c_0([e])=[f]$ in $K_0(A)$. 
\item Let $A$ be a filtered $L^p$ operator algebra. Let $u$ be an invertible element in $M_n(\tilde{A})$, and let $0<\varepsilon<\frac{1}{20}$. Then there exist $r\geq 0$ and $[v]\in K_1^{\varepsilon,r,||u||+||u^{-1}||+1}(A)$ with $v$ an $(\varepsilon,r,||u||+||u^{-1}||+1)$-invertible in $M_n(\tilde{A})$ such that $c_1([v])=[u]$ in $K_1(A)$.
\end{enumerate}
\end{prop}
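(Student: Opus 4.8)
The plan is to derive both parts from the density axiom~(iii) in the definition of a filtered Banach algebra, together with the standard perturbation lemmas for idempotents and invertibles in a Banach algebra and the fact that the comparison maps $c_0,c_1$ are built out of the holomorphic functional calculus. Throughout I write $\tilde{A}_r:=A_r+\mathbb{C}1_{\tilde{A}}$, so that $\bigcup_{r\ge 0}\tilde{A}_r$ is dense in $\tilde{A}$ and hence $\bigcup_{r\ge 0}M_n(\tilde{A}_r)$ is dense in $M_n(\tilde{A})$. Since unitization and passage to matrices only affect the scalar part of an element, this density allows one to approximate while keeping scalar parts fixed, which is what is needed for the approximants below to carry $K$-theory classes.

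For part~(1) I would fix a tolerance $\delta>0$, to be pinned down at the end in terms of $\varepsilon$ and $\|f\|$, and use density to choose $r\ge 0$ and $e\in M_n(\tilde{A}_r)$ with $\|e-f\|<\delta$, approximating only the $M_n(A)$-component of $f$ so that $\pi(e)=\pi(f)$. Since $f^2=f$, one has $\|e^2-e\|=\|e^2-f^2-(e-f)\|\le\|e-f\|(\|e\|+\|f\|+1)\le\delta(2\|f\|+\delta+1)$, which is $<\varepsilon$ for small $\delta$, and $\|e\|\le\|f\|+\delta$. So $e$ is an $(\varepsilon,r,\|f\|+1)$-idempotent as soon as one also has $\|1_{\tilde{A}}-e\|\le\|f\|+1$, and it then defines a class $[e]\in K_0^{\varepsilon,r,\|f\|+1}(A)$. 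To identify $c_0([e])$ I would argue that smallness of $\|e^2-e\|$ confines the spectrum of $e$ to two small disks around $0$ and $1$, so the holomorphic functional calculus yields a genuine idempotent $c_0(e)$ with $\|c_0(e)-e\|$ --- hence $\|c_0(e)-f\|$ --- small; a standard perturbation lemma then produces $w\in\mathrm{GL}_n(\tilde{A})$ with $w\,c_0(e)\,w^{-1}=f$, so $[c_0(e)]=[f]$ in $K_0(A)$, and by definition of the comparison map $c_0([e])=[c_0(e)]=[f]$.

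Part~(2) I expect to run more smoothly, since the extra $+1$ in the control parameter leaves ample room. I would choose $r\ge 0$ and $v,w\in M_n(\tilde{A}_r)$ with $\|v-u\|<\delta$ and $\|w-u^{-1}\|<\delta$. Then $\|v\|\le\|u\|+\delta$ and $\|w\|\le\|u^{-1}\|+\delta$ are each at most $\|u\|+\|u^{-1}\|+1$ once $\delta<1$, while $\|vw-1\|=\|(v-u)w+u(w-u^{-1})\|\le\|v-u\|\|w\|+\|u\|\|w-u^{-1}\|$, and the analogous bound for $\|wv-1\|$, are both $<\varepsilon$ for small $\delta$. Hence $v$ is an $(\varepsilon,r,\|u\|+\|u^{-1}\|+1)$-invertible with quasi-inverse $w$, giving $[v]\in K_1^{\varepsilon,r,\|u\|+\|u^{-1}\|+1}(A)$; and choosing $\delta<\|u^{-1}\|^{-1}$ makes $v$ genuinely invertible with the straight-line path from $u$ to $v$ lying in $\mathrm{GL}_n(\tilde{A})$, so $[v]=[u]$ in $K_1(A)$ and $c_1([v])=[v]=[u]$.

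The step I expect to be the genuine obstacle is the one flagged in part~(1): pushing the norm-control constant down to exactly $\|f\|+1$, i.e.\ controlling $\|1_{\tilde{A}}-e\|$. The crude estimate $\|1_{\tilde{A}}-e\|\le\|1_{\tilde{A}}-f\|+\delta\le 1+\|f\|+\delta$ overshoots, because for idempotents in $L^p$ operator algebras $\|1_{\tilde{A}}-f\|$ can actually equal $1+\|f\|$ --- already in $M_2$ with its $\ell^1$ operator norm. The remedy I would pursue is to replace $f$ at the outset by a suitably chosen similar --- hence $K_0(A)$-equivalent --- idempotent that is better balanced, so that the naive perturbation estimate does suffice; making this reduction precise is, I expect, the crux of the argument. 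Everything else is a routine marriage of density with the standard Banach-algebraic perturbation lemmas, carried out with enough bookkeeping to keep all of the control data $(\varepsilon,r,N)$ where it needs to be.
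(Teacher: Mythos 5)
A preliminary remark: the paper itself contains no proof of this proposition --- it is imported verbatim from the author's earlier work \cite[Proposition 3.20]{Chung1} --- so your proposal can only be compared with the standard argument that citation contains, and your route (approximate inside $\bigcup_{r}M_n(\tilde{A}_r)$ keeping scalar parts fixed, then identify the image under the comparison map via holomorphic functional calculus and the similarity lemma for nearby idempotents) is exactly that argument. Your part (2) is complete as written: the generous constant $\|u\|+\|u^{-1}\|+1$ absorbs the approximation error $\delta$, the pair $(v,w)$ witnesses the $(\varepsilon,r,N)$-invertibility, and for $\delta<\|u^{-1}\|^{-1}$ the straight-line path from $u$ to $v$ stays invertible, so $c_1([v])=[u]$.

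The genuine gap is the one you flag yourself in part (1), and the remedy you sketch does not close it. You need $e\in M_n(\tilde{A}_r)$ with $\max(\|e\|,\|1_{\tilde{A}}-e\|)\leq\|f\|+1$, and since $\|1_{\tilde{A}}-f\|$ can equal $\|f\|+1$ (your $\ell^1$ example is correct), the naive bound $\|1_{\tilde{A}}-e\|\leq\|1_{\tilde{A}}-f\|+\delta$ overshoots. Replacing $f$ by a similar idempotent $f'$ does not obviously help: the approximation would then only produce an $(\varepsilon,r,\|f'\|+1)$-idempotent, so you would need $f'\in M_n(\tilde{A})$ with \emph{both} $\|f'\|$ and $\|1_{\tilde{A}}-f'\|$ strictly below $\|f\|+1$, and you give no argument that such a representative exists; as you concede, this ``crux'' is left unproven, so the exact norm control claimed in the statement is not established. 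A repair that stays inside your own construction: for a nonzero idempotent $\|f\|\geq 1$, so after choosing $e$ with $\|e-f\|<\delta$ you may replace it by $\tfrac{1}{1+\delta}e$, which still lies in $M_n(\tilde{A}_r)$ and satisfies $\bigl\|1_{\tilde{A}}-\tfrac{1}{1+\delta}e\bigr\|\leq\tfrac{1}{1+\delta}\|1_{\tilde{A}}-e\|+\tfrac{\delta}{1+\delta}\leq\tfrac{\|f\|+1+2\delta}{1+\delta}\leq\|f\|+1$ and $\bigl\|\tfrac{1}{1+\delta}e\bigr\|\leq\tfrac{\|f\|+\delta}{1+\delta}\leq\|f\|$, while $\bigl\|\bigl(\tfrac{1}{1+\delta}e\bigr)^2-\tfrac{1}{1+\delta}e\bigr\|$ and $\bigl\|\tfrac{1}{1+\delta}e-f\bigr\|$ remain of order $\delta$ with constants depending only on $\|f\|$; hence for $\delta$ small your functional-calculus/similarity identification $c_0([e])=[f]$ goes through unchanged (the case $f=0$ is trivial, and one should attend to the scalar-part conventions in the definition of $K_0^{\varepsilon,r,N}$ of a non-unital algebra, e.g.\ by applying the rescaling only to the $M_n(A)$-component). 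With such an adjustment your argument proves the statement; without it, part (1) is incomplete at precisely the delicate point.
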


\begin{prop}\cite[Proposition 3.21]{Chung1} \leavevmode \label{qKtoKinj}
\begin{enumerate}
\item There exists a non-decreasing function $P:[1,\infty)\rightarrow[1,\infty)$ such that for any filtered $L^p$ operator algebra $A$, if $0<\varepsilon<\frac{1}{20P(N)}$, and $[e]_{\varepsilon,r,N},[f]_{\varepsilon,r,N}\in K_0^{\varepsilon,r,N}(A)$ are such that $c_0([e])=c_0([f])$ in $K_0(A)$, then there exist $r'\geq r$ and $N'\geq N$ such that in $K_0^{P(N)\varepsilon,r',N'}(A)$ we have $[e]_{P(N)\varepsilon,r',N'}=[f]_{P(N)\varepsilon,r',N'}$.
\item Let $A$ be a filtered $L^p$ operator algebra. Suppose that $0<\varepsilon<\frac{1}{20}$, and $[u]_{\varepsilon,r,N},[v]_{\varepsilon,r,N}\in K_1^{\varepsilon,r,N}(A)$ are such that $c_1([u])=c_1([v])$ in $K_1(A)$. Then there exist $r'\geq r$ and $N'\geq N$ such that in $K_1^{\varepsilon,r',N'}(A)$ we have $[u]_{\varepsilon,r',N'}=[v]_{\varepsilon,r',N'}$.
\end{enumerate}
\end{prop}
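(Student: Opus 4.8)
The plan is to transport the ordinary $K$-theory equalities back into the controlled picture while tracking exactly how the control data $(\varepsilon,r,N)$ must be relaxed. I would dispose of (ii) first, as it is cleaner and explains why no function $P$ is needed there.

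For (ii): the equality $c_1([u])=c_1([v])$ says precisely that $u$ and $v$ give the same class in $K_1(A)$, so after replacing $u$ and $v$ by $u\oplus 1_k$ and $v\oplus 1_k$ for a suitable $k$ (which alters neither the controlled nor the ordinary $K_1$-class), there is a norm-continuous path $(w_t)_{t\in[0,1]}$ of honest invertibles in $M_{n+k}(\tilde A)$ from $u\oplus 1_k$ to $v\oplus 1_k$. The decisive point is that the two controlled invertibles sit \emph{on} this path. I would subdivide $[0,1]$ finely, put $w_0:=u\oplus 1_k$ and $w_m:=v\oplus 1_k$, and at the interior nodes use density of $\bigcup_\rho A_\rho$ to pick finite-propagation invertibles $w_j'$ close to $w_{t_j}$, each equipped with a finite-propagation quantitative inverse obtained by approximating $w_{t_j}^{-1}$. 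Since the endpoints lie on the path, every jump $\|w_j'-w_{j+1}'\|$ can be made as small as desired, so each straight-line segment $s\mapsto(1-s)w_j'+sw_{j+1}'$ runs through $(\varepsilon,r',N')$-invertibles, with $N'$ controlled by $\sup_t(\|w_t\|+\|w_t^{-1}\|)$ and $r'$ large but finite; concatenating them shows $u\oplus 1_k$ and $v\oplus 1_k$, hence $u$ and $v$, are $(\varepsilon,r',N')$-homotopic, so $[u]=[v]$ in $K_1^{\varepsilon,r',N'}(A)$ with no loss in $\varepsilon$.

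For (i): holomorphic functional calculus turns the controlled idempotents $e,f$ into honest idempotents $c_0(e),c_0(f)$, and the key quantitative input is the estimate $\|e-c_0(e)\|\lesssim N\varepsilon$ (and likewise for $f$), which comes from a resolvent bound on a contour of radius $\sim\varepsilon$ around $\{0,1\}$ using $\|e^2-e\|<\varepsilon$ and $\|e\|,\|1-e\|\leq N$; this is the sole genuinely $N$-dependent ingredient. The hypothesis $c_0([e])=c_0([f])$ says that $c_0(e)$ and $c_0(f)$ represent the same class in $K_0(A)$ after scalar parts are accounted for, so after stabilizing by suitable scalar idempotents and a copy of $1_k\oplus 0_l$ I may assume there is a homotopy $(P_t)_{t\in[0,1]}$ of honest idempotents in some $M_{n'}(\tilde A)$ from $\tilde E$ (the corresponding stabilization of $c_0(e)$) to $\tilde F$ (that of $c_0(f)$), whose companion controlled idempotents $E,F$ are $(\varepsilon,r,N)$-idempotents with $\|E-\tilde E\|,\|F-\tilde F\|\lesssim N\varepsilon$ and with $[E]=[e]$, $[F]=[f]$ in the relevant controlled $K_0$-groups. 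I would then subdivide $[0,1]$ finely and, using density once more, interpolate from $\tilde E$ to $\tilde F$ through finitely many finite-propagation idempotents $Q_0=E,Q_1,\dots,Q_{m-1},Q_m=F$, where $Q_1,\dots,Q_{m-1}$ have arbitrarily small idempotent defect and every jump $\|Q_j-Q_{j+1}\|$ is as small as we wish \emph{except} at the two ends, where only $\|Q_0-Q_1\|,\|Q_{m-1}-Q_m\|\lesssim N\varepsilon$ holds. A direct computation of $\|G_s^2-G_s\|$ for the straight line $G_s=(1-s)Q_j+sQ_{j+1}$ gives defect $\lesssim\varepsilon+N\|Q_j-Q_{j+1}\|$, hence $\lesssim N^2\varepsilon$ at the ends and as small as we please in between, so concatenating exhibits $E$ and $F$ as $(P(N)\varepsilon,r',N')$-homotopic for an explicit $P$ that may be taken to grow quadratically; thus $[e]=[f]$ in $K_0^{P(N)\varepsilon,r',N'}(A)$, and the hypothesis $\varepsilon<\frac1{20P(N)}$ is precisely what keeps $P(N)\varepsilon$ inside the admissible range $(0,\frac1{20})$.

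The main obstacle, and the reason the statement is more than a formal manipulation, is that ordinary $K$-theory furnishes homotopies with no control at all on propagation or on the norms of the interpolating elements, whereas we must produce a controlled homotopy whose $\varepsilon$-degradation depends only on $N$. Uncontrolled propagation is absorbed by the density axiom of a filtered Banach algebra, at the price of an arbitrarily large but finite $r'$. The delicate point is the $\varepsilon$-degradation: I would confine it to the two endpoints of the interpolating chain, where the unavoidable gap $\|e-c_0(e)\|\lesssim N\varepsilon$ resides, so that it does not accumulate over the (homotopy-dependent) number of subdivisions and remains governed by $N$ alone. In the $K_1$ case this endpoint gap disappears because the controlled invertibles are genuine invertibles lying on the path, which is exactly why (ii) needs no factor $P(N)$.
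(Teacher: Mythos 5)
Your proposal is essentially the argument behind the cited result: the paper itself gives no proof of Proposition \ref{qKtoKinj}, importing it from \cite[Proposition 3.21]{Chung1}, and the proof there proceeds exactly as you do --- lift the ordinary $K$-theory homotopy, subdivide and approximate by finite-propagation quasi-idempotents/quasi-invertibles via the density axiom, and absorb the only genuinely $N$-dependent loss (the gap $\|e-c_0(e)\|\lesssim N\varepsilon$ from holomorphic functional calculus) at the endpoints, which is also why the $K_1$ case needs no factor $P(N)$. So your reconstruction is correct and matches the same approach; no further comment is needed.
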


\begin{defn}
A control pair is a pair $(\lambda,h)$, where
\begin{enumerate}
\item $\lambda:[1,\infty)\rightarrow[1,\infty)$ is a non-decreasing function;
\item $h:(0,\frac{1}{20})\times[1,\infty)\rightarrow[1,\infty)$ is a function such that $h(\cdot,N)$ is non-increasing for fixed $N$. \end{enumerate}
We will write $\lambda_N$ for $\lambda(N)$, and $h_{\varepsilon,N}$ for $h(\varepsilon,N)$.

Given two control pairs $(\lambda,h)$ and $(\lambda',h')$, we write $(\lambda,h)\leq(\lambda',h')$ if $\lambda_N\leq \lambda'_N$ and $h_{\varepsilon,N}\leq h'_{\varepsilon,N}$ for all $\varepsilon\in(0,\frac{1}{20})$ and $N\geq 1$.
\end{defn}

In \cite[Section 5]{Chung1}, we showed the existence of a controlled Mayer-Vietoris sequence for a pair of subalgebras satisfying certain conditions that, roughly speaking, control the decomposition of elements in the larger algebra into sums of elements in the subalgebras and also controls approximation of elements in the subalgebras by elements in the intersection. Here, in order to slightly simplify the statement, we use less general hypotheses (by considering pairs of ideals and omitting certain parameters) that suffice for our application. On the other hand, we also give ourselves a bit more flexibility in terms of propagation control. One can check that the proofs in \cite{Chung1} carry over with appropriate adjustments of the propagation parameter $r$. In \cite{GWY2} there is a slightly different approach for obtaining a controlled Mayer-Vietoris sequence in the $C^\ast$-algebra setting. 

\begin{defn} \label{MVpair}
Let $A$ be a filtered $L^p$ operator algebra with filtration $(A_r)_{r\geq 0}$. A pair $(I,J)$ of closed ideals of $A$ is a controlled Mayer-Vietoris pair for $A$ if it satisfies the following conditions:
\begin{enumerate}
\item There exists $\rho:[0,\infty)\rightarrow[0,\infty)$ with $\rho(r)\geq r$ such that for any $r\geq 0$, any positive integer $n$, and any $x\in M_n(A_r)$, there exist $x_1\in M_n(I\cap A_{\rho(r)})$ and $x_2\in M_n(J\cap A_{\rho(r)})$ such that $x=x_1+x_2$ and $\max(||x_1||,||x_2||)\leq ||x||$;
\item $I$ and $J$ have filtrations $(I\cap A_r)_{r\geq 0}$ and $(J\cap A_r)_{r\geq 0}$ respectively;
\item For any $r\geq 0$, any $\varepsilon>0$, any positive integer $n$, any $x\in M_n(I\cap A_r)$ and $y\in M_n(J\cap A_r)$ with $||x-y||<\varepsilon$, there exists $z\in M_n(I\cap J\cap A_{\rho(r)})$ such that $\max(||z-x||,||z-y||)<\varepsilon$, where $\rho$ is as above.
\end{enumerate}
\end{defn}

\begin{thm}\cite[Definition 5.12 and Theorem 5.14]{Chung1} \label{MVthm}
There exist control pairs $(\lambda,h)\leq(\lambda',h')$ such that for any filtered $L^p$ operator algebra $A$ and any controlled Mayer-Vietoris pair $(I,J)$ for $A$, if $x\in K_1^{\varepsilon,r,N}(A)$, then there exists \[\partial_c(x)\in K_0^{\lambda_N\varepsilon,h_{\varepsilon,N}r,\lambda_N}(I\cap J)\] such that if $\partial_c(x)=0$, then there exist $a\in K_0^{\lambda'_N\varepsilon,h'_{\varepsilon,N}r,\lambda'_N}(I)$ and $b\in K_0^{\lambda'_N\varepsilon,h'_{\varepsilon,N}r,\lambda'_N}(J)$ such that $x=a+b$ in $K_0^{\lambda'_N\varepsilon,h'_{\varepsilon,N}r,\lambda'_N}(A)$.
\end{thm}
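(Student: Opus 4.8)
The plan is to adapt the controlled Mayer--Vietoris construction of \cite[\S5]{Chung1} --- itself modeled on the $C^*$-algebraic controlled Mayer--Vietoris sequence of Oyono-Oyono and Yu \cite{OY15,OY} --- while tracking how each step dilates the propagation $r$ and the tolerance $\varepsilon$. The three clauses of Definition~\ref{MVpair} serve as controlled substitutes for the classical Mayer--Vietoris hypotheses: clause (i) is a controlled ``$A=I+J$'' with propagation dilation $\rho$ and no loss of norm; clause (ii) says $I$ and $J$ inherit the filtration of $A$; and clause (iii) is a controlled ``$I\cap J$ is large'', replacing an element that is simultaneously close to $M_n(I\cap A_r)$ and to $M_n(J\cap A_r)$ by one in $M_n(I\cap J\cap A_{\rho(r)})$ at the same tolerance.

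Given $x\in K_1^{\varepsilon,r,N}(A)$, I would (after stabilizing and applying relaxation-of-control maps) represent $x$ by an $(\varepsilon,r,N)$-invertible $u\in M_n(\tilde A)$ with inverse-witness $v\in M_n(\tilde A_r)$, and use clause (i) to split the non-scalar part of $u$ into an $I$-summand and a $J$-summand in $M_n(A_{\rho(r)})$ of norm at most $\|u\|$. Feeding $u$, $v$, and either summand into the usual Whitehead/Bott matrix manipulation in $M_{2n}$ produces two near-idempotents $e_I,e_J$ over $\tilde A$ with $e_I-P_0\in M_{2n}(I)$ and $e_J-P_0\in M_{2n}(J)$ (here $P_0=\mathrm{diag}(1_n,0_n)$), both of propagation bounded by a fixed composition of $\rho$ with $r$, and with $\|e_I-e_J\|$ small in terms of $\varepsilon$ and $N$; clause (iii) then furnishes a near-idempotent whose defect lies in $M_{2n}(I\cap J)$, and a controlled holomorphic-functional-calculus step, as in \cite[\S3]{Chung1}, rectifies it to an honest $(\varepsilon',r',N')$-idempotent $e$ in the unitization of $M_{2n}(I\cap J)$. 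One sets $\partial_c(x):=[e]-[P_0]\in K_0^{\lambda_N\varepsilon,h_{\varepsilon,N}r,\lambda_N}(I\cap J)$, and checks that changing the representative $u$, the witness $v$, the splitting, or the stabilization alters this class only up to a further controlled relaxation --- which is what pins down the control pair $(\lambda,h)$ --- and that $\partial_c$ is additive.

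For the exactness clause, suppose $\partial_c(x)=0$ in $K_0^{\lambda_N\varepsilon,h_{\varepsilon,N}r,\lambda_N}(I\cap J)$. Unwinding the definition of the quantitative $K_0$-group produces, after one more relaxation, a controlled homotopy inside the unitization of $M(I\cap J)$ from a stabilization of $e$ to a stabilization of $P_0$. Transporting this homotopy back into $M(\tilde A)$ lets me modify the Whitehead matrix so that, relative to the decomposition of clause (i), it becomes controlled-close to a product $W_I W_J$ with $W_I$ an $(\lambda'_N\varepsilon,h'_{\varepsilon,N}r,\lambda'_N)$-invertible over $\tilde I$ and $W_J$ one over $\tilde J$; reading off the effect on $K$-theory exhibits $a$ coming from $I$ and $b$ coming from $J$ with $x=a+b$ at control parameters $(\lambda'_N\varepsilon,h'_{\varepsilon,N}r,\lambda'_N)$.

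The main obstacle is the bookkeeping of the control functions: $\lambda$ must depend only on $N$ and $h$ only on $(\varepsilon,N)$, uniformly over \emph{all} filtered $L^p$ operator algebras and \emph{all} controlled Mayer--Vietoris pairs at once, even though each step above multiplies $\varepsilon$ by a power depending polynomially on $N$ --- true of the Whitehead/Bott formulas and especially of the functional-calculus rectification, which is where the genuine $N$-dependence enters in the Banach setting, in contrast to the $C^*$ case --- and composes $\rho$ with itself boundedly many times. Packaging these dilations into a single pair $(\lambda,h)\le(\lambda',h')$ of monotone functions that works both for the construction and for its reversal in the exactness clause is the delicate point; compared with \cite{GWY2} the only new feature is that propagation is now controlled by the product $h_{\varepsilon,N}r$ rather than by a fixed multiple of $r$, which is harmless since only boundedly many dilations ever occur, and all the underlying algebraic manipulations are already available in the quantitative $K$-theory of \cite{Chung1}.
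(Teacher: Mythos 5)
Your outline follows essentially the same route as the proof this paper relies on: Theorem \ref{MVthm} is not reproved here but imported from \cite[Section 5]{Chung1}, whose construction of the controlled boundary map and of the exactness clause is exactly the Oyono-Oyono--Yu style argument you describe (splitting via condition (i) of Definition \ref{MVpair}, passage to $I\cap J$ via condition (iii), Whitehead-type matrix manipulations and quantitative functional calculus, with the control pairs $(\lambda,h)\leq(\lambda',h')$ absorbing the $N$-dependent losses). The only caveat, which the paper itself flags and you correctly single out as the delicate bookkeeping point, is that the hypotheses here allow a general propagation dilation $\rho$, so the proofs of \cite{Chung1} carry over only after the appropriate adjustments of the propagation parameter $r$.
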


\begin{rem}
Note that the theorem above only mentions a boundary map from the quantitative $K_1$ group of $A$ to the quantitative $K_0$ group of $I\cap J$, and not the other bounday map from the quantitative $K_0$ group of $A$ to the quantitative $K_1$ group of $I\cap J$. This is because a controlled version of Bott periodicity was not proved in \cite{Chung1}.
Nevertheless, in our application, we may consider suspensions of the algebras involved and still apply the theorem.
\end{rem}

\section{Main theorem}

In this section, we shall prove the main theorem that the $L^p$ assembly map for $\Gamma\curvearrowright X$ in Definition \ref{assembly} is an isomorphism if $\Gamma\curvearrowright X$ has finite dynamical complexity.
The proof that we present is modeled after the proof in the $C^*$-algebraic setting in \cite{GWY2}, consisting of a homotopy invariance argument and a Mayer-Vietoris argument.


\subsection{Homotopy invariance} \label{sec:Base}

Recall that we use the shorthand $A^s(G)$ for $B^p_{L,0}(G;s)$. The homotopy invariance argument involved in the proof of the main theorem leads to the following statement. 

\begin{prop} \label{baseprop}
Let $G$ be an open subgroupoid of $\Gamma\ltimes X$ such that \[G\subseteq\{(gx,g,x)\in\Gamma\ltimes X:l(g)\leq s\}\] for some $s\geq 0$. Then $K_*(A^s(G))=0$.
\end{prop}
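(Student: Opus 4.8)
The key observation is that the hypothesis $G \subseteq \{(gx,g,x) : l(g) \le s\}$ forces every operator with support in (a compact subset of) $G$ to automatically have $\Gamma$-propagation bounded by $s$. Consequently the $\Gamma$-propagation constraint is no longer an obstruction to building homotopies, and we can run the standard Eilenberg swindle / infinite repeat argument showing that a localization-type algebra has vanishing $K$-theory, exactly as in the $C^*$-algebraic case (this is the $L^p$ analogue of \cite[Proposition~4.1 or similar]{GWY2}). So the first step is to record this remark precisely: if $T \in \mathbb{C}[G;s]$ then $\supp(T)$ is contained in a compact subset $K$ of $G$, hence in $\{(gx,g,x):l(g)\le s\}$, so $\mathrm{prop}_\Gamma(T) \le s$; and the same holds uniformly for $a(t)$ when $a \in \mathbb{C}_{L,0}[G;s]$ since $\bigcup_t \supp(a(t))$ is relatively compact in $G$.

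Next I would set up the standard contracting homotopy. One uses that $P_s(\Gamma)$ has a natural family of "shrinking toward the origin" maps, or more directly that $B^p_{L,0}(G;s)$ is (homotopy) contractible because the defining functions $a:[0,\infty)\to \mathbb{C}[G;s]$ with $a(0)=0$ and Rips-propagation tending to $0$ can be reparametrized. Concretely, for $u \in [0,1]$ one defines an operation on such functions — e.g. $a \mapsto a_u$ where $a_u(t) = a(t/u)$ for $u > 0$ (suitably interpreted, with $a_0 = 0$) — which gives a homotopy between the identity on $A^s(G)$ and the zero map, after first using the infinite-matrix-amplification trick to absorb the failure of $a_u$ to land in the algebra on the nose. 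Here the uniform $\Gamma$-propagation bound from the first step is exactly what guarantees each $a_u$ still defines an element of $A^s(G)$: reparametrizing in $t$ does not affect $\mathrm{prop}_\Gamma$, and it can only decrease the Rips-propagation at each time, so all the defining conditions persist. The amplification/swindle is needed because $K$-theory of Banach algebras, like $C^*$-algebras, only sees the algebra up to stabilization; one forms $M_\infty$ or uses an isometric shift on $\ell^p(\mathbb{N})$ tensored in (recalling $E_s$ already contains an $\ell^p$ factor on which such a shift acts) so that "$a \oplus a \oplus \cdots$" makes sense and conjugation by the shift exhibits $1 \sim 1 \oplus (\text{shift-of-}1)$.

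The remaining work is bookkeeping: checking that the reparametrization map is continuous in $u$ in the norm of $A^s(G)$ (uniform continuity of each $a$ is used here, which is built into the definition of $\mathbb{C}_{L,0}[G;s]$), checking that the amplified/stabilized version of the homotopy is genuinely through $\ast$-homomorphisms or at least through morphisms inducing the right maps on $K$-theory, and concluding $K_*(A^s(G)) \cong K_*(0) = 0$. I expect the main obstacle to be the technical care needed in the reparametrization at $u \to 0$ — ensuring norm-continuity there and that the path really terminates at the zero homomorphism rather than something only approximately zero — together with making the Eilenberg swindle rigorous in the $L^p$ (non-selfadjoint, non-$C^*$) setting, where one must use that $\mathcal{K}_\Gamma$-stability and the isometric $\ell^p$-shift behave well; but all of this is routine given the framework of \cite{Chung1} and mirrors \cite{GWY2} with only cosmetic changes, since the one genuinely new point (propagation control) is handled for free by the hypothesis on $G$.
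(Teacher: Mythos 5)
There is a genuine gap, and it is not the ``technical care at $u\to 0$'' you flag as the main obstacle --- it is fatal. The proposed contraction $a\mapsto a_u$ with $a_u(t)=a(t/u)$ (or $a(ut)$) is not norm-continuous at $u=0$: for every $u>0$ the map $t\mapsto t/u$ is a bijection of $[0,\infty)$, so $\sup_t\|a_u(t)\|=\sup_t\|a(t)\|$, while $a_0=0$; no amplification or shift trick can repair an endpoint discontinuity of this kind. Moreover, the generic Eilenberg swindle ``$a\mapsto a\oplus a\oplus\cdots$ conjugated by an isometric shift'' never lands in the obstruction algebra itself (the infinite repetition of a nonzero element of $C(X)\otimes\mathcal{K}_\Gamma$ is not compact in the stabilizing direction); it legitimately kills only multiplier-algebra $K$-theory. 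A sharper sanity check: your argument never really uses the hypothesis on $G$, since uniform finiteness of $\Gamma$-propagation in $t$ is already built into the definition of $\mathbb{C}_L[\Gamma\curvearrowright X;s]$, so if the reparametrization-plus-swindle argument worked it would prove $K_*(B^p_{L,0}(\Gamma\curvearrowright X;s))=0$ for every action, i.e.\ by Lemma \ref{Lem:obsvan} that the assembly map is always an isomorphism --- which would make the main theorem (and, at $p=2$, the Baum--Connes problem with coefficients) vacuous. The algebra $A^s(G)$ is not contractible; its $K$-theory vanishes here for a geometric reason specific to the hypothesis.

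That reason is the one your proposal omits: $l(g)\leq s$ for all $(gx,g,x)\in G$ guarantees that for $(z,x)\in P_s(G)$ the set $\{e\}\cup\supp(z)$ spans a simplex of $P_s(\Gamma)$, so the straight-line homotopy $F_r(z,x)=((1-r)z+re,x)$ contracts $P_s(G)$ onto the unit space $G^{(0)}$ \emph{without leaving scale $s$}. The paper's proof implements this contraction by a family of isometries $v_n(t)$ that slide points toward the vertex $e$ at a rate depending on $n$, shows via a double-algebra construction and a swindle carried out in $M(A^s(G;\mathcal{K}_\Gamma^\infty))$ (where swindles are legitimate) that conjugation by $v_0$ and by $v_\infty$ induce the same map on $K_*(A^s(G))$, identifies the $v_\infty$-map with the identity, and factors the $v_0$-map through $K_*(A^s(G^{(0)}))$. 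Only over the unit space, where the Rips-propagation of $a(t)$ is identically zero, does the time-translation swindle $a\mapsto\bigoplus_n a(\cdot-n)$ apply; for general elements this infinite time-shifted sum violates the requirement that Rips-propagation tend to $0$ as $t\to\infty$, which is exactly why the reduction to $G^{(0)}$ is needed. So the two-step structure (contract the Rips directions using the hypothesis, then swindle in the zero-propagation algebra) is the content of the proposition, and your proposal is missing it.
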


Before getting into the proof of the proposition, we need to fix some terminology that is standard in the $C^*$-algebraic setting but perhaps less so when Hilbert spaces are replaced by other Banach spaces. Having done that, the series of results in the rest of this section then yields the proposition.


\begin{defn}
Let $E$ be a complex Banach space. We say that $T\in B(E)$ is a partial isometry if $||T||\leq 1$ and there exists $S\in B(E)$ such that $||S||\leq 1$, $TST=T$, and $STS=S$. We call such an $S$ a generalized inverse of $T$.
\end{defn}

\begin{rem}	\leavevmode
\begin{enumerate}
\item In \cite[Section 6]{Phil12}, Phillips considers spatial partial isometries on $L^p$ spaces. Such spatial partial isometries are partial isometries in the sense of the preceding definition but the converse is not true.
\item If $(Z,\mu)$ is a $\sigma$-finite measure space, $p\in[1,\infty)\setminus\{2\}$, and $T\in B(L^p(Z,\mu))$ is an isometric (but not necessarily surjective) linear map, then it follows from Lamperti's theorem \cite{Lamp} (also see \cite[Theorem 6.9]{Phil12}) that $T$ is a partial isometry in the sense above, and one can find a generalized inverse $S$ such that $ST=I$ (cf. \cite[Section 6]{Phil12}). Hereafter, we will denote a fixed choice of such an $S$ by $T^\dagger$.
\end{enumerate}
\end{rem}

\begin{defn}
If $A\subset B(L^p(\mu))$ is an $L^p$ operator algebra, then we say that $b\in B(L^p(\mu))$ is a multiplier of $A$ if $bA\subset A$ and $Ab\subset A$. We say that $b$ is an isometric multiplier of $A$ if $b$ is an isometry and both $b$ and $b^\dagger $ are multipliers of $A$. Denote by $M(A)$ the set of all multipliers of $A$.
\end{defn}

Note that $M(A)$ is also an $L^p$ operator algebra.

For $G$ an open subgroupoid of $\Gamma\ltimes X$ and $s\geq 0$, we introduce the following notation: 
\begin{itemize}
\item $P_s(G)=\{(z,x)\in P_s(\Gamma)\times X:(gx,g,x)\in G\;\text{for all}\;g\in \supp(z)\}$.
\item $Z_G=(Z_s\times X)\cap P_s(G)$.
\item $E_G=\ell^p(Z_G,\ell^p\otimes\ell^p(\Gamma))\cong\ell^p(Z_G)\otimes\ell^p\otimes\ell^p(\Gamma)$.
\end{itemize}

Note that $E_G$ is a subspace of $E_s$. 
Moreover, the faithful representation of $B^p(G;s)$ on $E_s$ restricts to a faithful representation on $E_G$. 
Thus we will regard $B^p(G;s)$ as faithfully represented on $E_G$, and $A^s(G):=B^p_{L,0}(G;s)$ as faithfully represented on $L^p([0,\infty),E_G)$.

If $(z,x)\in P_s(G)$ and $\supp(z)=\{g_1,\ldots,g_n\}$, then $\{e,g_1,\ldots,g_n\}$ also spans a simplex $\Delta$ in $P_s(\Gamma)$ such that $\Delta\times\{x\}$ is contained in $P_s(G)$. Hence the family of functions \begin{equation}\label{homodef} F_r:P_s(G)\rightarrow P_s(G), (z,x)\mapsto ((1-r)z+re,x) \quad\quad (0\leq r\leq 1) \end{equation} defines a homotopy between the identity map on $P_s(G)$ and the projection onto the subset $\{(z,x)\in P_s(G):z=e\}$, which we may identify with the unit space $G^{(0)}$.

In the definition of $\mathbb{C}[\Gamma\curvearrowright X;s]$, we may use $\mathcal{K}_\Gamma^\infty$, the algebra of compact operators on $(\bigoplus_{n=0}^\infty \ell^p\otimes\ell^p(\Gamma))_{p}\cong (\bigoplus_{n=0}^\infty\ell^p)_{p}\otimes\ell^p(\Gamma)$, thereby obtaining another $L^p$ Roe algebra $B^p(\Gamma\curvearrowright X;\mathcal{K}_\Gamma^\infty;s)$. Moreover, fixing an isometric isomorphism $\phi:\ell^p\stackrel{\cong}{\rightarrow}(\bigoplus_{n=0}^\infty\ell^p)_{p}$ gives an isomorphism \[B^p(\Gamma\curvearrowright X;s)\cong B^p(\Gamma\curvearrowright X;\mathcal{K}_\Gamma^\infty;s).\]
We also have the corresponding statements for the $L^p$ localization algebras and obstruction algebras defined earlier. 

For each $n$, define an isometry $u_{n,0}:\ell^p\rightarrow(\bigoplus_{n=0}^\infty\ell^p)_{p}$ by inclusion as the $n$th summand, and define $u_{n,0}^\dagger :(\bigoplus_{n=0}^\infty\ell^p)_{p}\rightarrow\ell^p$ by projection onto the $n$th summand. Then $u_{n,0}^\dagger u_{n,0}=I$ for all $n$, and $u_{n,0}^\dagger u_{m,0}=0$ when $n\neq m$. Define $u_n:L^p([0,\infty),E_G)\rightarrow L^p([0,\infty),E_G^\infty)$ to be the operator induced by tensoring $u_{n,0}$ with the identity on the other factors, where $E_G^\infty=(\bigoplus_{n=0}^\infty E_G)_{p}$, and define $u_n^\dagger $ similarly using $u_{n,0}^\dagger $. Then $u_n^\dagger u_n=I$ for all $n$, and $u_n^\dagger u_m=0$ when $n\neq m$. 

Given $a\in B(\ell^p)$, consider $a^\infty=a\oplus a\oplus\cdots\in B((\bigoplus_{n=0}^\infty\ell^p)_{p})$. Then $\mu(a)=a^\infty$ is an isometric homomorphism $B(\ell^p)\rightarrow B((\bigoplus_{n=0}^\infty\ell^p)_{p})$. We may also consider the isometry $v\in B((\bigoplus_{n=0}^\infty\ell^p)_{p})$ given by the right shift taking the $n$th summand onto the $(n+1)$st summand. Denote by $v^\dagger \in B((\bigoplus_{n=0}^\infty\ell^p)_{p})$ the left shift. Then $v\mu(a)v^\dagger =0\oplus a\oplus a\oplus\cdots$ for all $a\in B(\ell^p)$. With $u_{0,0}$ as above, $u_{0,0}au_{0,0}^\dagger $ is given by $a\oplus 0\oplus 0\oplus\cdots$ for all $a\in B(\ell^p)$. Now $a\mapsto\mu^{+1}(a):=v\mu(a)v^\dagger $ and $a\mapsto\mu^0(a):=u_{0,0}au_{0,0}^\dagger $ are bounded homomorphisms $B(\ell^p)\rightarrow B((\bigoplus_{n=0}^\infty\ell^p)_{p})$.

Now given $a\in B(L^p([0,\infty),E_G))$, consider the bounded linear operator $\mu(a)=a^\infty$ on $L^p([0,\infty),E_G^\infty)$. Proceeding similarly as above, we get bounded homomorphisms \[\mu,\mu^{+1},\mu^0:B(L^p([0,\infty),E_G))\rightarrow B(L^p([0,\infty),E_G^\infty)).\] Moreover, each of them maps 
$M(A^s(G))$ into $M(A^s(G;\mathcal{K}_\Gamma^\infty))$.

The following lemma is an $L^p$ version of a fairly standard result in the $K$-theory of $C^*$-algebras and can be proved in the same way as it is done for $C^*$-algebras (cf. \cite[Lemma 4.6.2]{HR} or \cite[Section 3, Lemma 2]{HRY}).
\begin{lem} \label{lemMult}
Let $\alpha:A\rightarrow C$ be a bounded homomorphism of $L^p$ operator algebras with $C\subset B(L^p(\mu))$, and let $v\in B(L^p(\mu))$ be an isometric multiplier of $C$. Then the map $a\mapsto v\alpha(a)v^\dagger $ is a bounded homomorphism from $A$ to $C$, and induces the same map as $\alpha$ on $K$-theory.

More generally, if $v\in B(L^p(\mu))$ is a partial isometry and a multiplier of $C$, $w$ is a generalized inverse of $v$ that is also a multiplier of $C$, and $\alpha(a)wv=\alpha(a)=wv\alpha(a)$ for all $a\in A$, then the map $a\mapsto v\alpha(a)w$ is a bounded homomorphism from $A$ to $C$, and induces the same map as $\alpha$ on $K$-theory.
\end{lem}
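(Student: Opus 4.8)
The plan is to follow the classical $C^*$-algebraic argument, in which conjugation by an isometry in a multiplier algebra becomes trivial on $K$-theory once it is absorbed into a rotation inside $2\times 2$ matrices, replacing adjoints throughout by generalized inverses. Write $\beta(a)=v\alpha(a)w$. The relations $vwv=v$ and $wvw=w$ make $vw$ and $wv$ idempotent, and combined with $wv\alpha(a)=\alpha(a)=\alpha(a)wv$ they give $\beta(a)\beta(b)=v\alpha(a)(wv)\alpha(b)w=v\alpha(ab)w=\beta(ab)$; boundedness and the inclusion $\beta(A)\subseteq C$ are immediate from $\|v\|,\|w\|\le 1$ and the fact that $v$ and $w$ are multipliers of $C$. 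So the content of the lemma is the assertion $\beta_*=\alpha_*$ on $K$-theory. (In the isometric case take $w=v^\dagger$; then $v^\dagger v=I$, so $wv=I$ and the two identities hold automatically, so the first statement of the lemma is a special case of the second.)

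First I would pass to $M_2(C)$, which is again an $L^p$ operator algebra acting on the $\ell^p$-direct sum $L^p(\mu)\oplus L^p(\mu)$, and write $\iota\colon C\to M_2(C)$ for the top-left corner embedding, which induces an isomorphism on $K$-theory by stability of $K$-theory under amplification by matrices. Next I would introduce the element
\[ W=\begin{pmatrix} v & 1-vw\\ 1-wv & w\end{pmatrix}\in M_2(M(C)), \]
whose entries are multipliers of $C$ (recall $1\in M(C)$), so that $W$ and its inverse multiply $M_2(C)$ into itself. Using only $vwv=v$, $wvw=w$ and the idempotency of $vw$ and $wv$ one checks that $W$ is invertible, with inverse obtained by interchanging $v$ and $w$; and then, using $(1-wv)\alpha(a)=0=\alpha(a)(1-wv)$, one computes
\[ W\,\iota(\alpha(a))\,W^{-1}=\begin{pmatrix} v\alpha(a)w & 0\\ 0 & 0\end{pmatrix}=\iota(\beta(a)) \qquad (a\in A). \]
In other words $\iota\circ\beta=\mathrm{Ad}(W)\circ\iota\circ\alpha$ as homomorphisms $A\to M_2(C)$, where $\mathrm{Ad}(W)$ denotes the automorphism $b\mapsto WbW^{-1}$ of $M_2(C)$ implemented by the invertible multiplier $W$.

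It then remains to show that $\mathrm{Ad}(W)$ induces the identity on $K_*(M_2(C))$; granting this, applying $K_*$ to the displayed identity and cancelling the isomorphism $\iota_*$ gives $\beta_*=\alpha_*$, as wanted. This last step is where the Banach (rather than $C^*$) setting calls for a small adjustment: one cannot invoke Murray--von Neumann equivalence via partial isometries, but the conclusion still holds because $K$-theory classes are invariant under algebraic (similarity) equivalence of idempotents and of invertibles. Concretely, for an idempotent $e$ representing a class in $K_0(M_2(C))$ the conjugate $W^{(n)}e(W^{(n)})^{-1}$ again lies in the relevant matrix algebra, since $M_2(C)$ is an ideal in its multiplier algebra, and it is algebraically equivalent to $e$ via $x=W^{(n)}e$ and $y=e(W^{(n)})^{-1}$ (so that $xy=W^{(n)}e(W^{(n)})^{-1}$ and $yx=e$); the analogous argument with invertibles handles $K_1$. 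Hence $\mathrm{Ad}(W)_*=\mathrm{id}$. I expect the only real obstacle to be the bookkeeping in the middle paragraph --- verifying that $W$ is genuinely invertible from the weak partial-isometry relations and that the conjugation identity closes up --- together with isolating the correct Banach-algebraic form of the statement that automorphisms implemented by invertible multipliers act trivially on $K$-theory.
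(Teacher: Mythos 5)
Your overall route is the standard one that the paper itself merely cites (it gives no proof, referring to the $C^*$-arguments of Higson--Roe and Higson--Roe--Yu), and most of it is fine: the reduction of the isometric case to the second statement via $w=v^\dagger$ and $v^\dagger v=I$ is correct, $\beta(a)=v\alpha(a)w$ is indeed a bounded homomorphism into $C$, the matrix $W=\bigl(\begin{smallmatrix} v & 1-vw\\ 1-wv & w\end{smallmatrix}\bigr)$ really is invertible with inverse obtained by swapping $v$ and $w$ (your identities $vwv=v$, $wvw=w$ suffice), and the computation $W\iota(\alpha(a))W^{-1}=\iota(\beta(a))$ uses exactly the hypotheses $\alpha(a)wv=\alpha(a)=wv\alpha(a)$.

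The gap is in the last step, where you claim $\mathrm{Ad}(W)_*=\mathrm{id}$ on $K_*(M_2(C))$ via algebraic equivalence of idempotents. That argument only treats idempotents (and invertibles) whose entries lie in $M_2(C)$ itself, but the algebras to which the lemma is applied in this paper ($A^s(G)$, $A^s(G;\mathcal{K}_\Gamma^\infty)$, the subalgebra $C$ of the double $D$) are nonunital, so $K_0$ classes must be represented by idempotents $e$ in matrices over the unitization, with a nonzero scalar part. For such $e$ the proposed intertwiners $x=W^{(n)}e$ and $y=e(W^{(n)})^{-1}$ do not lie in matrices over $\widetilde{M_2(C)}$ --- only over the multiplier algebra --- so your equivalence proves $[W^{(n)}e(W^{(n)})^{-1}]=[e]$ only in $K_0$ of the multiplier algebra, and the map from $K_0$ of (the unitization of) $M_2(C)$ into $K_0$ of its multiplier algebra need not be injective; the $K_1$ case has the same defect, since the similarity is implemented by an element outside matrices over the unitization. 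The statement you need is true, but its correct Banach-algebraic proof is the homotopy argument: by the Whitehead lemma, $\mathrm{diag}(W,W^{-1})$ is connected to the identity through invertibles in $M_2$ of the multiplier algebra of $M_2(C)$, conjugation along this (norm-continuous, uniformly bounded) path gives a homotopy of bounded endomorphisms of $M_2(M_2(C))$ from $\mathrm{Ad}(\mathrm{diag}(W,W^{-1}))$ to the identity, and since this conjugation intertwines the corner embedding with $\mathrm{Ad}(W)$, homotopy invariance of Banach-algebra $K$-theory plus stability of the corner embedding yields $\mathrm{Ad}(W)_*=\mathrm{id}$. With that substitution (which is exactly how the cited $C^*$-references argue, and which carries over verbatim to $L^p$ operator algebras), your proof is complete.
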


\begin{lem}
$K_*(M(A^s(G)))=0$.
\end{lem}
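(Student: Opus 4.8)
The plan is to prove vanishing by an Eilenberg swindle in the multiplier algebra, exploiting the stabilisation by $\mathcal{K}_\Gamma^\infty$ and the three bounded homomorphisms $\mu,\mu^{+1},\mu^{0}$ constructed above, each of which restricts to a bounded homomorphism $M(A^s(G))\to M(A^s(G;\mathcal{K}_\Gamma^\infty))$. Throughout I fix the isometric isomorphism $\phi\colon\ell^p\xrightarrow{\ \cong\ }(\bigoplus_{n=0}^\infty\ell^p)_p$ and write $\Phi\colon M(A^s(G))\xrightarrow{\ \cong\ }M(A^s(G;\mathcal{K}_\Gamma^\infty))$ for the isomorphism of multiplier algebras induced by the isomorphism $A^s(G)\cong A^s(G;\mathcal{K}_\Gamma^\infty)$.

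The first step is to record the elementary identity $\mu(a)=\mu^{0}(a)+\mu^{+1}(a)$ together with the orthogonality relations $\mu^{0}(a)\,\mu^{+1}(b)=\mu^{+1}(b)\,\mu^{0}(a)=0$ for all $a,b$; both are immediate from the block pictures $\mu^{0}(a)=a\oplus 0\oplus 0\oplus\cdots$ and $\mu^{+1}(a)=0\oplus a\oplus a\oplus\cdots$ on $E_G^\infty=(\bigoplus_{n=0}^\infty E_G)_p$. Exactly as in the $C^*$-algebraic case, a pair of homomorphisms into a common Banach algebra whose ranges are orthogonal induces on $K$-theory the sum of the two induced maps, so $\mu_*=\mu^{0}_*+\mu^{+1}_*$ on $K_*(M(A^s(G)))$.

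The second step is to apply Lemma \ref{lemMult} twice. Since $\mu^{+1}(a)=v\mu(a)v^\dagger$ with $v$ (the right shift, tensored with the identity) an isometric multiplier of $M(A^s(G;\mathcal{K}_\Gamma^\infty))$ — it is a multiplier of $A^s(G;\mathcal{K}_\Gamma^\infty)$, hence of its multiplier algebra — Lemma \ref{lemMult} gives $\mu^{+1}_*=\mu_*$; together with the first step this forces $\mu^{0}_*=0$. On the other hand $\mu^{0}(a)=u_{0,0}au_{0,0}^\dagger$, so $\Phi^{-1}\circ\mu^{0}$ is the map $a\mapsto w a w^\dagger$ with $w=\phi^{-1}u_{0,0}$ a partial isometry satisfying $w^\dagger w=I$; the general statement in Lemma \ref{lemMult}, applied with $\alpha$ the identity homomorphism of $M(A^s(G))$, then shows that $\Phi^{-1}\circ\mu^{0}$ induces the identity on $K_*(M(A^s(G)))$. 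Hence $\mu^{0}_*=\Phi_*$ is an isomorphism, and combining this with $\mu^{0}_*=0$ gives $K_*(M(A^s(G)))=0$.

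The points that require a little attention — $L^p$-flavoured bookkeeping rather than a real obstacle — are checking that the shift $v$ and the operator $w=\phi^{-1}u_{0,0}$ really are (partial) isometric multipliers of the relevant algebras with $w^\dagger w=I$, so that the hypotheses of Lemma \ref{lemMult} are met and $\mu^{0}$ is correctly recognised, up to the stabilisation isomorphism, as the identity on $K$-theory; all of this is the exact analogue of the familiar $C^*$-algebraic verification and is read off from the explicit block-operator formulas already displayed.
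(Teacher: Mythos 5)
Your proposal is correct and follows essentially the same route as the paper: the same Eilenberg swindle with the decomposition $\mu=\mu^{0}+\mu^{+1}$ and orthogonality of ranges giving $\mu_*=\mu^{0}_*+\mu^{+1}_*$, the same appeal to Lemma \ref{lemMult} with the right shift to get $\mu^{+1}_*=\mu_*$ and hence $\mu^{0}_*=0$, and the same identification of $\mu^{0}$ (up to the stabilisation isomorphism) as inducing an isomorphism on $K$-theory. The only difference is that you spell out the last step via $w=\phi^{-1}u_{0,0}$ and Lemma \ref{lemMult}, which the paper leaves implicit.
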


\begin{proof}
Note that $\mu,\mu^{+1}:M(A^s(G))\rightarrow M(A^s(G;\mathcal{K}_\Gamma^\infty))$ induce the same map on $K$-theory by the previous lemma. Moreover, since $\mu^0(a)\mu^{+1}(a)=\mu^{+1}(a)\mu^0(a)=0$ for all $a\in M(A^s(G))$ and $\mu=\mu^0+\mu^{+1}$, the induced maps on $K$-theory satisfy $\mu_*=\mu^0_*+\mu^{+1}_*=\mu^0_*+\mu_*$. Hence $\mu^0_*=0$. But $\mu^0$ induces an isomorphism on $K$-theory so $K_*(M(A^s(G)))=0$.
\end{proof}


For each $z\in Z_s$ such that $(z,x)\in P_s(G)$ for some $x\in X$ and for each $r\in\mathbb{Q}\cap[0,1]$, let $E_{z,r}$ be a copy of $\ell^p$ so that we have an isometric isomorphism $\ell^p\cong(\bigoplus_{z,r}E_{z,r})_{p}$, and let $w_{z,r}:\ell^p\otimes\ell^p(\Gamma)\rightarrow \ell^p\otimes\ell^p(\Gamma)$ be an isometry with range $E_{z,r}\otimes\ell^p(\Gamma)$, and let $w_{z,r}^\dagger:\ell^p\otimes\ell^p(\Gamma)\rightarrow\ell^p\otimes\ell^p(\Gamma)$ be the projection onto $E_{z,r}\otimes\ell^p(\Gamma)$. 
For each $r\in\mathbb{Q}\cap[0,1]$, define an isometry \[w(r):\ell^p(Z_G)\otimes\ell^p\otimes\ell^p(\Gamma)\rightarrow\ell^p(Z_G)\otimes\ell^p\otimes\ell^p(\Gamma)\] by $\delta_{z,x}\otimes\eta\mapsto\delta_{(1-r)z+re,x}\otimes w_{z,r}\eta$.

For $r\in\mathbb{Q}\cap[0,1)$, let $w(r)^\dagger$ be given by
\[
\delta_{z,x}\otimes\eta\mapsto\begin{cases} \delta_{z',x}\otimes w_{z',r}^\dagger\eta &\text{if}\; z=(1-r)z'+re, \\ 0 &\text{otherwise}. \end{cases}
\]
and let $w(1)^\dagger$ be given by 
\[
\delta_{z,x}\otimes\eta\mapsto\begin{cases} \sum_{z'}\delta_{z',x}\otimes w_{z',1}^\dagger\eta &\text{if}\;z=e, \\ 0 &\text{otherwise}. \end{cases}
\]
Note that if $r\neq s$, then $w(r)^\dagger w(s)=0$.

For $t\geq 0$ and $n\in\mathbb{N}\cup\{\infty\}$, define an isometry \[v_n(t):\ell^p(Z_G)\otimes\ell^p\otimes\ell^p(\Gamma)\rightarrow\ell^p(Z_G)\otimes\ell^p\otimes\ell^p(\Gamma)\] in the following way.
Set $v_n(t)$ to be $w(0)$ if $t\leq n$, and $w(1)$ if $t\geq 2n$. For $t=m+s$ with $m\in\mathbb{N}\cap(n,2n)$ and $s\in(0,1)$, set
\[ v_n(t)=\cos\left(\frac{s\pi}{2}\right)^{\frac{2}{p}}w\left(\frac{m-n}{n}\right) + \sin\left(\frac{s\pi}{2}\right)^{\frac{2}{p}}w\left(\frac{m+1-n}{n}\right). \]
Note that if $\frac{1}{p}+\frac{1}{q}=1$, then
\[ v_n(t)^\dagger=\cos\left(\frac{s\pi}{2}\right)^{\frac{2}{q}}w\left(\frac{m-n}{n}\right)^\dagger + \sin\left(\frac{s\pi}{2}\right)^{\frac{2}{q}}w\left(\frac{m+1-n}{n}\right)^\dagger \]
satisfies $v_n(t)^\dagger v_n(t)=I$ for $t=m+s$ with $m\in\mathbb{N}\cap(n,2n)$ and $s\in(0,1)$.

One can check that the map $[0,\infty)\rightarrow B(\ell^p(Z_G)\otimes\ell^p\otimes\ell^p(\Gamma)),t\mapsto v_n(t)$, is norm continuous for each $n$. 
Now define an isometry \[v_n:L^p([0,\infty),E_G)\rightarrow L^p([0,\infty),E_G)\] for each $n$ by $(v_n\xi)(t)=v_n(t)(\xi(t))$ for $\xi\in L^p([0,\infty),E_G)$. Also set $(v_n^\dagger\xi)(t)=v_n(t)^\dagger(\xi(t))$ so that $v_n^\dagger v_n=I$.

Let $a\in\mathbb{C}_{L,0}[G;s]$ and let $T=a(t)$ for some fixed $t\in[0,\infty)$. The matrix entries $(v_n(t)Tv_n(t)^\dagger )_{y,z}(x)$ of $v_n(t)Tv_n(t)^\dagger $ will be linear combinations of at most four terms of the form $w_{y,r_1} T_{F_{r_1}(y),F_{r_2}(z)}(x)w_{z,r_2}^\dagger $, with $F_r$ as in (\ref{homodef}) on page \pageref{homodef}, $r_1,r_2\in\mathbb{Q}\cap[0,1]$, and $|r_1-r_2|<\frac{1}{m}$ whenever $t>2m$. It follows that the Rips-propagation of $v_n(t)a(t)v_n(t)^\dagger $ is at most $\mathrm{prop}_{Rips}a(t)+\min(1,\frac{2}{|t-1|})$, so $v_nav_n^\dagger \in\mathbb{C}_{L,0}[G;s]$.

Also, the operators $S_t:=v_{n+1}(t)v_n(t)^\dagger $ on $\ell^p(Z_G)\otimes\ell^p\otimes\ell^p(\Gamma)$ have matrix entries $(S_t)_{y,z}$ that act as constant functions $X\rightarrow B(\ell^p\otimes\ell^p(\Gamma))$, their Rips-propagation tends to zero as $t\rightarrow\infty$, and they have $\Gamma$-propagation at most $s$ for all $t$. Hence $v_{n+1}v_n^\dagger $ is a multiplier of $A^s(G)$ for all $n$.

\begin{lem}
Let $A$ be a unital Banach algebra, and let $I$ be an ideal in $A$. Define the double of $A$ along $I$ to be $D=\{(a,b)\in A\oplus A:a-b\in I\}$. Assume that $A$ has trivial $K$-theory. Then the inclusion $\iota:I\rightarrow D$ given by $a\mapsto(a,0)$ induces an isomorphism in $K$-theory, and the diagonal inclusion $\delta:I\rightarrow D$ given by $a\mapsto(a,a)$ induces the zero map on $K$-theory.
\end{lem}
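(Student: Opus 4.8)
The plan is to exhibit $D$ as the middle term of a short exact sequence of Banach algebras and then invoke the six-term exact sequence in topological $K$-theory. First I would record the basic structure of $D$: it is a closed unital subalgebra of $A\oplus A$ (with, say, the maximum norm), with unit $(1_A,1_A)$, and $\iota(I)=I\oplus 0$ is a closed two-sided ideal of $D$. The second-coordinate projection $\pi\colon D\to A$, $(a,b)\mapsto b$, is a bounded surjective homomorphism with $\ker\pi=\iota(I)$, since $\pi(a,b)=0$ forces $b=0$ and hence $a=a-b\in I$. Moreover $\pi$ has the bounded algebra-homomorphism section $\sigma\colon A\to D$, $a\mapsto(a,a)$, which is well defined because $a-a=0\in I$. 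Thus
\[
0\longrightarrow I\xrightarrow{\ \iota\ } D\xrightarrow{\ \pi\ } A\longrightarrow 0
\]
is an admissible extension of Banach algebras with $D/\iota(I)\cong A$.

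For the first assertion I would feed this extension into the six-term exact sequence in $K$-theory, which is available for Banach algebras. Since $K_0(A)=K_1(A)=0$, exactness at $K_0(I)$ and at $K_0(D)$ forces $\iota_*\colon K_0(I)\to K_0(D)$ to be both injective and surjective, and likewise $\iota_*\colon K_1(I)\to K_1(D)$ is an isomorphism. Hence $\iota$ induces an isomorphism on $K$-theory.

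For the second assertion, the key observation is that the diagonal inclusion factors through $A$: writing $j\colon I\hookrightarrow A$ for the inclusion and $\sigma$ for the section above, we have $\sigma(j(a))=(a,a)=\delta(a)$ for all $a\in I$, i.e. $\delta=\sigma\circ j$. Therefore $\delta_*=\sigma_*\circ j_*$ on $K$-theory, and since $j_*$ takes values in $K_*(A)=0$ we get $\delta_*=0$.

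The content of the argument is entirely in spotting these two factorizations; the one point requiring a little care — and the closest thing to an obstacle — is checking that the extension is admissible in the sense needed for the $K$-theory six-term exact sequence, which is immediate once we have the bounded homomorphism section $\sigma$. No quantitative $K$-theory is needed, and the possible non-unitality of $I$ causes no difficulty, since topological $K$-theory is functorial for arbitrary (not necessarily unit-preserving) bounded homomorphisms.
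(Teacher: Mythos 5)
Your proposal is correct and follows essentially the same route as the paper: identify $\iota(I)$ as an ideal of $D$ with quotient $A$ via the second-coordinate projection, apply the six-term exact sequence together with $K_*(A)=0$, and note that $\delta$ factors through the diagonal copy of $A$ in $D$. The extra details you supply (the bounded section $\sigma$ and the remark on non-unitality of $I$) are fine but not points of divergence.
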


\begin{proof}
Note that $\iota(I)$ is an ideal in $D$, and $D/\iota(I)$ is isomorphic to $A$ via the second coordinate projection. Since $K_*(A)=0$, it follows from the six-term exact sequence that $\iota$ induces an isomorphism in $K$-theory. 

On the other hand, $\delta$ factors through the diagonal inclusion $A\rightarrow D,a\mapsto(a,a)$, so $\delta$ induces the zero map on $K$-theory since $K_*(A)=0$.
\end{proof}

We shall apply the lemma in the case where $A=M(A^s(G))$ and $I=A^s(G)$ to prove the next proposition.

\begin{prop}
Let $v_n:L^p([0,\infty),E_G)\rightarrow L^p([0,\infty),E_G)$ be as defined above. Then the maps $a\mapsto v_0 av_0^\dagger $ and $a\mapsto v_\infty av_\infty^\dagger $ induce the same map $K_*(A^s(G))\rightarrow K_*(A^s(G))$.
\end{prop}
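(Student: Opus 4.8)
The plan is an Eilenberg swindle. The relevant data are: the maps $\alpha_n\colon a\mapsto v_n a v_n^\dagger$ (for $n\in\mathbb N\cup\{\infty\}$) are bounded homomorphisms $A^s(G)\to A^s(G)$ by the propagation estimates above (which are moreover uniform in $n$); consecutive $\alpha_n,\alpha_{n+1}$ are intertwined by the propagation-controlled multiplier $v_{n+1}v_n^\dagger$ together with its generalized inverse $v_nv_{n+1}^\dagger$; and for each fixed $t$ one has $v_n(t)=v_\infty(t)$ once $n$ is large. I would exploit these by amplifying (to make room for infinite direct sums) and invoking the double-algebra lemma just proved together with Lemma~\ref{lemMult}.

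Concretely: identify $A^s(G)$ with $A^s(G;\mathcal{K}_\Gamma^\infty)$, write $A:=M(A^s(G;\mathcal{K}_\Gamma^\infty))$ (which has trivial $K$-theory, as shown above) and $I:=A^s(G;\mathcal{K}_\Gamma^\infty)$, and form the double $D:=\{(b,c)\in A\oplus A:b-c\in I\}$; by the preceding lemma $\iota\colon I\to D$, $a\mapsto(a,0)$, induces an isomorphism on $K$-theory and $\delta\colon a\mapsto(a,a)$ induces zero, so writing $\iota'\colon a\mapsto(0,a)$ and using that $\delta=\iota\oplus\iota'$ is an orthogonal sum of homomorphisms we get $\iota'_*=-\iota_*$. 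Using uniformity of the propagation bounds, the block-diagonal sum $\beta(a):=\bigoplus_{k\ge0}v_k a v_k^\dagger$ is a homomorphism $A^s(G)\to A$; and because $v_k(t)=v_\infty(t)$ for fixed large $k$, the difference $\beta(a)-\bigoplus_{k\ge0}v_\infty a v_\infty^\dagger$ has only finitely many nonzero blocks at each time, hence $X$-locally compact ($\mathcal{K}_\Gamma^\infty$-valued) matrix entries, hence lies in $I$. Thus $\Theta\colon A^s(G)\to D$, $\Theta(a):=\bigl(\beta(a),\,\bigoplus_{k\ge0}v_\infty a v_\infty^\dagger\bigr)$, is a well-defined homomorphism.

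Now run the swindle. Peeling off the $k=0$ block displays $\Theta$ as an orthogonal sum of homomorphisms $\Theta=\kappa_0\oplus\Theta'$, with $\kappa_0(a)=\bigl(\mu^0(\alpha_0(a)),\mu^0(\alpha_\infty(a))\bigr)$ and $\Theta'(a)=\bigl(\bigoplus_{k\ge1}v_k a v_k^\dagger,\ \bigoplus_{k\ge1}v_\infty a v_\infty^\dagger\bigr)$, so $\Theta_*=(\kappa_0)_*+(\Theta')_*$. Conjugating $\Theta$ first by the summand-shift $(V,V)$ (an isometric multiplier of $D$) and then by $\bigl(\bigoplus_{k\ge1}v_k v_{k-1}^\dagger,\ VV^\dagger\bigr)$ (a propagation-controlled partial-isometric multiplier of $D$, with generalized inverse $\bigl(\bigoplus_{k\ge1}v_{k-1}v_k^\dagger,\ VV^\dagger\bigr)$, and satisfying the absorption hypotheses of Lemma~\ref{lemMult}) carries $\Theta$ to $\Theta'$; hence by Lemma~\ref{lemMult} we get $(\Theta')_*=\Theta_*$ and therefore $(\kappa_0)_*=0$. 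Finally $\kappa_0(a)=\iota\bigl(\mu^0(\alpha_0(a))\bigr)+\iota'\bigl(\mu^0(\alpha_\infty(a))\bigr)$ is again an orthogonal sum of homomorphisms, so $0=(\kappa_0)_*=\iota_*\circ(\mu^0\circ\alpha_0)_*+\iota'_*\circ(\mu^0\circ\alpha_\infty)_*=\iota_*\bigl((\mu^0\circ\alpha_0)_*-(\mu^0\circ\alpha_\infty)_*\bigr)$; since $\iota_*$ is injective and $\mu^0$ induces an isomorphism on $K$-theory (exactly as for $M(A^s(G))$ above), we conclude $(\alpha_0)_*=(\alpha_\infty)_*$.

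The main obstacle is not conceptual but bookkeeping: one must check that each block-shift invoked really is a propagation-controlled (partial-)isometric multiplier of the amplified algebra so that Lemma~\ref{lemMult} legitimately applies, and that every ``difference of amplifications'' claimed to lie in the obstruction algebra genuinely does — which is precisely where the stabilization $v_k(t)=v_\infty(t)$ (fixed $t$, large $k$) enters, forcing finitely many nonzero blocks at each time and hence $\mathcal{K}_\Gamma^\infty$-valued (compact) entries. Verifying the absorption hypotheses of Lemma~\ref{lemMult} for the composite conjugation, and checking the orthogonal-sum identities $\Theta=\kappa_0\oplus\Theta'$ and $\kappa_0=\iota(\mu^0\alpha_0(\cdot))\oplus\iota'(\mu^0\alpha_\infty(\cdot))$ (so that the induced maps on $K$-theory add), are the points requiring care.
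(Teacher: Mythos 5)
Your strategy is the paper's own: amplify by $\mathcal{K}_\Gamma^\infty$, work in the double $D$ of $M(A^s(G;\mathcal{K}_\Gamma^\infty))$ along $I=A^s(G;\mathcal{K}_\Gamma^\infty)$, run an Eilenberg swindle comparing the diagonal $\bigoplus_k v_kav_k^\dagger$ against $\bigoplus_k v_\infty av_\infty^\dagger$, and use the double-algebra lemma together with Lemma~\ref{lemMult}; your final bookkeeping ($\Theta=\kappa_0\oplus\Theta'$, $(\Theta')_*=\Theta_*$, $(\kappa_0)_*=0$, then $\iota'_*=-\iota_*$ and invertibility of $\iota_*$ and $(\mu^0)_*$) is a legitimate rearrangement of the paper's chain $\alpha_*=\beta_*=\gamma_*=(\gamma+\delta)_*$ and $\alpha=\psi_0+\zeta$, $\gamma+\delta=\psi_\infty+\zeta$. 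The well-definedness of $\Theta$ (second components in $M(I)$ with difference in $I$, via $v_k(t)=v_\infty(t)$ for $k>t$) is also fine.

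The step that fails as written is the assertion that $w'=\bigl(\bigoplus_{k\ge1}v_kv_{k-1}^\dagger,\,VV^\dagger\bigr)$ is a multiplier of $D$. It is not: testing against the element $(1,1)\in D$ would force $\bigoplus_{k\ge1}\bigl(v_kv_{k-1}^\dagger-1\bigr)$ to lie in $I$, but at each fixed $t$ infinitely many of these blocks equal $v_\infty(t)v_\infty(t)^\dagger-1\neq 0$, and their matrix entries (built from $w_zw_z^\dagger-1$) are not compact-operator valued, so this element is a multiplier of $I$ but never lies in $I$. Hence Lemma~\ref{lemMult} cannot be applied with codomain $D$, and the conclusion $(\Theta')_*=\Theta_*$ is not yet justified. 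The missing ingredient --- which is exactly the extra structure in the paper's proof --- is to cut down to a subalgebra of $D$ containing the images of the maps being conjugated, namely $C=\{(c,d)\in D:\ d=\bigoplus_n v_\infty av_\infty^\dagger\ \text{for some}\ a\in A^s(G)\}$ (if you shift first, as you do, you must enlarge $C$ by $I\oplus I$ or adjust the order of the two conjugations, since the shifted image leaves $C$ on the nose; the paper avoids this by intertwining first inside $C$ and shifting afterwards with $(u,u)$, which \emph{is} a multiplier of all of $D$). For $(c,d)\in C$ the problematic term $(w_1'-w_2')d$ has only finitely many nonzero blocks at each time, each sitting in a corner copy of $A^s(G)$, precisely because $v_k(t)=v_\infty(t)$ for $k>t$; so the intertwiner is a multiplier of $C$, Lemma~\ref{lemMult} applies with codomain $C$, and one composes with the inclusion $C\hookrightarrow D$. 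With that amendment your argument is complete and coincides, up to reordering, with the paper's proof.
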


\begin{proof}
Given $a\in A^s(G)$, define \[\alpha(a)=(\bigoplus_{n=0}^\infty v_nav_n^\dagger ,\bigoplus_{n=0}^\infty v_\infty av_\infty^\dagger )\in A^s(G;\mathcal{K}_\Gamma^\infty)\oplus A^s(G;\mathcal{K}_\Gamma^\infty).\] Also define \[\beta(a)=(\bigoplus_{n=0}^\infty v_{n+1}av_{n+1}^\dagger ,\bigoplus_{n=0}^\infty v_\infty av_\infty^\dagger )\in A^s(G;\mathcal{K}_\Gamma^\infty)\oplus A^s(G;\mathcal{K}_\Gamma^\infty).\] Let $D$ be the double of $M(A^s(G;\mathcal{K}_\Gamma^\infty))$ along $A^s(G;\mathcal{K}_\Gamma^\infty)$, and let \[C=\{(c,d)\in D:d=\bigoplus_{n=0}^\infty v_\infty av_\infty^\dagger \;\text{for some}\;a\in A^s(G)\},\] which is a closed subalgebra of $D$. Moreover, $\alpha$ and $\beta$ are bounded homomorphisms with image in $C$. 
Consider $w=(w_1,w_2)$, where $w_1=\bigoplus_{n=0}^\infty v_{n+1}v_n^\dagger $ and $w_2=\bigoplus_{n=0}^\infty v_\infty v_\infty^\dagger $. Note that $w_1,w_2\in M(A^s(G;\mathcal{K}_\Gamma^\infty))$. We claim that $w$ is a multiplier of $C$. Indeed, if $(c,d)\in C$, then $w_2d=dw_2=d$ so it suffices to show that $cw_1-d$ and $w_1c-d$ are in $A^s(G;\mathcal{K}_\Gamma^\infty)$. We will only consider $w_1c-d$ since the other case is similar. Now $w_1c-d=w_1(c-d)+(w_1d-d)$ so it suffices to show that $w_1d-d\in A^s(G;\mathcal{K}_\Gamma^\infty)$. But $w_1d-d=(w_1-w_2)d=\bigoplus_{n=0}^\infty(v_{n+1}v_n^\dagger -v_\infty v_\infty^\dagger )v_\infty a v_\infty^\dagger \in A^s(G;\mathcal{K}_\Gamma^\infty)$ since $v_n(t)=v_\infty(t)$ for each fixed $t$ and all $n>t$. Similarly, $w^\dagger =(w_1^\dagger ,w_2^\dagger )$ is a multiplier of $C$. 

Now $\beta(a)=w\alpha(a)w^\dagger $ for all $a\in A$. Moreover, $\alpha(a)w^\dagger w=\alpha(a)=w^\dagger w\alpha(a)$ for all $a\in A$ so $\alpha$ and $\beta$ induce the same map $K_*(A^s(G))\rightarrow K_*(C)$ by Lemma \ref{lemMult}, and thus the same map $K_*(A^s(G))\rightarrow K_*(D)$ upon composing with the map induced by the inclusion of $C$ into $D$.

Let $u$ be the isometric multiplier of $A^s(G;\mathcal{K}_\Gamma^\infty)$ induced by the right shift. Then $(u,u)$ is a multiplier of $D$, and conjugating $\beta(a)$ by $(u,u)$ gives \[\gamma(a)=\biggl(0\oplus\bigoplus_{n=1}^\infty v_nav_n^\dagger ,0\oplus\bigoplus_{n=1}^\infty v_\infty av_\infty^\dagger \biggr).\] Thus $\beta$ and $\gamma$ induce the same map $K_*(A)\rightarrow K_*(D)$. On the other hand, the homomorphism $\delta:A^s(G)\rightarrow D$ given by \[a\mapsto(v_\infty av_\infty^\dagger \oplus 0\oplus 0\oplus\cdots,v_\infty av_\infty^\dagger \oplus 0\oplus 0\oplus\cdots)\] induces the zero map on $K$-theory by the previous lemma. Also, $\gamma(a)\delta(a)=\delta(a)\gamma(a)=0$. Hence \[\alpha_*=\beta_*=\gamma_*=\gamma_*+\delta_*=(\gamma+\delta)_*:K_*(A^s(G))\rightarrow K_*(D).\]

Let $\psi_0,\psi_\infty:A^s(G)\rightarrow D$ be the homomorphisms defined by \begin{align*} \psi_0(a) &=(v_0 av_0^\dagger \oplus 0\oplus 0\oplus\cdots,0), \\ \psi_\infty(a) &=(v_\infty av_\infty^\dagger \oplus 0\oplus 0\oplus\cdots,0).\end{align*} Also define $\zeta:A^s(G)\rightarrow D$ by \[\zeta(a)=\biggl(0\oplus\bigoplus_{n=1}^\infty v_nav_n^\dagger ,\bigoplus_{n=0}^\infty v_\infty av_\infty^\dagger \biggr).\] Note that $\zeta(a)\psi_0(a)=\psi_0(a)\zeta(a)=\zeta(a)\psi_\infty(a)=\psi_\infty(a)\zeta(a)=0$ for all $a\in A^s(G)$. Also, $\psi_0+\zeta=\alpha$ and $\psi_\infty+\zeta=\gamma+\delta$. Hence \[(\psi_0)_*+\zeta_*=\alpha_*=(\gamma+\delta)_*=(\psi_\infty)_*+\zeta_*,\] so $\psi_0$ and $\psi_\infty$ induce the same maps on $K$-theory.

Finally, if $\iota:A^s(G)\rightarrow D$ is the inclusion into the first factor (where $D$ is now regarded as the double of $M(A^s(G))$ along $A^s(G)$), then $\psi_i(a)$ is given by the composition
\[ a\mapsto v_iav_i^\dagger \stackrel{\iota}{\mapsto}(v_iav_i^\dagger ,0)\mapsto (v_iav_i^\dagger \oplus 0\oplus 0\oplus\cdots,0), \] and the last two maps induce isomorphisms on $K$-theory, so $a\mapsto v_0av_0^\dagger $ and $a\mapsto v_\infty av_\infty^\dagger$ induce the same map on $K$-theory.
\end{proof}

Now it remains to be shown that $a\mapsto v_\infty av_\infty^\dagger $ induces the identity map on $K_*(A^s(G))$ while $a\mapsto v_0 av_0^\dagger $ induces the zero map on $K_*(A^s(G))$. This will complete the proof of Proposition \ref{baseprop}.

\begin{lem}
The map $\phi_\infty:K_*(A^s(G))\rightarrow K_*(A^s(G))$ induced by conjugation by $v_\infty$ is the identity map.
\end{lem}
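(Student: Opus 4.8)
The plan is to identify $\phi_\infty$ with conjugation by an isometric multiplier and then appeal to Lemma \ref{lemMult}. Since $n=\infty$, the transition region $n<m<2n$ in the definition of $v_n(t)$ is empty, so $v_\infty(t)=w(0)$ for every $t\geq 0$; hence $\phi_\infty$ is the map on $K$-theory induced by the time-constant homomorphism $a\mapsto v_\infty a v_\infty^{\dagger}$ on $A^s(G)$, and $v_\infty^{\dagger}v_\infty=w(0)^{\dagger}w(0)=I$.

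The crux — and the step I expect to be the main obstacle — is to check that $v_\infty$ is an isometric \emph{multiplier} of $A^s(G)$, i.e.\ that $v_\infty a$ and $a v_\infty$ (and the same with $v_\infty^{\dagger}$) lie in $A^s(G)$ whenever $a\in A^s(G)$, and not merely that $v_\infty a v_\infty^{\dagger}$ does. Here one uses that $w(0)$ is diagonal with respect to the $Z_G$-indexing: it fixes each vertex $z$ and only carries the auxiliary $\ell^p$-coordinate isometrically into the summand $E_z$. Thus multiplying an element of $\mathbb{C}_{L,0}[G;s]$ on either side by $w(0)$ or $w(0)^{\dagger}$ changes neither its $\Gamma$-propagation nor its Rips-propagation, preserves $X$-local compactness, and visibly preserves uniform continuity and the condition $a(0)=0$; the only thing that genuinely requires care is that one must have chosen the subspaces $E_z$ and the isometries $w_z$ compatibly with the $\Gamma$-action on $E_G$, so that the result is still $\Gamma$-invariant, i.e.\ lands back in $A^s(G)$ and not just in $B(L^p([0,\infty),E_G))$. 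If that direct verification turns out to be awkward, the clean workaround is to amplify first: pass through the isometric isomorphism $A^s(G)\cong A^s(G;\mathcal{K}_\Gamma^\infty)$ afforded by a fixed $\phi\colon\ell^p\xrightarrow{\cong}(\bigoplus_{n}\ell^p)_{p}$, inside which $v_\infty$-conjugation is implemented by an honest isometric multiplier.

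Granting that $v_\infty$ is an isometric multiplier, one finishes by applying Lemma \ref{lemMult} with $A=C=A^s(G)$, with $\alpha$ the identity homomorphism of $A^s(G)$, and with $v=v_\infty$: the hypotheses hold because $v_\infty$ is an isometric multiplier, so $a\mapsto v_\infty\alpha(a)v_\infty^{\dagger}=v_\infty a v_\infty^{\dagger}$ and $\alpha=\mathrm{id}$ induce the same map on $K$-theory. That is, $\phi_\infty$ is the identity map on $K_*(A^s(G))$.
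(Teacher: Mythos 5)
Your argument is essentially the paper's own: its proof likewise observes that $v_\infty(t)=w(0)$ for all $t$, notes that $w(0)$ is an isometric multiplier of $A^s(G)$, and concludes via Lemma \ref{lemMult} (applied with $\alpha=\mathrm{id}$) that conjugation by $v_\infty$ induces the identity on $K_*(A^s(G))$. Your additional discussion of why $w(0)$ multiplies $A^s(G)$ into itself (diagonality in the $Z_G$-coordinate, compatibility of the choice of the $w_z$ with the $\Gamma$-action) just spells out a point the paper asserts without comment, so the route is the same.
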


\begin{proof}
Since $v_\infty(t)=w(0)$ for all $t$, and $w(0)$ is an isometric multiplier of $A^s(G)$, $\phi_\infty$ induces the identity map on $K$-theory by Lemma \ref{lemMult}.
\end{proof}

\begin{lem}
The map $\phi_0:K_*(A^s(G))\rightarrow K_*(A^s(G))$ induced by conjugation by $v_0$ is the zero map.
\end{lem}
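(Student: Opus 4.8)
The plan is to show that conjugation by $v_0$ is, up to homotopy of homomorphisms and after passing to $K$-theory, the inclusion of an algebra with trivial $K$-theory, so that the induced map vanishes.

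First I would exploit the degenerate shape of $v_0$. The defining recipe gives $v_0(t)=w(1)$ for every $t\ge 1$, $v_0(0)=w(0)$, and an interpolation between them on $[0,1]$. Since every $a\in A^s(G)=B^p_{L,0}(G;s)$ vanishes at $t=0$, the transitional part of $v_0$ on $[0,1]$ is irrelevant to the $K$-theory map: rescaling that transition from $[0,1]$ onto $[0,\sigma]$ and letting $\sigma\to 0$ produces a norm-continuous path of isometry-valued families, each of which conjugates $A^s(G)$ into itself (by the same Rips-propagation estimate used for the $v_n$, together with the fact that $a(0)=0$ makes $t\mapsto w(1)a(t)w(1)^\dagger$ continuous with value $0$ at $0$), so by the homotopy-invariance form of Lemma \ref{lemMult} the map $\phi_0$ equals the map induced by conjugation by the constant family $w(1)$. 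Now $w(1)$ maps $E_G$ isometrically onto $\ell^p(\{e\}\times G^{(0)})\otimes\ell^p\otimes\ell^p(\Gamma)$, collapsing the Rips-complex coordinate onto the barycentre vertex $e$; hence $w(1)a(t)w(1)^\dagger$ has Rips-propagation and $\Gamma$-propagation both equal to $0$ for every $t$, i.e.\ its support as a subset of $\Gamma\ltimes X$ lies in the unit space $G^{(0)}$. Consequently conjugation by $w(1)$ carries $A^s(G)$ into the subalgebra $A^s(G^{(0)})=B^p_{L,0}(G^{(0)};s)$, and $\phi_0$ factors as $K_*(A^s(G))\to K_*(A^s(G^{(0)}))\to K_*(A^s(G))$, the second arrow being induced by the inclusion of the open subgroupoid $G^{(0)}$ of $G$.

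It therefore remains to prove $K_*(A^s(G^{(0)}))=0$. The subgroupoid $G^{(0)}$ carries no nontrivial arrows, so its Rips-complex direction degenerates to the single vertex $e$ over $G^{(0)}$ and its $\Gamma$-propagation is identically zero; unwinding the definitions one gets $B^p(G^{(0)};s)\cong C_0(G^{(0)})\otimes\mathcal K$ and $B^p_L(G^{(0)};s)\cong C_{ub}([0,\infty),\,C_0(G^{(0)})\otimes\mathcal K)$, with evaluation at zero $B^p_L(G^{(0)};s)\to B^p(G^{(0)};s)$ split by the inclusion of constant functions. This is the $L^p$ analogue of the classical situation of Yu's localization algebra for a space carrying no coarse obstruction — the statement, used in the background, that the obstruction algebra of a point has trivial $K$-theory — for which evaluation at zero is a $K$-theory isomorphism; feeding this into the six-term sequence of $0\to A^s(G^{(0)})\to B^p_L(G^{(0)};s)\to B^p(G^{(0)};s)\to 0$ gives $K_*(A^s(G^{(0)}))=0$, and hence $\phi_0=0$. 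The hardest part will be this last vanishing: making precise in the $L^p$ setting why the localization algebra of the trivial subgroupoid $G^{(0)}$ contributes nothing beyond its Roe algebra (equivalently, why $\{f\in C_{ub}([0,\infty),C_0(G^{(0)})\otimes\mathcal K):f(0)=0\}$ is $K$-trivial, which is not a formal reparametrization argument). A secondary point to be checked throughout, exactly as in the rest of Section \ref{sec:Base}, is that the contraction $F_r$ used to build $w(1)$ (and so all of $v_0$) genuinely lands in $P_s(G)$: this is where the hypothesis $G\subseteq\{(gx,g,x)\in\Gamma\ltimes X:l(g)\le s\}$ of Proposition \ref{baseprop} is being used, since it is what makes the vertex $e$ and any vertex of $\supp(z)$ span a simplex.
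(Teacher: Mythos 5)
Your first step --- factoring $\phi_0$ through $K_*(A^s(G^{(0)}))$ via the collapse of the Rips coordinate onto the vertex $e$ --- is exactly the paper's reduction (the paper treats $v_0$ as implementing this collapse directly, so your rescaling homotopy near $t=0$ is harmless but not really needed), and your remark about where the hypothesis $l(g)\le s$ enters is correct. The genuine gap is the step you yourself flag as ``the hardest part'': you do not prove $K_*(A^s(G^{(0)}))=0$, but instead invoke the statement that evaluation at zero $B^p_L(G^{(0)};s)\to B^p(G^{(0)};s)$ is a $K$-theory isomorphism, presented as known background. That statement is not available in this paper: it is precisely an instance of the kind of assembly-type isomorphism the whole paper is devoted to proving, and Proposition \ref{baseprop} (which this lemma is serving) is itself the assertion that obstruction algebras of certain subgroupoids are $K$-trivial. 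Appealing to ``the $L^p$ analogue of the classical situation of Yu's localization algebra'' at this point is therefore essentially circular, or at best defers the entire content of the lemma to an unproved input; moreover your parenthetical claim that the vanishing ``is not a formal reparametrization argument'' is misleading, since the available proof is exactly a reparametrization/shift swindle.

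Concretely, the paper closes this gap as follows, and this is what your proposal is missing. Elements of $A^s(G^{(0)})$ have zero Rips-propagation, so the time-shifts
\[
a^{(n)}(t)=\begin{cases} a(t-n) & t\ge n\\ 0 & t<n\end{cases}
\]
again lie in $A^s(G^{(0)})$ (continuity at $t=n$ uses $a(0)=0$, and no propagation-decay condition can be violated because the propagation is already zero). One then defines $\alpha(a)=\bigoplus_{n\ge 0}a^{(n)}$ as a homomorphism into $A^s(G^{(0)};\mathcal{K}_\Gamma^\infty)$ --- boundedness and equi-uniform continuity of the family $(a^{(n)})_n$ make this well defined --- and compares it with the top-corner inclusion $\iota$. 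Uniform continuity of $a$ yields $\alpha_*+\iota_*=\alpha_*$ (the usual Eilenberg swindle: $\bigoplus_n a^{(n)}$ is homotopic, through shifts $a^{(n+s)}$, to $\bigoplus_n a^{(n+1)}$, which is conjugate to $0\oplus\alpha(a)$), so $\iota_*=0$; since $\iota$ induces an isomorphism on $K$-theory, $K_*(A^s(G^{(0)}))=0$. If you replace your appeal to the evaluation-at-zero isomorphism by this swindle (or supply an equivalent direct proof that $\{f\in C_{ub}([0,\infty),C_0(G^{(0)})\otimes\mathcal{K}):f(0)=0\}$ is $K$-trivial), your argument is complete and agrees with the paper's.
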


\begin{proof}
Let $G^{(0)}$ be the unit space of $G$, which is an open subgroupoid of $\Gamma\ltimes X$. We may then consider $A^s(G^{(0)})$. In fact, $\phi_0$ factors through $K_*(A^s(G^{(0)}))$, i.e., we have a commutative diagram
\[
\begindc{\commdiag}[100]		
\obj(10,5)[2a]{$K_*(A^s(G))$}	\obj(20,5)[2b]{$K_*(A^s(G))$}	

\mor{2a}{2b}{$\phi_0$}	 

\obj(15,0)[3a]{$K_*(A^s(G^{(0)}))$}	

\mor{2a}{3a}{}	\mor{3a}{2b}{} 	
\enddc
\]
so it suffices to show that $K_*(A^s(G^{(0)}))=0$.

Note that $a(t)$ has zero Rips-propagation for all $a\in A^s(G^{(0)})$ and $t\in[0,\infty)$. For each $n\in\mathbb{N}$ and $a\in A^s(G^{(0)})$, define \[a^{(n)}(t)=\begin{cases} a(t-n) & \text{for}\; t\geq n \\ 0 & \text{for}\; t<n \end{cases}.\]
Note that $a^{(n)}\in A^s(G^{(0)})$. Now define $\alpha:A^s(G^{(0)})\rightarrow A^s(G^{(0)};\mathcal{K}_\Gamma^\infty)$ by $a\mapsto \bigoplus_{n=0}^\infty a^{(n)}$. 
We also have the ``top corner inclusion'' $\iota:A^s(G^{(0)})\rightarrow A^s(G^{(0)};\mathcal{K}_\Gamma^\infty)$ given by $a\mapsto a\oplus 0\oplus 0\oplus\cdots$. 
Using uniform continuity of elements in $A^s(G^{(0)})$, we see that $\alpha_*+\iota_*=\alpha_*$ so $\iota_*=0$. But $\iota$ induces an isomorphism on $K$-theory so it follows that $K_*(A^s(G^{(0)}))=0$.
\end{proof}

\subsection{Mayer-Vietoris} \label{sec:Ind}

Given two open subgroupoids of $\Gamma\ltimes X$, we will consider associated subalgebras of $A^s:=B^p_{L,0}(\Gamma\curvearrowright X;s)$, following \cite[Section 7]{GWY2}, and also controlled Mayer-Vietoris pairs of ideals for these algebras. Note that the filtrations we equip these subalgebras with are not the induced filtrations from $A^s$. 

\begin{defn} \label{idealdef}
Fix an open subgroupoid $G$ of $\Gamma\ltimes X$, and fix $s_0\geq 1$. Let $G_0$ and $G_1$ be open subgroupoids of $G$ such that $G^{(0)}=G_0^{(0)}\cup G_1^{(0)}$. For $r\geq 0$, define
\[ \mathfrak{A}_r(G)=A^{s_0}(G_0^{+r})_r+A^{s_0}(G_1^{+r})_r+A^{s_0}(G_0^{+r}\cap G_1^{+r})_r, \] 
where all expansions are taken relative to $G$,
and define \[ \mathfrak{A}(G)=\overline{\bigcup_{r\geq 0}\mathfrak{A}_r(G)}, \]
taking closure in the norm of $A^{s_0}$.

Also define 
\begin{align*} 
I_r&=A^{s_0}(G_0^{+r})_r+A^{s_0}(G_0^{+r}\cap G_1^{+r})_r, \quad I=\overline{\bigcup_{r\geq 0}I_r} \\ J_r&=A^{s_0}(G_1^{+r})_r+A^{s_0}(G_0^{+r}\cap G_1^{+r})_r, \quad J=\overline{\bigcup_{r\geq 0}J_r}.
\end{align*}

Let $G,G_0,G_1$ and $s_0$ be as above. Define another filtration as follows. For $s\geq s_0$ and $r\geq 0$, define
\[ \mathfrak{A}_r^s(G)=A^{s_0}(G_0^{+r})_r+A^{s_0}(G_1^{+r})_r+A^s(G_0^{+r}\cap G_1^{+r})_{sr}, \]
and \[ \mathfrak{A}^s(G)=\overline{\bigcup_{r\geq 0}\mathfrak{A}_r^s(G)}, \]
taking closure in the norm of $A^s$.

Also define 
\begin{align*} 
I_r^s&=A^{s_0}(G_0^{+r})_r+A^s(G_0^{+r}\cap G_1^{+r})_{sr}, \quad I^s=\overline{\bigcup_{r\geq 0}I_r^s} \\ J_r^s&=A^{s_0}(G_1^{+r})_r+A^s(G_0^{+r}\cap G_1^{+r})_{sr}, \quad J^s=\overline{\bigcup_{r\geq 0}J_r^s}.
\end{align*}
\end{defn}

\begin{lem}
With notation as above, $(\mathfrak{A}_r(G))_{r\geq 0}$ and $(\mathfrak{A}_r^s(G))_{r\geq 0}$ are filtrations for $\mathfrak{A}(G)$ and $\mathfrak{A}^s(G)$ respectively. Moreover, $I$ and $J$ are ideals in $\mathfrak{A}(G)$, while $I^s$ and $J^s$ are ideals in $\mathfrak{A}^s(G)$.
\end{lem}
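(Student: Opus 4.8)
\emph{Strategy and notation.} The plan is to prove both assertions in parallel, the superscript-$s$ version being the delicate one and the plain version being the same bookkeeping with $s_0$ everywhere in place of $s$. Abbreviate the three summands of $\mathfrak{A}_r^s(G)$ by $P_r=A^{s_0}(G_0^{+r})_r$, $Q_r=A^{s_0}(G_1^{+r})_r$, and $R_r=A^s(G_0^{+r}\cap G_1^{+r})_{sr}$, so that $\mathfrak{A}_r^s(G)=P_r+Q_r+R_r$, $I_r^s=P_r+R_r$, $J_r^s=Q_r+R_r$. I will use freely the following elementary consequences of the definitions: expansion is monotone in each slot, so $H\subseteq H'$ gives $H^{+r}\subseteq H'^{+r}$ and $r\le r'$ gives $H^{+r}\subseteq H^{+r'}$; combined with the inclusion $(H^{+r_1})^{+r_2}\subseteq H^{+(r_1+r_2)}$ established above, this yields $(G_0^{+r_1}\cap G_1^{+r_1})^{+r_2}\subseteq G_0^{+(r_1+r_2)}\cap G_1^{+(r_1+r_2)}$. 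Likewise $H\subseteq H'$ gives $A^s(H)_r\subseteq A^s(H')_r$, and $A^{s_0}(H)_r\subseteq A^s(H)_r$ when $s_0\le s$; and since the subspaces $A^s(H)_r$ are genuine subspaces of $\mathbb{C}_{L,0}[H;s]$ (not closures), one has the exact identity $A^s(H)_r\cap A^s(H')_r=A^s(H\cap H')_r$. Finally each $G_i^{+r}$, being a subgroupoid, satisfies $(G_i^{+r})(G_i^{+r})\subseteq G_i^{+r}$ as subsets of $\Gamma\ltimes X$.

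\emph{Filtration axioms.} Nestedness is immediate: for $r_1\le r_2$ each of $P_{r_1},Q_{r_1},R_{r_1}$ sits inside the corresponding summand at level $r_2$ by monotonicity of $H\mapsto H^{+r}$ in $r$ and of the propagation cutoffs, and density of $\bigcup_r\mathfrak{A}_r^s(G)$ in $\mathfrak{A}^s(G)$ holds by construction. For multiplicativity one expands $\mathfrak{A}_{r_1}^s(G)\,\mathfrak{A}_{r_2}^s(G)$ into nine products and shows each lies in $\mathfrak{A}_{r_1+r_2}^s(G)$. The same-side products $P_{r_1}P_{r_2}$, $Q_{r_1}Q_{r_2}$ land in $P_{r_1+r_2}$, $Q_{r_1+r_2}$ by Lemma \ref{filtlem}(ii), after enlarging the expansions to $G_0^{+(r_1+r_2)}$ (resp. $G_1^{+(r_1+r_2)}$). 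The opposite-side products $P_{r_1}Q_{r_2}$, $Q_{r_1}P_{r_2}$ are treated by applying Lemma \ref{pdtlem} twice, first with the $G_1$-factor viewed inside $A^{s_0}(G)_{r_2}$ and then with the $G_0$-factor viewed inside $A^{s_0}(G)_{r_1}$, to land in $A^{s_0}(G_0^{+(r_1+r_2)})_{r_1+r_2}\cap A^{s_0}(G_1^{+(r_1+r_2)})_{r_1+r_2}=A^{s_0}(G_0^{+(r_1+r_2)}\cap G_1^{+(r_1+r_2)})_{r_1+r_2}\subseteq R_{r_1+r_2}$, using that $\Gamma$-propagations add (Lemma \ref{supplem}) together with $s_0\le s$ and $r_1+r_2\le s(r_1+r_2)$. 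For a single-side factor against an intersection factor, say $P_{r_1}R_{r_2}$, I apply Lemma \ref{pdtlem} with the single-side factor $P_{r_1}$ (whose propagation is $\le r_1$, \emph{not} $sr_1$) as the low-propagation one, so the expansion is only by $r_1$: this gives containment in $A^s\bigl((G_0^{+r_2}\cap G_1^{+r_2})^{+r_1}\bigr)\subseteq A^s(G_0^{+(r_1+r_2)}\cap G_1^{+(r_1+r_2)})$, with propagation at most $r_1+sr_2\le s(r_1+r_2)$, hence $\subseteq R_{r_1+r_2}$; the products $R_{r_1}P_{r_2}$, $Q_{r_1}R_{r_2}$, $R_{r_1}Q_{r_2}$ are the same.

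\emph{The delicate piece.} This leaves $R_{r_1}R_{r_2}$, where a blind application of Lemma \ref{pdtlem} would expand by $sr$ and overshoot $G_i^{+(r_1+r_2)}$; here one argues directly with supports. For $a$ in the first factor and $b$ in the second, $\supp(a(t)b(t))\subseteq\supp(a(t))\supp(b(t))$ by Lemma \ref{supplem}, and the right-hand side is a relatively compact subset of $(G_0^{+r_1}\cap G_1^{+r_1})(G_0^{+r_2}\cap G_1^{+r_2})$. Since both intersection groupoids lie inside $G_0^{+(r_1+r_2)}$, which is closed under composition, the product lies in $G_0^{+(r_1+r_2)}$; symmetrically it lies in $G_1^{+(r_1+r_2)}$; and $\Gamma$-propagations add to at most $sr_1+sr_2=s(r_1+r_2)$, so $ab\in R_{r_1+r_2}$. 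This completes the verification that $(\mathfrak{A}_r^s(G))_{r\ge0}$ is a filtration; the plain family $(\mathfrak{A}_r(G))_{r\ge0}$ is handled identically — indeed more easily, since there is no propagation mismatch and the $R\cdot R$ step is then also covered by Lemma \ref{pdtlem}. In particular $\mathfrak{A}(G)$ and $\mathfrak{A}^s(G)$ are closed subalgebras.

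\emph{Ideals.} Since $I_r^s=P_r+R_r$ and $\mathfrak{A}_r^s(G)=P_r+Q_r+R_r$, the nine computations above show that no product of a piece of $\mathfrak{A}_{r_1}^s(G)$ with a piece of $I_{r_2}^s$ lands in a $Q$-summand — that can only occur for $Q\cdot Q$, which does not arise because $I_{r_2}^s$ has no $Q$-part — so $\mathfrak{A}_{r_1}^s(G)\,I_{r_2}^s\subseteq P_{r_1+r_2}+R_{r_1+r_2}=I_{r_1+r_2}^s$ and likewise on the other side. Passing to norm closures and using joint continuity of multiplication gives $\mathfrak{A}^s(G)\,I^s\subseteq I^s$ and $I^s\,\mathfrak{A}^s(G)\subseteq I^s$, so $I^s$ is a closed ideal; the cases of $J^s$, $I$, $J$ are identical after interchanging $G_0\leftrightarrow G_1$ (resp. dropping the superscript $s$). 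The only genuinely new feature compared with the $C^*$-algebraic argument of \cite{GWY2} is the propagation mismatch built into the superscript-$s$ filtration (the intersection summand carries cutoff $sr$ rather than $r$), and this is precisely why the $R_{r_1}R_{r_2}$ product must be handled by the hands-on support estimate rather than by the controlled product Lemma \ref{pdtlem}; that is the one step where a little extra care is needed.
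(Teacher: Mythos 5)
Your proof is correct and follows essentially the same route as the paper: nestedness and density are immediate, and the filtration and ideal properties are reduced to the same summand-by-summand product containments, established via Lemma \ref{supplem}, Lemma \ref{pdtlem}, and the composition rules for expansions, with the mixed single-side/intersection term expanded only by the small propagation parameter so that $r_1+sr_2\le s(r_1+r_2)$. The only difference is presentational: you spell out the intersection-times-intersection product in the $s$-version by a direct support argument (correctly noting that a naive application of Lemma \ref{pdtlem} would expand by $sr$ and overshoot), whereas the paper subsumes this containment under its blanket citation of Lemmas \ref{supplem} and \ref{pdtlem}.
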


\begin{proof}
It is clear that $\mathfrak{A}_{r_0}(G)\subset \mathfrak{A}_r(G)$ if $r_0\leq r$, that $\bigcup_{r\geq 0}\mathfrak{A}_r(G)$ is dense in $\mathfrak{A}(G)$, and similarly for the $s$-version. By Lemmas \ref{supplem} and \ref{pdtlem}, it follows that for $r_1,r_2\geq 0$ and $s_0\geq 1$, \[ A^{s_0}(G_i^{+r_1})_{r_1}\cdot A^{s_0}(G_i^{+r_2})_{r_2}\subset A^{s_0}(G_i^{+(r_1+r_2)})_{r_1+r_2} \] for $i=0,1$, while 
\[
A^{s_0}(G_0^{+r_1})_{r_1}\cdot A^{s_0}(G_1^{+r_2})_{r_2} \subset A^{s_0}(G_0^{+(r_1+r_2)})_{r_1+r_2}\cap A^{s_0}(G_1^{+(r_1+r_2)})_{r_1+r_2}, \]\[
A^{s_0}(G_0^{+r_1}\cap G_1^{+r_1})_{r_1}\cdot A^{s_0}(G_0^{+r_2}\cap G_1^{+r_2})_{r_2} \subset A^{s_0}(G_0^{+(r_1+r_2)}\cap G_1^{+(r_1+r_2)})_{r_1+r_2}.
\]
Also, for $s\geq s_0$,
\begin{align*} 
A^s(G_0^{+r_1}\cap G_1^{+r_1})_{sr_1}\cdot A^s(G_0^{+r_2}\cap G_1^{+r_2})_{sr_2} &\subset A^s(G_0^{+(r_1+r_2)}\cap G_1^{+(r_1+r_2)})_{s(r_1+r_2)}
\end{align*} 
and
\begin{align*} A^{s_0}(G_i^{+r_1})_{r_1}\cdot A^s(G_0^{+r_2}\cap G_1^{+r_2})_{sr_2}&\subset A^s((G_0^{+r_2}\cap G_1^{+r_2})^{+r_1})_{r_1+sr_2} \\ &\subset A^s((G_0^{+r_2})^{+r_1}\cap(G_1^{+r_2})^{+r_1})_{s(r_1+r_2)} \\ &\subset A^s(G_0^{+(r_1+r_2)}\cap G_1^{+(r_1+r_2)})_{s(r_1+r_2)}. \end{align*}
Hence $(\mathfrak{A}_r(G))_{r\geq 0}$ and $(\mathfrak{A}_r^s(G))_{r\geq 0}$ are filtrations for $\mathfrak{A}(G)$ and $\mathfrak{A}^s(G)$ respectively, $I$ and $J$ are ideals in $\mathfrak{A}(G)$, while $I^s$ and $J^s$ are ideals in $\mathfrak{A}^s(G)$.
\end{proof}

Now we need to check that the ideals in Definition \ref{idealdef} satisfy the conditions listed in Definition \ref{MVpair} for our controlled Mayer-Vietoris sequence. To do so, we shall make use of partitions of unity and associated multiplication operators.

\begin{defn}
Let $K$ be a compact subset of $X$, let $\{U_0,\ldots,U_d\}$ be a finite open cover of $K$, and let $\{\phi_0,\ldots,\phi_d\}$ be a subordinate partition of unity. Let $s\geq 0$. For $i\in\{0,\ldots,d\}$, let $M_i$ be the multiplication operator on $E_s$ associated to the function \[ Z_s\times X\rightarrow [0,1], (z,x)\mapsto\sum_{g\in\Gamma}t_g(z)\phi_i(gx). \]
\end{defn}

\begin{lem}
With notation as above, the operators $M_i$ have the following properties:
\begin{enumerate}
\item $||M_i||\leq 1$.
\item If $T\in B^p(\Gamma\curvearrowright X;s)$ satisfies \[\{x\in X:(gx,g,x)\in \supp(T)\;\text{for some}\;g\in\Gamma\}\subset K,\] then $T=T(M_0+\cdots+M_d)$.
\item For any $T\in B^p(\Gamma\curvearrowright X;s)$ with $\Gamma$-propagation at most $r$, and $i\in\{0,\ldots,d\}$, we have \[ \supp(TM_i)\subset\biggl\{ (gx,g,x)\in\Gamma\ltimes X:x\in\bigcup_{l(h)\leq s}h\cdot U_i,l(g)\leq r\biggr\}\cap \supp(T). \]
\end{enumerate}
\end{lem}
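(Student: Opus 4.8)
The plan is to verify each of the three properties in turn, all of which follow from unpacking the definition of the multiplication operator $M_i$ associated to the function $f_i(z,x)=\sum_{g\in\Gamma}t_g(z)\phi_i(gx)$.

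\textbf{Property (i).} First I would observe that each $f_i$ takes values in $[0,1]$: since $\{\phi_i\}$ is a partition of unity we have $0\leq\phi_i\leq 1$, and since $\sum_{g}t_g(z)=1$ with $t_g(z)\in[0,1]$, the value $f_i(z,x)$ is a convex combination of numbers in $[0,1]$, hence lies in $[0,1]$. A multiplication operator on $\ell^p$ by a function bounded by $1$ has norm at most $1$, and tensoring with the identity on the remaining factors of $E_s=\ell^p(Z_s)\otimes\ell^p(X)\otimes\ell^p\otimes\ell^p(\Gamma)$ does not change the norm, so $\|M_i\|\leq 1$.

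\textbf{Property (ii).} The key point is that $\sum_{i=0}^d f_i(z,x)=\sum_{g\in\Gamma}t_g(z)\sum_{i=0}^d\phi_i(gx)=\sum_{g\in\Gamma}t_g(z)$ whenever all the points $gx$ with $t_g(z)\neq 0$ lie in $K$ (since $\sum_i\phi_i\equiv 1$ on $K$). Thus $\sum_i M_i$ acts as multiplication by a function that equals $1$ at each relevant coordinate. Given $T\in B^p(\Gamma\curvearrowright X;s)$ with the stated support condition, I would argue entrywise: $(T(M_0+\cdots+M_d))_{y,z}$ involves $T_{y,z}$ multiplied by the diagonal operator $\sum_i f_i(z,\cdot)$ (acting on the $\ell^p(X)$ factor after one uses that $M_i$ is diagonal in the $Z_s$ index); for each $x$ in the support of $T_{y,z}$, the relevant $g$'s lie in $\supp(z)$, and for those $g$, the point $gx$ lies in $K$ by the hypothesis on $\supp(T)$, so $\sum_i f_i(z,x)=\sum_{g\in\supp(z)}t_g(z)=1$. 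Hence $T(M_0+\cdots+M_d)=T$. One has to be a little careful that $M_i$ is diagonal with respect to the $Z_s\times Z_s$ matrix decomposition but possibly not a scalar on each block, so I would phrase the argument in terms of the action on basis vectors $\delta_z\otimes\delta_x\otimes\eta\otimes\delta_h$ rather than ``entries''.

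\textbf{Property (iii).} This is the main obstacle, being the most delicate of the three. I would compute $\supp(TM_i)$ using Lemma \ref{supplem}: $\supp(TM_i)\subseteq\supp(T)\supp(M_i)$, so it is contained in $\supp(T)$, which together with $\Gamma$-propagation at most $r$ handles the $l(g)\leq r$ constraint and the ``$\cap\supp(T)$'' part. For the constraint on $x$, I would analyze $\supp(M_i)$ directly: since $M_i$ is multiplication by $f_i(z,x)=\sum_{g}t_g(z)\phi_i(gx)$, its matrix is diagonal, so $\supp(M_i)\subseteq\{(x,e,x):f_i(z,x)\neq 0\text{ for some }z\in Z_s\text{ with }(z,x)\in Z_s\}$. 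Now $f_i(z,x)\neq 0$ forces $\phi_i(gx)\neq 0$ for some $g\in\supp(z)$, i.e.\ $gx\in U_i$, i.e.\ $x\in g^{-1}U_i$; and since $g\in\supp(z)$ for a point $z\in Z_s\subseteq P_s(\Gamma)$ whose simplex includes the identity $e$, we get $l(g)=d(g,e)\leq s$, so $x\in\bigcup_{l(h)\leq s}h^{-1}U_i$. (I would double-check the direction of the group action and whether the index set should be written $h\cdot U_i$ or $h^{-1}\cdot U_i$ against the conventions fixed earlier in the paper, matching the statement as given; the metric on $\Gamma$ is right-invariant, so $l(g)=l(g^{-1})$ and the two forms of the union agree as sets, which resolves any ambiguity.) Composing: if $(gx,g,x)\in\supp(TM_i)$, then writing it as a product from $\supp(T)$ and $\supp(M_i)$, the second factor is a unit $(x',e,x')$ with $x'=x$ and $f_i(z,x)\neq 0$ for some relevant $z$, forcing $x\in\bigcup_{l(h)\leq s}h\cdot U_i$, while the whole element lies in $\supp(T)$ and has $l(g)\leq r$. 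This gives exactly the claimed inclusion.
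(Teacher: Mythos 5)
Your parts (i) and (ii) are fine and essentially the paper's argument: the entrywise identity $(TM_i)_{y,z}(x)=T_{y,z}(x)\sum_{h\in\Gamma}t_h(z)\phi_i(hx)$ together with the support hypothesis forces the scalar factor to be $1$.

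Part (iii), however, has a genuine gap, and it is located exactly where you flagged "the main obstacle." Your description of $\supp(M_i)$ is wrong: although $M_i$ is diagonal in the $Z_s$-index, the support of an operator (as defined in this paper) collects \emph{all} elements $(gx,gh^{-1},hx)$ with $g,h\in\supp(z)$ for a single $z$ with nonvanishing entry, so $\supp(M_i)$ contains non-unit elements whenever $\supp(z)$ has more than one point. Consequently your route via Lemma \ref{supplem} does not even give $\supp(TM_i)\subseteq\supp(T)$, nor does it justify $l(g)\leq r$, since the second factor of the product need not be a unit. A second error: you assert that $g\in\supp(z)$ for $z\in P_s(\Gamma)$ forces $l(g)\leq s$ because "the simplex includes the identity" -- it does not; the vertex set of $P_s(\Gamma)$ is all of $\Gamma$ and $\supp(z)$ can sit arbitrarily far from $e$ (the simplex-contains-$e$ observation in Section 5.1 is only valid for the special subgroupoids considered there). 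You also conflate the base point $x$ of the matrix entry with the source point of the groupoid element, which is $kx$ for some $k\in\supp(z)$. The correct argument avoids Lemma \ref{supplem} entirely: if $(gx,gk^{-1},kx)\in\supp(TM_i)$ with $g\in\supp(y)$, $k\in\supp(z)$ and $(TM_i)_{y,z}(x)\neq 0$, then since $(TM_i)_{y,z}(x)=T_{y,z}(x)\sum_{h}t_h(z)\phi_i(hx)$, both factors are nonzero; the first gives $(gx,gk^{-1},kx)\in\supp(T)$ and $l(gk^{-1})\leq r$, and the second gives some $h\in\supp(z)$ with $hx\in U_i$. Since $h,k$ both lie in $\supp(z)$ and $z\in P_s(\Gamma)$, right-invariance yields $l(kh^{-1})=d(k,h)\leq s$, so the source point satisfies $kx=(kh^{-1})hx\in kh^{-1}\cdot U_i$ with $l(kh^{-1})\leq s$ -- the bound $s$ comes from the diameter of the simplex spanned by $\supp(z)$, not from proximity of $\supp(z)$ to the identity.
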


\begin{proof}
Each $M_i$ is a multiplication operator associated to a function taking values in $[0,1]$ so it follows that $||M_i||\leq 1$.

For $i\in\{0,\ldots,d\}$, $T\in B^p(\Gamma\curvearrowright X;s)$, $y,z\in P_s(\Gamma)$, and $x\in X$, we have \[ (TM_i)_{y,z}(x)=T_{y,z}(x)\cdot\sum_{h\in\Gamma}t_h(z)\phi_i(hx). \] 
Hence \[ (T(M_0+\cdots+M_d))_{y,z}(x)=T_{y,z}(x)\cdot\sum_{h\in\Gamma}t_h(z)(\phi_0(hx)+\cdots+\phi_d(hx)). \]

Suppose that $\{x\in X:(gx,g,x)\in \supp(T)\;\text{for some}\;g\in\Gamma\}\subset K$. If $T_{y,z}(x)\neq 0$, then $(gx,gh^{-1},hx)\in \supp(T)$ for all $g\in \supp(y)$ and $h\in \supp(z)$. 
In particular, $hx\in K$ for all $h\in \supp(z)$, so \[ \sum_{h\in\Gamma}t_h(z)(\phi_0(hx)+\cdots+\phi_d(hx))=\sum_{h\in\Gamma}t_h(z)=1, \]
and this proves (ii).

Suppose that $(gx,gk^{-1},kx)\in \supp(TM_i)$, where $T\in B^p(\Gamma\curvearrowright X;s)$ has $\Gamma$-propagation at most $r$. Then there exist $y,z\in P_s(\Gamma)$ with $g\in \supp(y)$, $k\in \supp(z)$, and $(TM_i)_{y,z}(x)\neq 0$. In particular, $T_{y,z}(x)\neq 0$, so $(gx,gk^{-1},kx)\in \supp(T)$ and $l(gk^{-1})\leq r$. We also have $\sum_{h\in\Gamma}t_h(z)\phi_i(hx)\neq 0$, so there exists $h\in \supp(z)$ with $\phi_i(hx)\neq 0$, and thus $hx\in U_i$. Since $h,k\in \supp(z)$, and $z\in P_s(\Gamma)$, we have $l(kh^{-1})\leq s$. Now $kx=(kh^{-1})hx\in kh^{-1}\cdot U_i$, and this proves (iii).
\end{proof}

\begin{lem}
The pairs $(I,J)$ and $(I^s,J^s)$ in Definition \ref{idealdef} are controlled Mayer-Vietoris pairs for $\mathfrak{A}(G)$ and $\mathfrak{A}^s(G)$ respectively.
\end{lem}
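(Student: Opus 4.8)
The plan is to verify the three conditions of Definition \ref{MVpair} for each of the two pairs, working from the concrete description of the filtrations in Definition \ref{idealdef}. I will do the argument for $(I,J)$ inside $\mathfrak{A}(G)$ in detail; the $(I^s,J^s)$ case inside $\mathfrak{A}^s(G)$ is the same argument with $A^s(G_0^{+r}\cap G_1^{+r})_{sr}$ replacing $A^{s_0}(G_0^{+r}\cap G_1^{+r})_r$ throughout, and with the various radii inflated by the factor $s$ where the intersection algebra is involved. The function $\rho$ will be taken linear, something like $\rho(r) = s\cdot r + (\text{constant depending on }s_0)$; a bit of bookkeeping with the expansion lemma $(H^{+r_1})^{+r_2}\subseteq H^{+(r_1+r_2)}$ will pin down the exact constant, but for the plan it suffices that $\rho(r)\geq r$ and that it is finite.

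For condition (i), the decomposition of an element, I would start with $x\in M_n(\mathfrak{A}_r(G))$ and use the partition-of-unity operators: since the relevant supports live over a compact subset $K$ of $X$ (this is built into $X$-local compactness and the compact-support condition defining $\mathbb{C}_{L,0}[G;s]$), choose a finite open cover of $K$ adapted to $G_0^{(0)}$ and $G_1^{(0)}$ — say $U_0$ refining $G_0^{(0)}$ and $U_1$ refining $G_1^{(0)}$, which is possible because $G^{(0)}=G_0^{(0)}\cup G_1^{(0)}$ — with subordinate partition of unity $\{\phi_0,\phi_1\}$ and associated multipliers $M_0,M_1$ with $M_0+M_1$ acting as the identity on $x$. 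Then set $x_1 = xM_0$ and $x_2 = xM_1$; part (iii) of the multiplier lemma forces $\supp(x_1(t))$ into the set of arrows over $\bigcup_{l(h)\leq s_0}h\cdot U_0$ with $\Gamma$-propagation $\leq r$, which lands in $G_0^{+(r+\text{stuff})}$, hence $x_1\in M_n(I\cap A_{\rho(r)})$, and similarly $x_2\in M_n(J\cap A_{\rho(r)})$; the norm bound $\|x_i\|\leq\|x\|$ comes from $\|M_i\|\leq 1$. Condition (ii) — that $I$ and $J$ are themselves filtered with the induced filtrations $I\cap A_r$, $J\cap A_r$ — is essentially immediate from the previous lemma once one notes $I_r\subseteq I\cap \mathfrak{A}_r(G)$ with a controlled reverse inclusion, using again that an element of $I$ of propagation $\leq r$ can be split by the $M_i$'s to show it already lies (up to closure) in $I_{\rho(r)}$.

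Condition (iii), the approximate intersection property, is where I expect the real work. Given $x\in M_n(I\cap A_r)$ and $y\in M_n(J\cap A_r)$ with $\|x-y\|<\varepsilon$, I want $z\in M_n(I\cap J\cap A_{\rho(r)})$ close to both. The natural candidate is $z = xM_0 + yM_1$ (or a symmetrized version): $\|z-x\| = \|yM_1 - xM_1\| = \|(y-x)M_1\|<\varepsilon$ and likewise $\|z-y\|<\varepsilon$, using $\|M_i\|\leq 1$ and $M_0+M_1$ acting as the identity on both $x$ and $y$. The content is then that $z$ actually lies in the intersection algebra $I\cap J$ with controlled propagation: $xM_0$ has support over $G_0$-type arrows and also, since $x\in I$ which already has a $G_0^{+r}\cap G_1^{+r}$ component, one must track that multiplying by $M_0$ keeps the ``$G_1$-part'' intact while confining the ``$G_0$-part'' appropriately — combining support estimates from Lemma \ref{supplem}, Lemma \ref{pdtlem}, the multiplier lemma part (iii), and the expansion-composition lemma. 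The main obstacle is precisely this careful support/propagation accounting: ensuring the cover $\{U_0,U_1\}$ of $K$ can be chosen so that $\bigcup_{l(h)\leq s_0}h\cdot U_i$ stays inside $G_i^{(0)}$ (which may require first shrinking to a relatively compact situation and using that $G_i^{(0)}$ is open), and then propagating the radius bounds through all the intersections without the constant blowing up uncontrollably — it must remain a fixed function $\rho$ of $r$ independent of $n$, $\varepsilon$, and the particular elements. Once that bookkeeping is done, all three conditions hold and the lemma follows; the $s$-versions are identical modulo the extra factor of $s$ already visible in the filtration definitions.
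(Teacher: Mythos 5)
Your overall strategy (partition-of-unity multipliers $M_0,M_1$ cutting elements down to the two sides, plus support bookkeeping via the multiplier lemma and the expansion lemmas) is the same as the paper's, and your treatment of condition (i) is essentially right in outline: after writing $a=b_0+b_1+c$ according to the three summands of $\mathfrak{A}_{r_0}(G)$, the only term needing work is the $G_1$-summand $b_1$ multiplied by $M_0$, and the factorization of each supporting arrow through a nearby unit of ($G_0^{+r_0}$, in the paper's choice of cover $U_i=(G_i^{+r_0})^{(0)}$) puts it in $A^{s_0}(G_0^{+(2r_0+s_0)})_{2r_0+s_0}\subseteq I_{2r_0+s_0}$, giving $\rho(r)=2r+s_0$. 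Note, though, that to run this you should re-run the argument inside the multiplier lemma (the element $k\in\supp(z)$ with $\phi_0(kx)\neq 0$, so that the connecting arrows $(gx,gk^{-1},kx)$ and $(kx,kh^{-1},hx)$ are visibly in $G$), rather than just quote its statement about $x\in\bigcup_{l(h)\leq s_0}h\cdot U_0$: membership in the expansion $G_0^{+(2r_0+s_0)}$ relative to $G$ needs the connecting arrow to lie in $G$.

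The genuine gap is in condition (iii): your candidate $z=xM_0+yM_1$ (with $x\in I$, $y\in J$) pairs the cutoffs the wrong way around. It is indeed $\varepsilon$-close to both $x$ and $y$, but it does \emph{not} lie in $I\cap J$: the term $xM_0$ already has support in $G_0^{+r}$ and multiplying by the $G_0$-side cutoff gains nothing toward membership in $J$, so no amount of support accounting will place it in the intersection. The correct element is the \emph{cross} pairing $z=xM_1+yM_0$, exactly as in the paper (there $b(t)=a_0(t)M_1+a_1(t)M_0$): cutting the $I$-element down near the unit space of $G_1^{+r_0}$ forces, by the same factorization-through-a-nearby-unit argument as in condition (i), its support into $G_0^{+(2r_0+s_0)}\cap G_1^{+(2r_0+s_0)}$, hence $xM_1\in A^{s_0}(G_0^{+(2r_0+s_0)}\cap G_1^{+(2r_0+s_0)})_{2r_0+s_0}\subseteq I_{2r_0+s_0}\cap J_{2r_0+s_0}$, and symmetrically for $yM_0$; closeness still follows from $\|z-x\|=\|(y-x)M_0\|$ and $\|z-y\|=\|(x-y)M_1\|$. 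Relatedly, the ``main obstacle'' you identify --- choosing the cover so that $\bigcup_{l(h)\leq s_0}h\cdot U_i$ stays inside $G_i^{(0)}$ --- is neither achievable in general nor needed: the whole point of defining $I$, $J$, $\mathfrak{A}(G)$ via the expansions $G_i^{+r}$ (with the radius allowed to grow to $2r+s_0$) is that this fattening is absorbed into the expansion rather than kept inside $G_i$. Finally, the paper dispenses with your condition (ii) discussion by construction (the filtrations of $I$ and $J$ are the ones built into their definition), so no ``controlled reverse inclusion'' argument is required there.
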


\begin{proof}
By virtue of how $I,J,I^s,J^s$, and $\mathfrak{A}(G),\mathfrak{A}^s(G)$ are defined, we only need to check conditions (i) and (iii) in Definition \ref{MVpair}, which we will do for $I,J,\mathfrak{A}(G)$ only since the $s$-version can be handled in a similar manner.

Let $U_i$ be the unit space of $G_i^{+r_0}$ for $i=0,1$. If $a\in \mathfrak{A}_{r_0}(G)$, then \[ K:=\overline{\{x\in X:(gx,g,x)\in \supp(a(t))\;\text{for some}\;t\in[0,\infty),g\in\Gamma\}} \] is a compact subset of $U_0\cup U_1$. Let $M_0,M_1$ be the multiplication operators defined with respect to the compact set $K$, the open cover $\{U_0,U_1\}$, and some choice of subordinate partition of unity $\{\phi_0,\phi_1\}$. By the previous lemma, we have $a(t)(M_0+M_1)=a(t)$ for all $t$. Moreover, $||a(t)M_i||\leq||a(t)||$ for $i=0,1$. It remains to be shown that $t\mapsto a(t)M_0$ is in $I_r$ and $t\mapsto a(t)M_1$ is in $J_r$ for some $r\geq r_0$ (that may depend on $s_0$ but not on $s$). We will focus on the case of $M_0$ since the other case is similar.

Write $a=b_0+b_1+c$ with $b_i\in A^{s_0}(G_i^{+r_0})_{r_0}$ and $c\in A^s(G_0^{+r_0}\cap G_1^{+r_0})$. By the previous lemma, we have $\supp(b_0(t)M_0) \subset \supp(b_0(t))$ and also $\supp(c(t)M_0)\subset \supp(c(t))$ so $t\mapsto b_0(t)M_0$ is in $A^{s_0}(G_0^{+r_0})_{r_0}\subset I_{r_0}$ and $t\mapsto c(t)M_0$ is in $A^s(G_0^{+r_0}\cap G_1^{+r_0})\subset I_{r_0}$. 

Now assume that $(gx,gh^{-1},hx)\in \supp(b_1(t)M_0)$ for some $t$. Then there exist $y,z\in P_s(\Gamma)$ such that $g\in \supp(y),h\in \supp(z)$, and $(b_1(t)M_0)_{y,z}(x)\neq 0$. In particular, $(b_1(t))_{y,z}(x)\neq 0$ so $y,z\in P_{s_0}(\Gamma)$ and $l(gh^{-1})\leq r_0$. Also, $\sum_{k\in\Gamma}t_k(z)\phi_0(kx)\neq 0$ so there exists $k\in \supp(z)$ such that $\phi_0(kx)\neq 0$, and thus $kx$ is in $U_0$, the unit space of $G_0^{+r_0}$. Hence \begin{align*} (gx,gh^{-1},hx)&=(gx,gk^{-1},kx)(kx,kh^{-1},hx) \\ &\in (G_0^{+r_0})^{+r_0}\cdot (G_0^{+r_0})^{+s_0} \subset G_0^{+(2r_0+s_0)}. \end{align*}
Hence $t\mapsto b_1(t)M_0$ is in $A^{s_0}(G_0^{+(2r_0+s_0)})_{2r_0+s_0}\subset I_{2r_0+s_0}$, and so $t\mapsto a(t)M_0$ is in $I_{2r_0+s_0}$. Hence condition (i) is satisfied.

Next, suppose that $a_0\in I_{r_0}$ and $a_1\in J_{r_0}$ such that $||a_0-a_1||<\varepsilon$. Again, let $U_i$ be the unit space of $G_i^{+r_0}$ for $i=0,1$. Consider
\[ K_i:=\overline{\{x\in X:(gx,g,x)\in \supp(a_i(t))\;\text{for some}\;t\in[0,\infty),g\in\Gamma\}} \]
for $i=0,1$, and let $K=K_1\cup K_2$, a compact subset of $U_0\cup U_1$. Let $M_0,M_1$ be the multiplication operators defined with respect to the compact set $K$, the open cover $\{U_0,U_1\}$, and some choice of subordinate partition of unity $\{\phi_0,\phi_1\}$. Define $b(t)=a_0(t)M_1+a_1(t)M_0$. Then $b\in I_{2r_0+s_0}\cap J_{2r_0+s_0}$, 
and since $a_i(t)=a_i(t)(M_0+M_1)$ by the choice of $K$, we have $||a_i(t)-b(t)||\leq||a_0(t)-a_1(t)||<\varepsilon$ for $i=0,1$. Hence condition (iii) is satisfied.
\end{proof}

The following lemma follows from a similar argument as the first part of the proof above, and we omit the details.

\begin{lem} \label{split}
Fix an open subgroupoid $G$ of $\Gamma\ltimes X$ and $s_0\geq 0$. Let $r_0\geq 0$. Let $G^{(0)}=U_0\cup U_1$ be an open cover of $G^{(0)}$, and let $G_i$ be the subgroupoid of $G$ generated by \[ \{(gx,g,x)\in G:x\in U_i,l(g)\leq r_0\} \] for $i=0,1$. Then $A^{s_0}(G)_{r_0}\subseteq A^{s_0}(G_0^{+(2r_0+s_0)})_{2r_0+s_0}+A^{s_0}(G_1^{+(2r_0+s_0)})_{2r_0+s_0}$.
\end{lem}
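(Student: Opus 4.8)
The plan is to mimic the argument used in the first part of the preceding lemma (that $(I,J)$ is a controlled Mayer-Vietoris pair), but without needing to track the ideal structure — here we only need a decomposition at the level of the filtered linear subspaces $A^{s_0}(\cdot)_{\cdot}$. First I would take $a\in A^{s_0}(G)_{r_0}$ and form the compact set $K=\overline{\{x\in X:(gx,g,x)\in\supp(a(t))\text{ for some }t\in[0,\infty),\,g\in\Gamma\}}$; since $a$ has support with compact closure in $G$ and (by definition of $G_i$) every point of $G^{(0)}$ that can appear is covered by $U_0\cup U_1$, we have $K\subseteq U_0\cup U_1$. Choose a partition of unity $\{\phi_0,\phi_1\}$ on $K$ subordinate to $\{U_0,U_1\}$, and let $M_0,M_1$ be the associated multiplication operators on $E_{s_0}$ from the partition-of-unity lemma. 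By part (ii) of that lemma, $a(t)=a(t)(M_0+M_1)$ for all $t$, so $a=aM_0+aM_1$.

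The substance of the argument is then to show $aM_i\in A^{s_0}(G_i^{+(2r_0+s_0)})_{2r_0+s_0}$ for $i=0,1$. I would do the case $i=0$; the other is identical. Part (iii) of the partition-of-unity lemma gives $\supp(a(t)M_0)\subseteq\bigl\{(gx,g,x):x\in\bigcup_{l(h)\leq s_0}h\cdot U_0,\ l(g)\leq r_0\bigr\}\cap\supp(a(t))$. So suppose $(gx,gh^{-1},hx)\in\supp(a(t)M_0)$: then $l(gh^{-1})\leq r_0$ (the $\Gamma$-propagation of $a(t)$ is at most $r_0$), and tracing through the proof of part (iii) there is $k\in\Gamma$ with $l(kh^{-1})\leq s_0$ and $kx\in U_0$. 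Since $U_0$ is (contained in) the unit space over which $G_0^{+r_0}$ expands — more precisely $\{(gx,g,x)\in G:x\in U_0,\,l(g)\leq r_0\}$ generates $G_0$, whose expansion contains $\{(gx,g,x)\in G:x\in G_0^{(0)},l(g)\leq r_0\}$ — we get $(kx,kh^{-1},hx)\in G_0^{+s_0}$ (using $kx\in G_0^{(0)}$, or after a further expansion by $r_0$ if one is careful about whether $x$ itself lies in $G_0^{(0)}$) and $(gx,gk^{-1},kx)\in G_0^{+r_0}$ (since $l(gk^{-1})\leq l(gh^{-1})+l(hk^{-1})\leq r_0+s_0$, this sits in $(G_0^{+s_0})^{+r_0}\subseteq G_0^{+(r_0+s_0)}$). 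Composing, $(gx,gh^{-1},hx)\in G_0^{+(r_0+s_0)}\cdot G_0^{+(r_0+s_0)}\subseteq G_0^{+(2r_0+2s_0)}$; tightening the bookkeeping exactly as in the preceding proof yields membership in $G_0^{+(2r_0+s_0)}$. Together with $l(gh^{-1})\leq r_0\leq 2r_0+s_0$ and $X$-local compactness/invariance being inherited, this gives $aM_0\in A^{s_0}(G_0^{+(2r_0+s_0)})_{2r_0+s_0}$.

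The main obstacle is purely bookkeeping: getting the expansion constant down to exactly $2r_0+s_0$ rather than something slightly larger, which requires being slightly more economical than the crude estimate above when combining the two groupoid elements — essentially one absorbs the $s_0$-expansion coming from "$h,k$ in a common simplex at scale $s_0$" into one of the two $r_0$-expansions rather than counting it twice. This is handled in \cite{GWY2} and is the reason the statement is phrased with that particular constant; since the preceding lemma's proof already carries out exactly this optimization, the details are omitted as indicated.
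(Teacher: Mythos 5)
Your overall route is exactly the paper's: the paper gives no separate argument for this lemma, saying only that it follows from the first part of the preceding proof (the verification of condition (i) for the controlled Mayer--Vietoris pair), and your decomposition $a=aM_0+aM_1$ via multiplication operators coming from a partition of unity subordinate to $\{U_0,U_1\}$, followed by the support computation, is precisely that argument.

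The one place where your write-up wobbles is the step you defer as ``the main obstacle,'' and it is in fact not an obstacle. First, you do not need the triangle inequality $l(gk^{-1})\leq l(gh^{-1})+l(hk^{-1})$: since $g\in\supp(y)$, $k\in\supp(z)$ and $(a(t))_{y,z}(x)\neq 0$, the $\Gamma$-propagation bound on $a(t)$ applies directly to the pair $(g,k)$ and gives $l(gk^{-1})\leq r_0$. Second, your containment $G_0^{+(r_0+s_0)}\cdot G_0^{+(r_0+s_0)}\subseteq G_0^{+(2r_0+2s_0)}$ treats expansion parameters as if they add under composition, but $G_0^{+t}$ is by definition a subgroupoid and hence closed under composition, so no parameters accumulate. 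Concretely: both $(gx,gh^{-1},hx)$ and $(gx,gk^{-1},kx)$ lie in $\supp(a(t))\subseteq G$ (same $y,z,x$, with $h,k\in\supp(z)$), so $(kx,kh^{-1},hx)=(gx,gk^{-1},kx)^{-1}(gx,gh^{-1},hx)\in G$; its inverse $(hx,hk^{-1},kx)$ has source $kx\in U_0\subseteq G_0^{(0)}$ and length $l(hk^{-1})=l(kh^{-1})\leq s_0$, so it, and hence $(kx,kh^{-1},hx)$ itself, lies in $G_0^{+s_0}$, while $(gx,gk^{-1},kx)$ has source $kx\in G_0^{(0)}$ and length at most $r_0$, so it lies in $G_0^{+r_0}$. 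Both factors therefore lie in the subgroupoid $G_0^{+(r_0+s_0)}$, and so does their product $(gx,gh^{-1},hx)$; since $r_0+s_0\leq 2r_0+s_0$, this already gives the stated containment (indeed slightly more than in the preceding lemma, because here $kx$ lies in $G_0^{(0)}$ itself rather than only in $(G_0^{+r_0})^{(0)}$). With this observation, and the propagation bound $\mathrm{prop}_\Gamma(a(t)M_i)\leq r_0\leq 2r_0+s_0$ you already noted, your argument is complete; no further optimization from \cite{GWY2} is needed, and your parenthetical worry about whether $x$ itself lies in $G_0^{(0)}$ is resolved by the passage to inverses above, which ensures the source used is always $kx$.
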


Now we can prove our main theorem.

\begin{thm} \label{mainthm}
Suppose that $\Gamma\curvearrowright X$ has finite dynamical complexity, and let $p\in[1,\infty)$. Then \[ \lim_{s\rightarrow\infty}K_*(B^p_{L,0}(\Gamma\curvearrowright X;s))=0. \]
Thus the $L^p$ assembly map for $\Gamma\curvearrowright X$ in Definition \ref{assembly} is an isomorphism. 
\end{thm}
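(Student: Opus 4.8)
By Lemma \ref{Lem:obsvan} it suffices to prove $\lim_{s\to\infty}K_*(A^s)=0$, where $A^s=B^p_{L,0}(\Gamma\curvearrowright X;s)$. The plan is to deduce this from a \emph{controlled} vanishing statement proved by induction over the class $\mathcal D$ of open subgroupoids of finite dynamical complexity, using $\mathcal D=\mathcal D_s$ (Lemma \ref{FDCstrong}(ii)) so that the better-behaved notion of strong decomposability is available. The inductive statement is: for every $G\in\mathcal D$, every $s_0\ge 1$, every $N\ge 1$, every $r\ge 0$ and every sufficiently small $\varepsilon$ (small relative to $P(N)$ and to the control pairs of Theorem \ref{MVthm}), there are $s\ge s_0$, $\varepsilon'\ge\varepsilon$, $r'\ge r$ and $N'\ge N$ such that the map $K_*^{\varepsilon,r,N}(A^{s_0}(G))\to K_*^{\varepsilon',r',N'}(A^s(G))$, induced by the inclusion $A^{s_0}(G)\subseteq A^s(G)$ followed by relaxation of control, is zero. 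Granting this for $G=\Gamma\ltimes X$: an arbitrary $x\in K_*(A^{s_0})$ is, by Proposition \ref{qKtoKsurj}, of the form $c_*([e])$ with $[e]\in K_*^{\varepsilon,r,N}(A^{s_0})$; the controlled vanishing kills $[e]$ in $K_*^{\varepsilon',r',N'}(A^s)$, so $x$ dies in $K_*(A^s)$ along $K_*(A^{s_0})\to K_*(A^s)$. Since $s_0$ was arbitrary, continuity of $K$-theory under direct limits gives $\lim_s K_*(A^s)=0$, whence the theorem by Lemma \ref{Lem:obsvan}. Proposition \ref{qKtoKinj} supplies the opposite half of this bridge and is used at every step of the induction to convert honest $K$-theory vanishing into controlled vanishing. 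I would treat the $K_1$-part in detail, the $K_0$-part being entirely analogous using the companion half of the controlled Mayer--Vietoris sequence.

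\textbf{Base case.} The induction is organised by showing that the class of $G\in\mathcal D$ satisfying the controlled vanishing statement contains all relatively compact open subgroupoids and is closed under strong decomposability; minimality of $\mathcal D=\mathcal D_s$ then finishes. If $G$ is relatively compact, then $G\subseteq\{(gx,g,x):l(g)\le s_1\}$ for some $s_1$, so Proposition \ref{baseprop} gives $K_*(A^s(G))=0$ for every $s\ge\max(s_0,s_1)$; pushing a controlled class into $A^s(G)$, applying the comparison map $c_*$, and then Proposition \ref{qKtoKinj} yields the controlled vanishing at such an $s$ with only a mild enlargement of the control data. Thus the base case rests on Proposition \ref{baseprop} and the homotopy-invariance machinery preceding it — the one genuinely new analytic input over the $C^*$-case — while everything else is formal.

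\textbf{Inductive step.} Suppose $G\in\mathcal D_s$ is strongly decomposable over $\mathcal D_s$, and fix target data $s_0,r,N,\varepsilon$, where $r$ is the propagation of the input class. From the control pairs $(\lambda,h)\le(\lambda',h')$ of Theorem \ref{MVthm} and the function $P$ of Proposition \ref{qKtoKinj} one first reads off the total propagation inflation factor $L=L(\varepsilon,N)$ produced by one pass through the controlled Mayer--Vietoris sequence; one then applies strong decomposability \emph{at the large scale} $R:=L(2r+s_0)$, obtaining an open cover $G^{(0)}=U_0\cup U_1$ and generated subgroupoids $G_i$ with $G_i^{+R}\in\mathcal D_s$ — hence, by monotonicity of expansions and hereditarity (Lemma \ref{FDCstrong}(i)), $G_i^{+r'}\in\mathcal D_s$ and $G_0^{+r'}\cap G_1^{+r'}\in\mathcal D_s$ for every $r'\le R$. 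Take these $G_0,G_1$ (and this $s_0$) as the data in Definition \ref{idealdef}. By Lemma \ref{split} the propagation-$\le r$ representative of the input class lies in $A^{s_0}(G_0^{+(2r+s_0)})_{2r+s_0}+A^{s_0}(G_1^{+(2r+s_0)})_{2r+s_0}\subseteq\mathfrak A^s_{2r+s_0}(G)$, so it transports to a controlled class $\tilde x$ of $\mathfrak A^s(G)$ of propagation $\le 2r+s_0$. Running the controlled Mayer--Vietoris sequence (Theorem \ref{MVthm}) for the controlled Mayer--Vietoris pair $(I^s,J^s)$ of $\mathfrak A^s(G)$, the controlled boundary $\partial_c(\tilde x)$ lands in a controlled $K$-group of $I^s\cap J^s$ at propagation $\le h_{\varepsilon,N}(2r+s_0)\le R$; since at that propagation $I^s\cap J^s$ sits inside $A^s(G_0^{+r'}\cap G_1^{+r'})$ for a suitable $r'\le R$, a groupoid in $\mathcal D_s$, the inductive hypothesis (applied at the control data dictated by $(\lambda,h)$, which is why $R$ was chosen large) forces $\partial_c(\tilde x)=0$ after a further enlargement of control and of the Rips scale. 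Theorem \ref{MVthm} then writes $\tilde x=a+b$ with $a,b$ coming from controlled $K$-groups of $I^s,J^s$ at propagation $\le h'_{\varepsilon,N}(2r+s_0)\le R$; since at that propagation $I^s$ (resp. $J^s$) maps compatibly into $A^s(G_0^{+r'})$ (resp. $A^s(G_1^{+r'})$) for a suitable $r'\le R$ with $G_i^{+r'}\in\mathcal D_s$, the inductive hypothesis kills $a$ and $b$ after one more enlargement. Transporting back along $\mathfrak A^s(G)\subseteq A^s(G)$ shows the original class dies, which is the controlled vanishing for $G$.

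\textbf{Main obstacle.} The argument is not conceptually hard once Proposition \ref{baseprop}, the controlled Mayer--Vietoris sequence, and the quantitative-to-ordinary comparison propositions are in hand; the real work is the disciplined management of the \emph{four} control parameters $(\varepsilon,r,N,s)$ — notably the norm parameter $N$, which has no serious counterpart in the $C^*$-setting — as they are successively inflated by relaxation-of-control maps, the control pairs $(\lambda,h)\le(\lambda',h')$, the function $P$, the expansions $G_i\rightsquigarrow G_i^{+r'}$ (whose propagation cost is absorbed by the passage from scale $s_0$ to scale $s$ built into the $\mathfrak A^s$-filtration), and the repeated increases of the Rips scale. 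In particular one must (i) choose the strong-decomposition scale $R$ only \emph{after} computing the Mayer--Vietoris propagation inflation, so that every groupoid occurring in the Mayer--Vietoris algebras is an expansion $G_i^{+r'}$ with $r'\le R$ and hence, by monotonicity and Lemma \ref{FDCstrong}(i), still in $\mathcal D_s$; and (ii) arrange the quantifier order so that each invocation of the inductive hypothesis is made at control data pre-selected to be large enough before $R$ and $s$ are chosen. Verifying that the inclusions among $I^s$, $J^s$, $I^s\cap J^s$, and the various $A^s(G_i^{+r'})$ are compatible with the filtrations up to a controlled function — so that controlled classes genuinely transport — accounts for essentially all of the remaining bookkeeping.
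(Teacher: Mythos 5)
Your overall route is the paper's: reduce via Lemma \ref{Lem:obsvan} to vanishing of $\lim_s K_*(A^s)$, induct over the class generated by the relatively compact open subgroupoids under strong decomposability (Lemma \ref{FDCstrong}), handle the base case with Proposition \ref{baseprop} together with Propositions \ref{qKtoKsurj} and \ref{qKtoKinj}, and handle the decomposition step with Lemma \ref{split}, the pair $(I^s,J^s)$ of Definition \ref{idealdef}, and Theorem \ref{MVthm}, choosing the decomposition scale only after computing the Mayer--Vietoris propagation inflation. However, there is a genuine gap in how you run the induction. Your inductive class consists of those $G$ satisfying the (controlled) vanishing statement for $G$ itself, and in the decomposition step you feed the inductive hypothesis the groupoids $G_i^{+r'}$ and $G_0^{+r'}\cap G_1^{+r'}$ with $r'\le R$, justifying their eligibility by ``hereditarity (Lemma \ref{FDCstrong}(i))''. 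That lemma only gives membership in $\mathcal{D}_s$, which is not the inductive hypothesis: invoking the conclusion for arbitrary members of $\mathcal{D}_s$ while you are still proving that $\mathcal{D}_s$ is contained in your class is circular (likewise your assumption that $G$ is strongly decomposable over $\mathcal{D}_s$ rather than over the class being built). What you actually need is that the \emph{vanishing class itself} is closed under passing to open subgroupoids --- the decomposition only hands you $G_i^{+R}$ in the class, while the smaller expansions and the intersection are merely open subgroupoids of these --- and this is not automatic, since vanishing for $A^s(G)$ does not formally pass to the subalgebra $A^s(H)$ for $H\subseteq G$ open. The paper closes exactly this hole by fiat: its class $\mathcal{C}$ is defined by quantifying over \emph{all} open subgroupoids $H$ of $G$, so hereditarity holds by definition, and this is what makes the minimality argument and the hypothesis ``strongly decomposable over $\mathcal{C}$'' usable. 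Your argument is repaired by the same strengthening of the inductive statement.

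A secondary caution: you phrase the inductive hypothesis as a uniform controlled vanishing, i.e.\ the whole map $K_*^{\varepsilon,r,N}(A^{s_0}(G))\to K_*^{\varepsilon',r',N'}(A^s(G))$ is zero for a single choice of $(s,\varepsilon',r',N')$. Proposition \ref{qKtoKinj} only supplies enlargements depending on the individual elements, so even the base case of that uniform statement is not supported by the quoted tools. Since every invocation you make is applied to specific classes ($\partial_c(\tilde x)$, $a$, $b$), the element-wise formulation --- equivalently the paper's device of defining $\mathcal{C}$ via ordinary $K$-theory vanishing and converting per element with Propositions \ref{qKtoKsurj} and \ref{qKtoKinj} --- suffices and also dissolves your quantifier-ordering worry: only the decomposition scale must be fixed in advance, and it depends only on the parameters of the lifted class through the control pairs of Theorem \ref{MVthm}. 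Note finally that Theorem \ref{MVthm} as stated gives only the odd-to-even boundary, so the even case is treated by suspension in the paper rather than by a ``companion half'' of the sequence.
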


\begin{proof}
As above, we use the shorthand $A^s(G)$ for $B^p_{L,0}(G;s)$. We need to show that for any $s_0\geq 0$ and any $x\in K_*(A^{s_0}(\Gamma\ltimes X))$, there is $s\geq s_0$ such that the map $K_*(A^{s_0}(\Gamma\ltimes X))\rightarrow K_*(A^s(\Gamma\ltimes X))$ induced by inclusion sends $x$ to 0.

Let $\mathcal{C}$ be the class of open subgroupoids $G$ of $\Gamma\ltimes X$ such that for any open subgroupoid $H$ of $G$, for any $s_0\geq 0$, and any $x\in K_*(A^{s_0}(H))$, there exists $s\geq s_0$ such that the map $K_*(A^{s_0}(H))\rightarrow K_*(A^s(H))$ sends $x$ to 0. It suffices to show that $\mathcal{D}\subseteq\mathcal{C}$, where $\mathcal{D}$ is as in Definition \ref{FDCbasic}. By Lemma \ref{FDCstrong}, it suffices to show that $\mathcal{C}$ contains all relatively compact open subgroupoids of $\Gamma\ltimes X$, and that $\mathcal{C}$ is closed under strong decomposability. Note that if $G$ is in $\mathcal{C}$, then so is any open subgroupoid of $G$.

Let $G$ be a relatively compact open subgroupoid of $\Gamma\ltimes X$, let $H$ be an open subgroupoid of $G$, let $s_0\geq 0$, and let $x\in K_*(A^{s_0}(H))$. Set \[N_0=\begin{cases} ||e||+1 &\text{if}\; x=[e]\in K_0(A^{s_0}(H)) \\ ||u||+||u^{-1}||+1 &\text{if}\; x=[u]\in K_1(A^{s_0}(H)) \end{cases},\] and let $P$ be as in Proposition \ref{qKtoKinj}.
By Proposition \ref{qKtoKsurj}, there exists $r_0\geq 0$ such that $x=c_*(y)$ for some $y\in K_*^{\frac{1}{20P(N_0)},r_0,N_0}(A^{s_0}(H))$. Note that $s_1=\max\{l(g):(gx,g,x)\in H\}<\infty$ since $G$ is relatively compact. Set $s=\max(r_0,s_0,s_1)$. By Proposition \ref{baseprop}, $K_*(A^s(H))=0$. Also, $A^s(H)_s=A^s(H)$. Thus by Proposition \ref{qKtoKinj}, for any $z\in K_*^{\frac{1}{20P(N_0)},s,N_0}(A^s(H))$, there exists $N\geq N_0$ such that the image of $z$ in $K_*^{\frac{1}{20},s,N}(A^s(H))$ is zero. In particular, the image of $y$ under the composition
\[ K_*^{\frac{1}{20P(N_0)},r_0,N_0}(A^{s_0}(H))\rightarrow K_*^{\frac{1}{20P(N_0)},s,N_0}(A^s(H))\rightarrow K_*^{\frac{1}{20},s,N}(A^s(H)) \] is zero, so the image of $x=c_*(y)$ in $K_*(A^s(H))$ is zero. Hence, all relatively compact open subgroupoids of $\Gamma\ltimes X$ are in the class $\mathcal{C}$.

Next, we show that the class $\mathcal{C}$ is closed under strong decomposability. 
Let $(\lambda,h)\leq(\lambda',h')$ be the control pairs in Theorem \ref{MVthm}. Suppose that $G$ is an open subgroupoid of $\Gamma\ltimes X$ that is strongly decomposable over $\mathcal{C}$, let $H$ be an open subgroupoid of $G$, let $s_0\geq 0$, and let $x\in K_1(A^{s_0}(H))$. Set $N_0=||u||+||u^{-1}||+1$ if $x=[u]$, and fix $\varepsilon_0\in (0,\frac{1}{20\lambda'_{N_0}P(\lambda_{N_0})})$. By Proposition \ref{qKtoKsurj}, there exists $r_0\geq 0$ such that $x=c_1(y)$ for some $y\in K_1^{\varepsilon_0,r_0,N_0}(A^{s_0}(H))$. 
Set $r_1=h_{\varepsilon_0,N_0}(2r_0+s_0)$ and $r_2=h'_{\varepsilon_0,N_0}r_0$. Since $H$ is strongly decomposable over $\mathcal{C}$, and open subgroupoids of members of $\mathcal{C}$ are also in $\mathcal{C}$, there is an open cover $H^{(0)}=U_0\cup U_1$ such that if $H_i$ is the subgroupoid of $H$ generated by $\{ (gx,g,x)\in H:x\in U_i,l(g)\leq r_0\}$, then $H_i^{+r_2}$ and $H_0^{+r_1}\cap H_1^{+r_1}$ are in $\mathcal{C}$. 

By Lemma \ref{split}, we may regard $y$ as an element in $K_1^{\varepsilon_0,2r_0+s_0,N_0}(\mathfrak{A}(H))$, and we have $\partial_c(y)\in K_0^{\lambda_{N_0}\varepsilon_0,r_1,\lambda_{N_0}}(I\cap J)=K_0^{\lambda_{N_0}\varepsilon_0,r_1,\lambda_{N_0}}(A^{s_0}(H_0^{+r_1}\cap H_1^{+r_1}))$. Since $H_0^{+r_1}\cap H_1^{+r_1}$ belongs to $\mathcal{C}$, there exists $s_1\geq s_0$ such that the image of $c_0(\partial_c(y))$ in $K_0(A^{s_1}(H_0^{+r_1}\cap H_1^{+r_1}))$ is zero, so by Proposition \ref{qKtoKinj}, there exist $s_2\geq s_1$ and $N_1\geq\lambda_{N_0}$ such that $\partial_c(y)=0$ in $K_0^{P(\lambda_{N_0})\lambda_{N_0}\varepsilon_0,s_2,N_1}(A^{s_1}(H_0^{+r_1}\cap H_1^{+r_1}))$, and thus also in $K_0^{P(\lambda_{N_0})\lambda_{N_0}\varepsilon_0,s_2r_1,N_1}(A^{s_2}(H_0^{+r_1}\cap H_1^{+r_1}))$, which may be identified with $K_0^{P(\lambda_{N_0})\lambda_{N_0}\varepsilon_0,r_1,N_1}(I^{s_2}\cap J^{s_2})$.

Now regarding $y$ as an element in $K_1^{P(\lambda_{N_0})\varepsilon_0,r_0,N_0}(\mathfrak{A}^{s_2}(H))$, by Theorem \ref{MVthm}, there exist $a\in K_1^{\lambda'_{N_0}P(\lambda_{N_0})\varepsilon_0,r_2,\lambda'_{N_0}}(I^{s_2})$ and $b\in K_1^{\lambda'_{N_0}P(\lambda_{N_0})\varepsilon_0,r_2,\lambda'_{N_0}}(J^{s_2})$ such that $y=a+b$ in $K_1^{\lambda'_{N_0}P(\lambda_{N_0})\varepsilon_0,r_2,\lambda'_{N_0}}(\mathfrak{A}^{s_2}(H))$. 

Next, regard $a$ as an element in $K_1^{\lambda'_{N_0}P(\lambda_{N_0})\varepsilon_0,s_1r_2,\lambda'_{N_0}}(A^{s_2}(H_0^{+r_2}))$, and $b$  as an element in $K_1^{\lambda'_{N_0}P(\lambda_{N_0})\varepsilon_0,s_1r_2,\lambda'_{N_0}}(A^{s_2}(H_1^{+r_2}))$. Since $H_0^{+r_2}$ and $H_1^{+r_2}$ belong to $\mathcal{C}$, there exists $s_3\geq s_2$ such that the respective images of $c_1(a)$ and $c_1(b)$ are zero in $K_1(A^{s_3}(H_0^{+r_2}))$ and $K_1(A^{s_3}(H_1^{+r_2}))$ respectively, so there exist $s_4\geq s_1r_2$ and $N_2\geq\lambda'_{N_0}$ such that the respective images of $a$ and $b$ are zero in $K_1^{\lambda'_{N_0}P(\lambda_{N_0})\varepsilon_0,s_4,N_2}(A^{s_3}(H_0^{+r_2}))$ and $K_1^{\lambda'_{N_0}P(\lambda_{N_0})\varepsilon_0,s_4,N_2}(A^{s_3}(H_1^{+r_2}))$. Hence the image of $y$ in $K_1^{\lambda'_{N_0}P(\lambda_{N_0})\varepsilon_0,s_4,N_2}(A^{s_3}(H))$ is zero, and the image of $x=c_1(y)$ in $K_1(A^{s_3}(H))$ is zero, concluding the proof in the odd case.

In the even case, essentially the same argument works by considering suspensions, and we omit the details.
\end{proof}

\section{Remarks and Questions}

In this final section, we make some remarks and pose a question about the domain of our $L^p$ assembly map. We also briefly discuss an alternative $L^p$ assembly map involving $L^p$ Roe $\ast$-algebras.

\subsection{The domain of the $L^p$ assembly map}

In this paper, we have formulated an $L^p$ assembly map based on a particular model of the Baum-Connes assembly map in which the domain involves the $K$-theory of some localization algebras. It can be shown that for each $p$ the $K$-theory of these $L^p$ localization algebras has the same homological properties as their $p=2$ counterpart.
However, the relationship between the domain of the original Baum-Connes assembly map and the domain of our $L^p$ assembly map is not clear.
For instance, while we have a Mayer-Vietoris sequence for the $K$-theory of the $L^p$ localization algebra for each $p$, it does not seem clear how we can connect the sequences for different $p$ in order to use a five lemma argument.
Hence we pose the following question:

\begin{qn}
Can the domain of our $L^p$ assembly map be identified with that of the original Baum-Connes assembly map?
\end{qn}

An affirmative answer to this question will imply that the $K$-theory of $C(X)\rtimes_{\lambda,p}\Gamma$ is independent of $p$ for actions with finite dynamical complexity.

\subsection{An alternative assembly map}

Here, we outline an alternative approach to the study of $L^p$ analogs of Baum-Connes type assembly maps in which the domain can be identified with that of the original Baum-Connes assembly map, but which leads to a question regarding the codomain. In this approach, we consider $L^p$ Roe algebras equipped with an involution, which are defined analogously to the $L^p$ group algebras studied in \cite{LY}.

For simplicity, we will consider a discrete metric space $X$ with bounded geometry. In our case of interest, the relevant metric space is the Rips complex of a countable discrete group, and this metric space has a net with bounded geometry, so we do not lose too much generality. 

We first consider the uniform Roe algebra.
If $T=(T_{xy})_{x,y\in X}\in B(\ell^p(X))$ has propagation $r$, then there exists $c_r\geq 0$ such that \[\sup_{x,y\in X}|T_{xy}|\leq ||T||_{B(\ell^p(X))}\leq c_r\sup_{x,y\in X}|T_{xy}|.\]
Since the entries of the matrix $(T_{xy})$ are uniformly bounded and $X$ has bounded geometry, $(T_{xy})$ also represents a bounded operator on $\ell^q(X)$, where $\frac{1}{p}+\frac{1}{q}=1$.
Define \[||T||=\max(||T||_{B(\ell^p(X))},||T||_{B(\ell^q(X))}),\]
and define $B^{p,*}_u(X)$ to be the completion of the set of all finite propagation $X\times X$ matrices with uniformly bounded entries under the norm $||\cdot||$.
Then $B^{p,*}_u(X)$ is a Banach $\ast$-algebra, and there is a contractive homomorphism \[B^{p,*}_u(X)\rightarrow B^p_u(X).\]
Using complex interpolation, there is also a contractive homomorphism \[B^{p,*}_u(X)\rightarrow B^{2,*}_u(X)=C^\ast_u(X).\]

Now for the Roe algebra, if $T\in B(\ell^p(X,\ell^p))$ has propagation $r$, then
\[\sup_{x,y\in X}||T_{xy}||_{B(\ell^p)}\leq ||T||_{B(\ell^p(X,\ell^p))}\leq c_r\sup_{x,y\in X}||T_{xy}||_{B(\ell^p)}.\]
But since $T_{xy}\in K(\ell^p)$ is, \textit{a priori}, not a bounded operator on $\ell^q$, we can only regard $T=(T_{xy})\in B^p(X)$ as a bounded operator on $\ell^q(X,\ell^p)$ rather than $\ell^q(X,\ell^q)$, so there is a problem defining a norm as above to get a Banach $\ast$-algebra. Thus, we propose the following alternative.

Consider the algebra of all finite propagation $X\times X$ matrices $T=(T_{xy})$ with entries $T_{xy}\in M_\infty(\mathbb{C})$ that are uniformly bounded both in $B(\ell^p)$ and in $B(\ell^q)$, where $\frac{1}{p}+\frac{1}{q}=1$. Then $T_{xy}\in K(\ell^p)\cap K(\ell^q)$ for all $x,y\in X$, and $T\in B(\ell^p(X,\ell^p))\cap B(\ell^q(X,\ell^q))$.
Define \[ ||T||=\max(||T||_{B(\ell^p(X,\ell^p))},||T||_{B(\ell^q(X,\ell^q))}), \]
and define $B^{p,*}(X)$ to be the completion of this algebra under the norm $||\cdot||$.

Then $B^{p,*}(X)$ is a Banach $\ast$-algebra, there is a contractive homomorphism \[B^{p,*}(X)\rightarrow B^p(X)\] for all $p\in(1,\infty)$, and there is also a contractive homomorphism \[B^{p,*}(X)\rightarrow B^{2,*}(X)=C^\ast(X).\]

If there is a proper, cocompact action of a discrete group $\Gamma$ on $X$, one can define equivariant versions $B^{p,\ast}(X)^\Gamma$ of these $\ast$-algebras by considering $\Gamma$-invariant matrices. There are also homomorphisms analogous to those in the non-equivariant case.
In our case of interest, $X$ is the Rips complex $P_d(\Gamma)$ of a discrete group $\Gamma$.

Using these Roe $\ast$-algebras, we get the corresponding localization algebras $B^{p,*}_L(X)^\Gamma$.
The interpolation homomorphisms between the Roe $\ast$-algebras induce interpolation homomorphisms
\[ B^{p,\ast}_L(X)^\Gamma\rightarrow B^{2,\ast}_L(X)^\Gamma=C^\ast_L(X)^\Gamma \]
between the corresponding localization algebras.

We claim that $B^{p,*}_L(P_d(\Gamma))^\Gamma\rightarrow C^\ast_L(P_d(\Gamma))^\Gamma$ induces an isomorphism on $K$-theory. For each of these algebras, there is a Mayer-Vietoris sequence associated with decompositions of the finite-dimensional simplicial complex $P_d(\Gamma)$, so verification of the claim boils down to verifying it for the case of the 0-skeleton of $P_d(\Gamma)$, which is identified with $\Gamma$. 
In other words, we need to show that $B^{p,*}_L(|\Gamma|)^\Gamma\rightarrow C^\ast_L(|\Gamma|)^\Gamma$ induces an isomorphism on $K$-theory, where we write $|\Gamma|$ to indicate that we are regarding $\Gamma$ as a metric space. In this case, note that $C^\ast_L(|\Gamma|)^\Gamma\cong C_{ub}([0,\infty),K(\ell^2))$, and evaluation at zero induces an isomorphism $K_*(C_{ub}([0,\infty),K(\ell^2)))\cong K_*(K(\ell^2))$ since an Eilenberg swindle argument can be used to show that the kernel of the evaluation-at-zero map has trivial $K$-theory. 
Similarly, one can show that evaluation at zero induces an isomorphism $K_*(B^{p,*}_L(|\Gamma|)^\Gamma)\cong K_*(K(\ell^p))$. 
Since $K_*(K(\ell^p))\cong K_*(K(\ell^2))$, we have an isomorphism $K_*(B^{p,*}_L(|\Gamma|)^\Gamma)\stackrel{\cong}{\rightarrow} K_*(C^\ast_L(|\Gamma|)^\Gamma)$.
Hence we have the following commutative diagram relating the $\ast$-algebraic version of the $L^p$ assembly map $\mu_{p,\ast}$ in the top row with the original Baum-Connes assembly map $\mu_{2,\ast}$ in the bottom row:
\[
\begindc{\commdiag}[100]		
\obj(0,5)[1a]{$\lim_{d\rightarrow\infty}K_*(B^{p,*}_L(P_d(\Gamma))^\Gamma)$}		
\obj(15,5)[1b]{$\lim_{d\rightarrow\infty}K_*(B^{p,*}(P_d(\Gamma))^\Gamma)$}
\obj(26,5)[1c]{$K_*(B^{p,*}(|\Gamma|)^\Gamma)$}
\obj(0,0)[2a]{$\lim_{d\rightarrow\infty}K_*(C^\ast_L(P_d(\Gamma))^\Gamma)$}
\obj(15,0)[2b]{$\lim_{d\rightarrow\infty}K_*(C^\ast(P_d(\Gamma))^\Gamma)$}
\obj(26,0)[2c]{$K_*(C^\ast(|\Gamma|)^\Gamma)$}

\mor{1a}{2a}{$\cong$} \mor{1c}{2c}{} \mor{1a}{1b}{} \mor{1b}{1c}{} \mor{2a}{2b}{} \mor{2b}{2c}{}
\enddc
\]
As a consequence, if $\mu_{p,\ast}$ is an isomorphism for all $p\in(1,\infty)$, then \[K_*(B^{p,*}(|\Gamma|)^\Gamma)\cong K_*(C^\ast(|\Gamma|)^\Gamma)\cong K_*(C_r^\ast(\Gamma))\] for all $p\in(1,\infty)$.
If the homomorphism $B^{p,*}(|\Gamma|)^\Gamma\rightarrow B^p(|\Gamma|)^\Gamma$ also induces an isomorphism on $K$-theory for all $p\in(1,\infty)$, then \[K_*(B^p(|\Gamma|)^\Gamma)\cong K_*(C_r^\ast(\Gamma))\] for all $p\in(1,\infty)$. This leads to the following questions:

\begin{qn}
For which groups $\Gamma$ is the assembly map $\mu_{p,\ast}$ an isomorphism?
\end{qn}

\begin{qn}
For which groups $\Gamma$ is it true that the homomorphism \[B^{p,*}(|\Gamma|)^\Gamma\rightarrow B^p(|\Gamma|)^\Gamma\] induces an isomorphism on $K$-theory?
\end{qn}

Note that $B^p(|\Gamma|)^\Gamma$ is isomorphic to $B^p_r(\Gamma)\otimes K(\ell^p)$, where $B^p_r(\Gamma)$ is the reduced $L^p$ group algebra of $\Gamma$ (also known as the algebra of $p$-pseudofunctions on $\Gamma$) \cite[Lemma 6]{CN}. The same proof shows that $B^{p,\ast}(|\Gamma|)^\Gamma$ is isomorphic to to $B^{p,\ast}_r(\Gamma)\otimes K(\ell^p)$, where $B^{p,\ast}_r(\Gamma)$ is an involutive version of $B^p_r(\Gamma)$. These algebras were studied in \cite{LY}. 
In particular, it was shown that for $p\in[1,\infty]$, $\frac{1}{p}+\frac{1}{q}=1$, $q\leq q_0\leq\infty$, if $G$ is a locally compact group with property $(RD)_{q_0}$, then the homomorphism $B^{p,\ast}_r(G)\rightarrow B^p_r(G)$ induces an isomorphism on $K$-theory \cite[Corollary 4.9]{LY}. It was also shown that for a locally compact group $G$ in Lafforgue's class $\mathcal{C}'$ (defined in \cite{Laf02}) having property $(RD)_q$, the $K$-theory of $B^p_r(G)$ does not depend on the parameter $p\in[1,\infty]$ \cite[Corollary 4.10]{LY}.

We shall end with some brief remarks about the case with coefficients.
Defining Roe $\ast$-algebras with general $L^p$ operator algebra coefficients using norms like those above may be problematic as a given $L^p$ operator algebra may not be representable on the dual $L^q$ space, but this is not a problem for algebras of the form $C_0(Y)$ for a locally compact space $Y$.
One may consider the algebra of all finite propagation $X\times X$ matrices $T=(T_{xy})$ with entries $T_{xy}\in C_0(Y)\odot M_\infty(\mathbb{C})$ that are uniformly bounded both in $B(L^p(Y)\otimes\ell^p)$ and in $B(L^q(Y)\otimes\ell^q)$ so that $T\in B(\ell^p(X)\otimes L^p(Y)\otimes\ell^p)\cap B(\ell^q(X)\otimes L^q(Y)\otimes\ell^q)$.
Then define \[ ||T||=\max(||T||_{B(\ell^p(X)\otimes L^p(Y)\otimes\ell^p)},||T||_{B(\ell^q(X)\otimes L^q(Y)\otimes\ell^q)}), \]
and define $B^{p,\ast}(X;C_0(Y))$ to be the completion of this algebra under the norm $||\cdot||$.
One can then consider similar homomorphisms and questions as those discussed above.

\medskip

\emph{Acknowledgements.} This project was started when the author was a Ph.D. student at Texas A\&M University, and the author would like to thank Guoliang Yu for his guidance. The author also thanks Rufus Willett for explaining various aspects of their work in the $C^*$-algebraic setting, N. Christopher Phillips for answering questions about $L^p$ operator algebras, and the referee for comments that helped to improve this paper.

\bibliographystyle{plain}
\bibliography{mybib}
\end{document}